\documentclass[twoside]{article}

%
\usepackage[accepted]{aistats2024}
%


\usepackage[round]{natbib}

\bibliographystyle{apalike}

\usepackage{amsmath}
\usepackage{amssymb}
\usepackage{mathtools}
\usepackage{amsthm}
\usepackage{dsfont}
\usepackage{xcolor}         
\usepackage{hyperref}
\usepackage{algorithm}
\usepackage{algorithmic}
\usepackage{pgfplots}
\usepackage{subcaption}
\usepackage{float}
\theoremstyle{plain}
\newtheorem{theorem}{Theorem}[section]
\newtheorem*{theorem*}{Theorem}
\newtheorem{proposition}[theorem]{Proposition}
\newtheorem*{proposition*}{Proposition}
\newtheorem{lemma}[theorem]{Lemma}
\newtheorem*{lemma*}{Lemma}

\newtheorem{remark}[theorem]{Remark}
\theoremstyle{definition}

\definecolor{forestgreen}{rgb}{0.13, 0.55, 0.13}
\DeclareMathOperator*{\argmax}{arg\,max}
\DeclareMathOperator*{\argmin}{arg\,min}

\renewcommand{\epsilon}{\varepsilon}

\hypersetup{
  linkcolor  = blue!70!black,
  citecolor  = blue!70!black,
  urlcolor   = blue!70!black,
  colorlinks = true,
}

\pgfplotsset{compat=1.17}
\begin{document}

%

%

\twocolumn[

\aistatstitle{Efficient Model-Based Concave Utility Reinforcement Learning through Greedy Mirror Descent}

\aistatsauthor{ Bianca Marin Moreno \And Margaux Brégère \And Pierre Gaillard \And Nadia Oudjane }

\aistatsaddress{ Inria THOTH \\ EDF R$\&$D \And Sorbonne Université \\ EDF R$\&$D\And Inria THOTH \And EDF R$\&$D} ]

\begin{abstract}

Many machine learning tasks can be solved by minimizing a convex function of an occupancy measure over the policies that generate them. These include reinforcement learning, imitation learning, among others. This more general paradigm is called the Concave Utility Reinforcement Learning problem (CURL). Since CURL invalidates classical Bellman equations, it requires new algorithms. We introduce MD-CURL, a new algorithm for CURL in a finite horizon Markov decision process. MD-CURL is inspired by mirror descent and uses a non-standard regularization to achieve convergence guarantees and a simple closed-form solution, eliminating the need for computationally expensive projection steps typically found in mirror descent approaches. We then extend CURL to an online learning scenario and present Greedy MD-CURL, a new method adapting MD-CURL to an online, episode-based setting with partially unknown dynamics. Like MD-CURL, the online version Greedy MD-CURL benefits from low computational complexity, while guaranteeing sub-linear or even logarithmic regret, depending on the level of information available on the underlying dynamics.
  
\end{abstract}

\section{Introduction}\label{introduction}
We consider the concave utility reinforcement learning (CURL) problem, which consists on minimizing a convex function (or maximising a concave one) over state-action distributions induced by an agent's policy:
\begin{equation}\label{main_problem}
    \min_{\pi \in (\Delta_\mathcal{A})^{ \mathcal{X} \times N}} \bigg\{F(\mu^{\pi,p}) := \sum_{n=1}^N f_n(\mu_n^{\pi,p})\bigg\}.
\end{equation}
Here, we consider an episodic Markov decision process (MDP) with finite state space $\mathcal{X}$, finite action space $\mathcal{A}$, episodes of length $N$, and probability transition kernel $p := (p_n)_{n \in [N]}$ such that $p_n : \mathcal{X} \times \mathcal{A} \times \mathcal{X} \rightarrow [0,1]$. For all $s \in \mathbb{N}$ we denote $[s] := \{1,\ldots,s\}$.  Letting $\Delta_\mathcal{S}$ be the simplex over a finite set $\mathcal{S}$, we denote $\mu^{\pi,p} := (\mu_n)_{0 \leq n \leq N} \in (\Delta_{\mathcal{X} \times \mathcal{A}})^N$ the state-action distributions over an episode induced by the policy $\pi$ in the MDP with dynamics $p$. 

Many machine learning tasks are special cases of Problem~\eqref{main_problem}. For instance, for the \textbf{reinforcement learning} (RL) task \citep{RL}, $F(\mu^{\pi,p}) := -\langle \mu^{\pi,p}, r \rangle$, i.e. the inner product between the state-action distribution induced by $\pi$ and a reward $r$. For the \textbf{imitation learning} problem \citep{imitation_learning}, $F(\mu^{\pi,p}) := D_f(\mu^{\pi,p}, \mu^*)$, where $D_f$ represents a Bregman divergence induced by a function $f$. For some instances of the \textbf{mean field control} (MFC) problem \citep{MFC}, $F(\mu^{\pi,p}) := -\langle \mu^{\pi,p}, r(\mu^{\pi,p}) \rangle$, where the reward function also depends on the agents' state-action distribution. For \textbf{mean field game} (MFG) problems having the gradient of $F$ as reward, finding a Nash Equilibrium amounts to solving Problem~\eqref{main_problem} \citep{perrolat_curl, pfeiffer}.

\paragraph{Contribution $1$:} We present a new iterative algorithm focusing on solving Problem~\eqref{main_problem} called MD-CURL. It is inspired by the mirror descent algorithm~\citep{MD}, and we prove a convergence rate of order $1/\sqrt{K}$ where $K$ is the number of iterations. Our main new ingredient is the use of a non-standard regularization, which enables us to find both a simple closed-form solution - meaning we avoid the generally costly projection step that mirror descent algorithms undergo - and a convergence proof.  

Until now, there have been few algorithms for solving the general framework of Problem~\eqref{main_problem}. The first two approaches were proposed, by \cite{Hazan_curl} on the basis of the Frank-Wolfe algorithm \citep{Frank-Wolfe}, and \cite{csaba_curl} on the basis of policy gradient methods, both of which have theoretical guarantees. In the mean field community, \cite{perrolat_curl} prove that all algorithms for solving MFGs in discrete-time RL can be applied for solving CURL. \cite{MFG_survey} survey existing algorithms and perform numerical experiments, showing that the adaptation of online mirror descent (OMD) for MFGs presented by \cite{OMD_Perrolat} has the best performance, outperforming the previously mentioned approaches. However, this method has no proof of convergence for discrete iterations. Our proposed algorithm, as we demonstrate with showcase experiments, has the same performance as OMD for MFGs while having theoretical guarantees of convergence.

\paragraph{Online extension of CURL:}
An interesting extension of Problem~\eqref{main_problem} is the online learning scenario, in which we consider computing a sequence of policies $(\pi^t)_{t \in [T]}$ for $T$ episodes with the objective of minimizing a total loss 
\begin{equation}\label{total_loss}
    \smash{L_T := \sum_{t=1}^T F^t(\mu^{\pi^t,p})},
\end{equation}

where we allow the objective function $F^t$ to change arbitrarily over time (and only be revealed at the end of each episode $t$).

Here, we consider dynamics such that $(x_0, a_0) \sim \mu_0(\cdot)$, and for all steps $n \in [N]$,
\begin{equation}\label{dynamics}
    x_{n+1} := g_n(x_n, a_n, \epsilon_n),
\end{equation}
where $(\epsilon_n)_{n \in [N]}$ is an independent sequence of external noises with $\epsilon_n \sim h_n(\cdot)$ for $h_n$ a distribution. 

Different variants of this problem can be considered, depending on the prior information available on the dynamics. Here, we consider the case where the agent has prior knowledge of the dynamics ($g_n$ is known), but may be subject to unknown external interference ($h_n$ is unknown). This includes scenarios such as: An energy central controlling the average consumption of electrical appliances. The temperature evolution equation is known, but consumer behavior is unknown and can interfere with the dynamics \citep{ana_busic}. Controlling a fleet of drones in a known environment, subject to external influences due to weather conditions or human intervention. Controlling the state of charge of electric vehicles so that their average consumption follows an energy production target that changes every day and is not known in advance. The dynamics of loading are known, but the arrival and departure of users are not \citep{adrien}.

\paragraph{Contribution $2$:}  We propose Greedy MD-CURL, an online learning algorithm for CURL with dynamics as in Equation~\eqref{dynamics} when $g_n$ is known but the noise distribution $h_n$ is unknown. At each episode, we play a policy $\pi^t$, observe the agent's behavior, update an estimate of the external noise, and use the estimated dynamics to compute the next policy using MD-CURL. Greedy MD-CURL achieves state-of-the-art sub-linear regrets with low complexity and simple closed-form solutions. We further avoid the $\sqrt{|\mathcal{X}|}$ term paid in UCRL approaches (see Section~\ref{related_work}) by showing a weaker control on the difference between the true and estimated probability kernels, being an advantage for models with large state spaces.

Balancing exploration and exploitation is challenging when both $g_n$ and $h_n$ are unknown, which can be computationally expensive. Greedy MD-CURL offers a low-complexity algorithm that achieves sub-linear regret, even without explicit exploration (see Remark~\ref{why_greedy}). Although its regret bound is not the state of the art, Greedy MD-CURL is a good option for scenarios where exploration is already induced by the objective function or by noisy models.

\section{Related Work}\label{related_work}

Online MDPs have mostly been studied in specific cases of CURL rather than in its general form, and draw inspiration from online learning problems \citep{cesa-bianchi_lugosi_2006}. In model-based RL \cite{even_dar_2009} were the first to propose a method dealing with adversarial functions, supposing the transition kernel is fully known in advance. \cite{neu12} extended this work to the case of unknown transition kernels with adversarial rewards, using techniques inspired by UCRL-2 (Upper Confidence Reinforcement Learning) \citep{UCRL-2}. Recently, UC-O-REPS was proposed by \cite{UC-O-REPS}, which extends \cite{zimin} O-REPS algorithm to the case of unknown dynamics and improves upon the regret bound of \cite{neu12}.

In the mean-field community, most approaches for unknown dynamics consider model-free scenarios, such as \cite{Angiuli2020UnifiedRQ, capponi_lehalle_2023, carmona}. M3-UCRL, proposed by \cite{M3-UCRL}, is the only model-based algorithm for mean-field control problems with unknown dynamics. It uses the principle of optimism under uncertainty with UCRL-2 techniques, but only provides regret bounds for Gaussian process dynamics and does not consider online adversarial objective functions.

We introduce the first algorithm for the online CURL problem with theoretical guarantees. Unlike UCRL and PSRL approaches \citep{PS-Osband-2013}, which are generally computationally expensive, our algorithm is nearly greedy and still achieves the same regret bounds with lower computational complexity, depending on the dynamics information available.

\section{General Problem Formulation}\label{problem_formulation}
Consider an episodic Markov decision process (MDP) with finite state space $\mathcal{X}$, finite action space $\mathcal{A}$, episodes of length $N$, and a sequence of transition probabilities ${p:= (p_n)_{n \in [N]}}$ where ${p_n : \mathcal{X} \times \mathcal{A} \times \mathcal{X} \rightarrow [0,1]}$. At time step $n$, an agent in state $x_n$ choosing action $a_n$ transitions to state $x_{n+1}$ with probability $p_{n+1}(x_{n+1}|x_n,a_n)$. At the start of an episode, the agent's first state-action couple follows a fixed distribution $\mu_0 \in \Delta_{ \mathcal{X} \times \mathcal{A}}$. Actions are chosen by means of a policy $\pi_n : \mathcal{X} \rightarrow \Delta_{\mathcal{A}}$ at each time step. In an episode, when an agent follows a sequence of strategies $\pi := (\pi_n)_{n \in [N]}$, we define $\mu^{\pi,p} := (\mu_n^{\pi,p})_{0 \leq n \leq N}$ the state-action distribution sequence induced by the policy $\pi$ in the MDP with probability kernel $p$ recursively for all $(x',a') \in \mathcal{X} \times \mathcal{A}$ and all $n \in [N]$:
\begin{align}\label{mu_induced_pi}
         \mu_0^{\pi,p}(x',a') &:= \mu_0(x',a') \\
         \mu_{n}^{\pi,p} (x',a') &:=\sum_{x\in\mathcal{X}} \sum_{a \in\mathcal{A}} \mu_{n-1}^{\pi,p}(x,a) p_{n}(x'| x, a) \pi_{n}(a' | x'). \nonumber
\end{align}
We let $\|\cdot\|_1$ be the $L_1$ norm, and for all ${v := (v_n)_{n \in [N]}}$, such that ${v_n \in \mathbb{R}^{\mathcal{X} \times \mathcal{A}}}$ we define ${\|v\|_{\infty, 1} := \sup_{0 \leq n \leq N} \|v_n\|_1}$. We define the objective function ${F(\mu) := \sum_{n=1}^N f_n(\mu_n)}$ where ${f_n: \Delta_{\mathcal{X} \times \mathcal{A}} \rightarrow \mathbb{R}}$ are convex and $\ell$-Lipschitz functions with respect to the norm $\|\cdot\|_1$.

\paragraph{Offline optimization setting (Section~\ref{algorithm_knownp})}
To solve the CURL problem, we propose a learning protocol that consists in following an iterative method. At each iteration $k \in [K]$, the learner computes a new policy by solving an auxiliary optimization problem. This auxiliary optimization problem, that we denote by $\mathfrak{F}$, depends on the previous policy $\pi^{k-1}$, the model dynamics $p$, and the objective function $F$, i.e. $\smash{\pi^{k} := \mathfrak{F}(\pi^{k-1}, p, F)}$. In Section~\ref{algorithm_knownp}, we show how to construct $\mathfrak{F}$ such that $\min_{k \in [K]} F(\mu^{\pi^k,p}) - F(\mu^{\pi^*,p})$ is bounded by a term of order $1/\sqrt{K}$, with $K$ the number of iterations, where $\pi^*$ is an optimal policy.

\paragraph{Online learning setting (Section~\ref{algorithm_unknownp})}
In the online extension of CURL, the objective function at episode $t \in [T]$ is denoted as $F^t := \sum_{n=1}^N f_n^t$, where $T$ is the total number of episodes, and $f_n^t: \Delta_{\mathcal{X} \times \mathcal{A}} \rightarrow \mathbb{R}$. We assume that $f_n^t$ is convex and $\ell$-Lipschitz with respect to the $\|\cdot\|_1$ norm. The functions $F^t$ are only revealed to the learner at the end of episode $t$. The learner's objective is to compute a sequence of strategies $(\pi^t)_{t \in[T]}$ minimizing their total loss defined in Equation~\eqref{total_loss}, and the learner's performance is measured by comparison to the best stationary policy, using the following regret:
\begin{equation}\label{regret}
    R_T := \sum_{t=1}^T F^t(\mu^{\pi^t, p}) - \min_{\pi \in (\Delta_{\mathcal{A}})^{ \mathcal{X} \times N}} \sum_{t=1}^T F^t(\mu^{\pi,p}).
\end{equation}

We consider the dynamics of Equation~\eqref{dynamics} when $g_n$ is known but $h_n$ is unknown. In order to choose a sequence of policies that minimize their total loss, the learner must then both optimize the objective function and learn the noise distribution through observations. The learner's online protocol is in Algorithm~\ref{alg:online_protocol}. 

At each episode $t$, the learner chooses a policy $\pi^t$, send it to $M$ independent agents, observes the external noise $(\epsilon_1^{j,t}, \ldots, \epsilon_N^{j,t})$ for each agent $j \in [M]$ over all $N$ steps (to retrieve the noises, it is enough to observe the agent's trajectory and for $g_n$ to be invertible), computes an estimate  $\hat{p}^{t+1}$ of the probability kernel using the observations, observes the objective function $F^t$, and calculates the policy for the next episode by applying the auxiliary problem $\mathfrak{F}$ on $\pi^{t}, F^t,$ and $\hat{p}^{t+1}$. 

To compute a strategy sequence with sub-linear regret the learner faces two challenges: how to estimate $\hat{p}^{t}$ from the data and how to define the auxiliary optimization problem $\mathfrak{F}$. In Section~\ref{algorithm_unknownp}, we show that by considering the same auxiliary optimization problem $\mathfrak{F}$ as in the optimization of the offline CURL problem, and by taking $\hat{p}^{t+1}$ as the empirical mean estimator, we can build an algorithm that achieves sub-linear regret. This result is studied in detail in Section~\ref{algorithm_unknownp}. Observing $M$ independent agents following the same policy is relevant to many applications, such as controlling the charging state of a set of electric vehicles. This allows us to explicitly express the dependence on $M$. Note that if we set $M=1$, we recover the standard case.

\begin{algorithm}[ht]
\caption{Learner's Online Protocol}\label{alg:online_protocol}
\begin{algorithmic}
\STATE {\bfseries Input:} initial state-action distribution $\mu_0$, initial strategy sequence $\pi^1$.
    \FOR{$t= 1,\ldots,T$}
    \FOR{$j =1, \ldots, M$}
    \STATE the $j$-th agent playing episode $t$ starts at $(x_0^{j,t}, a_0^{j,t}) \sim \mu_0(\cdot)$    
    \FOR{$n = 1, \ldots,N$}
        \STATE environment draws new state $x_{n}^{j,t} \sim p_{n}(\cdot|x_{n-1}^{j,t}, a_{n-1}^{j,t}) $
        \STATE learner observes agent's $j$ external noise $\epsilon_n^{j,t}$
        \STATE agent $j$ chooses an action $a_n^{j,t} \sim \pi^{t}_n(\cdot|x_n^{j,t})$
    \ENDFOR
    \ENDFOR
    \STATE learner computes, for all $n \in [N]$, new estimate $\hat{p}^{t+1}_n$ from data $(\epsilon_n^{j,s})_{s \in [t], j \in [M]}$
    \STATE objective function $F^t$ is exposed
    \STATE learner computes $\pi^{t+1} = \mathfrak{F}(\pi^{t}, \hat{p}^{t+1}, F^t)$ and send to all agents
    \ENDFOR
\STATE {\bfseries return} $\pi^T$
\end{algorithmic}
\end{algorithm}


\section{CURL as an optimisation problem}\label{algorithm_knownp}
\subsection{Reformulation of learner's objective}\label{problem_reformulation}
The CURL problem as presented in Equation~\eqref{main_problem} is problematic in that it is not convex in $\pi$, and calculating the gradient of $F$ with respect to the strategy $\pi$ can be intractable. Therefore, we reformulate the learner's objective to obtain a convex problem. We define
 \begin{align}\label{set_convex_mu}
        &\mathcal{M}^p_{\mu_0} :=  \bigg\{ \mu \in 
        (\Delta_{\mathcal{X} \times \mathcal{A}})^N  \big| \; \sum_{a' \in \mathcal{A}} \mu_{n}(x',a') = \\
        &\quad \sum_{x \in \mathcal{X} , a \in \mathcal{A}} p_{n}(x'|x,a) \mu_{n-1}(x,a)\;, \forall x' \in\mathcal{X}, 
         \forall n \in [N] \bigg\}, \nonumber
 \end{align}
 as the set of state-action distribution sequences satisfying the Bellman-flow in the MDP with transition kernel $p$ and initial state-action distribution $\mu_0$. For now, we assume that the probability kernel $p$ is known and, to minimize notations, we let $\mu^{\pi} :=\mu^{\pi, p}$ and $\mathcal{M}_{\mu_0} := \mathcal{M}^p_{\mu_0}.$ We also assume $\mu_0$ is always known.

 For any $\mu \in \mathcal{M}_{\mu_0}^p$, there exists a strategy $\pi$ such that $\mu^{\pi} = \mu$. It suffices to take $\pi_n(a|x) \propto \mu_n(x,a)$
 when the normalization factor is non-zero, and arbitrarily defined otherwise. This result is formally enunciated and proved in Proposition~\ref{opt_mu_equal_pi} in Appendix~\ref{missing_results} (see also \cite{putterman}). We therefore have the equivalence 
 \begin{equation*}
     \min_{\pi \in (\Delta_\mathcal{A})^{\mathcal{X} \times N}} F(\mu^{\pi}) \equiv \min_{\mu \in \mathcal{M}_{\mu_0}} F(\mu).
 \end{equation*}
 Note that the optimization problem over $\mu$ is convex. 

 \subsection{The Algorithm}
To build an algorithm solving the CURL problem, we need to build the auxiliary optimization problem $\mathfrak{F}$ discussed in Section~\ref{problem_formulation}. Let $\smash{\mathcal{M}_{\mu_0}^*}$ denote the subset of $\smash{\mathcal{M}_{\mu_0}}$ where the corresponding policies $\pi$ are such that $\pi_n(a|x) \neq 0$ for all $(x,a) \in \mathcal{X} \times \mathcal{A}$, $n \in [N]$. We define a regularization function $\smash{\Gamma : \mathcal{M}_{\mu_0} \times \mathcal{M}_{\mu_0}^* \to \mathbb{R}}$ as
\begin{equation}\label{gamma}
       \Gamma(\mu^\pi, \mu^{\pi'}) := \sum_{n=1}^{N} \mathbb{E}_{(x,a) \sim \mu^{\pi}_n(\cdot)}\bigg[\log\bigg(\frac{\pi_{n}(a|x)}{\pi'_{n}(a|x)}\bigg)\bigg],
\end{equation}
that is well defined thanks to the bijection between strategies and state-action distributions satisfying the Bellman flow (see Proposition~\ref{opt_mu_equal_pi}). We define the following iterative scheme with $\tau_k > 0$ and $\smash{\langle \nabla F(\mu^k),\mu^\pi\rangle := \sum_{n=1}^N \langle \nabla f_n(\mu_n^k), \mu_n^\pi \rangle}$:
\begin{equation}\label{MD_iterative_scheme}
\begin{split}
\mu^{k+1} \in \argmin_{\mu^\pi\in\mathcal{M}_{\mu_0}}
\bigg\{
\langle \nabla F(\mu^k),\mu^\pi\rangle 
+\frac{1}{\tau_k}  \Gamma(\mu^\pi, \mu^k)
\bigg\},
\end{split}
\end{equation}
where the idea is, at iteration $k+1$, to choose $\mu^\pi$ minimizing a linearization of the objective function around $\mu^k$, the distribution sequence found at the previous iteration, and at the same time penalizing the distance between policy $\pi$ inducing $\mu^\pi$ and $\pi^k$ inducing $\mu^k$. Our first main result of this section is in Theorem~\ref{MD_explicit_result}. It shows that, due to the choice of penalizing strategies, the iterative scheme in Equation~\eqref{MD_iterative_scheme} can be solved through dynamic programming \citep{DP_principles} by building a Bellman recursion:

\begin{theorem}\label{MD_explicit_result}
Let $k \geq 0$. The solution of Problem~\eqref{MD_iterative_scheme} is $\mu^{k+1} = \mu^{\pi^{k+1}}$, where for all $n \in [N]$, and $(x,a) \in \mathcal{X} \times \mathcal{A}$, 
\begin{equation}\label{policy_update}
\textstyle{
    \pi_{n}^{k+1}(a|x) := \frac{\pi_{n}^{k}(a|x) \exp\left(\tau_k \tilde{Q}_{n}^{k}(x,a) \right)}{\sum_{a' \in \mathcal{A}}\pi_{n}^{k}(a'|x) \exp\left(\tau_k \tilde{Q}_{n}^{k}(x,a') \right)},}
\end{equation}
where $\tilde{Q}$ is a regularized $Q$-function satisfying the following recursion
\begin{align}\label{Bellman_Q_tilde}
        &\Tilde{Q}^k_N(x,a) = -\nabla f_N(\mu_N^k)(x,a)  \nonumber \\
            &\Tilde{Q}^k_n(x,a) = \max_{\pi_{n+1} \in (\Delta_\mathcal{A})^{\mathcal{X}}} \Bigg\{ -\nabla f_n(\mu_n^k)(x,a) + \nonumber\\
            & \qquad \sum_{x'} p_{n+1}(x'|x,a) \sum_{a'} \pi_{n+ 1}(a'|x')  \\
            & \qquad \bigg[  - \frac{1}{\tau_k}\log\left( \frac{\pi_{n+1}(a'|x')}{\pi_{n+1}^k(a'|x')}\right) 
             + \Tilde{Q}^k_{n+1}(x',a') \bigg] \Bigg\}. \nonumber
\end{align}
\end{theorem}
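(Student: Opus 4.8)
The plan is to transfer the problem from the occupancy measure $\mu^\pi$ to the generating policy $\pi$, recognize the resulting objective as a KL-regularized finite-horizon MDP, and solve it by backward dynamic programming. First I would invoke the bijection between $\mathcal{M}_{\mu_0}$ and policies (Proposition~\ref{opt_mu_equal_pi}) to replace $\min_{\mu^\pi\in\mathcal{M}_{\mu_0}}$ by $\min_{\pi\in(\Delta_\mathcal{A})^{\mathcal{X}\times N}}$. Expanding $\langle\nabla F(\mu^k),\mu^\pi\rangle$ and $\Gamma(\mu^\pi,\mu^k)$ and writing $\mu_n^\pi(x,a)=d_n^\pi(x)\pi_n(a|x)$ with $d_n^\pi$ the state marginal, the objective becomes $J(\pi)=\sum_{n=1}^N\mathbb{E}_{(x,a)\sim\mu_n^\pi}[\nabla f_n(\mu_n^k)(x,a)+\tau_k^{-1}\log(\pi_n(a|x)/\pi_n^k(a|x))]$. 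Minimizing $J$ is the same as maximizing $-J$, which is exactly the total reward of an MDP with per-step reward $-\nabla f_n(\mu_n^k)$ and a KL penalty of weight $\tau_k^{-1}$ pulling $\pi_n(\cdot|x)$ toward the reference $\pi_n^k(\cdot|x)$. Here I would note that $\pi^k$ has strictly positive entries (it is itself a softmax at the previous iteration, assuming a positive initialization), so $\Gamma$ and all logarithms are well defined and the iterate indeed stays in $\mathcal{M}_{\mu_0}^*$.

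The computational engine is the Gibbs variational principle: for any vector $u$ and any positive reference $q^0\in\Delta_\mathcal{A}$,
\begin{equation*}
\max_{q\in\Delta_\mathcal{A}}\Big\{\textstyle\sum_a q(a)\big(u(a)-\tau_k^{-1}\log(q(a)/q^0(a))\big)\Big\}=\tau_k^{-1}\log\textstyle\sum_a q^0(a)e^{\tau_k u(a)},
\end{equation*}
with the unique maximizer $q(a)\propto q^0(a)e^{\tau_k u(a)}$. I would prove this one-line fact by Lagrange multipliers (or by rewriting the bracket as a negative KL to the Gibbs distribution plus a constant). Applied with $u=\tilde Q_n^k(x,\cdot)$ and $q^0=\pi_n^k(\cdot|x)$, it simultaneously yields the softmax update~\eqref{policy_update} and the soft value function $\tilde V_n^k(x):=\tau_k^{-1}\log\sum_a\pi_n^k(a|x)\exp(\tau_k\tilde Q_n^k(x,a))$, and it lets me rewrite the recursion~\eqref{Bellman_Q_tilde} in the compact soft-Bellman form $\tilde Q_n^k(x,a)=-\nabla f_n(\mu_n^k)(x,a)+\sum_{x'}p_{n+1}(x'|x,a)\tilde V_{n+1}^k(x')$, since the maximization over the full next-step policy $\pi_{n+1}$ decouples across states.

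The core of the argument is a backward-induction verification showing that, for every policy $\pi$ and every $n$, $\mathbb{E}_{x\sim d_n^\pi}[\tilde V_n^k(x)]\ge\sum_{m=n}^N\mathbb{E}_{(x,a)\sim\mu_m^\pi}[-\nabla f_m(\mu_m^k)(x,a)-\tau_k^{-1}\log(\pi_m(a|x)/\pi_m^k(a|x))]$, with equality if and only if $\pi_m$ equals the softmax~\eqref{policy_update} for all $m\ge n$. The base case $n=N$ is the Gibbs inequality at each state, weighted by $d_N^\pi$; the inductive step substitutes the soft-Bellman identity for $\tilde Q_n^k$ and telescopes using the flow relation $\sum_{x,a}\mu_n^\pi(x,a)p_{n+1}(x'|x,a)=d_{n+1}^\pi(x')$, which turns $\mathbb{E}_{\mu_n^\pi}[\sum_{x'}p_{n+1}\tilde V_{n+1}^k]$ into $\mathbb{E}_{d_{n+1}^\pi}[\tilde V_{n+1}^k]$. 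Taking $n=1$ gives $-J(\pi)\le\mathbb{E}_{x\sim d_1^\pi}[\tilde V_1^k(x)]$; since $d_1^\pi(x')=\sum_{x,a}\mu_0(x,a)p_1(x'|x,a)$ does not depend on $\pi$, the right-hand side is a constant, so $J$ is minimized exactly by the softmax policy $\pi^{k+1}$, and its occupancy measure $\mu^{\pi^{k+1}}$ is the solution of~\eqref{MD_iterative_scheme}. Note that this verification delivers global optimality among all policies directly, so I do not need the (separately true) convexity of $J$ in $\mu$.

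The step I expect to be the main obstacle is the careful bookkeeping in the backward induction: although $\mu_n^\pi$ depends on the past policies $\pi_1,\dots,\pi_n$, the value function $\tilde V_n^k$ must depend only on the current state, and one has to check that the per-step regularizer and reward recombine correctly under the expectation against $\mu_n^\pi$ rather than a full trajectory measure. The conceptual crux, by contrast, is the reformulation in the first paragraph: because $\Gamma$ is phrased through policies, a direct gradient of the objective in $\mu$ is awkward, and the key realization is that this non-standard regularization is precisely what makes the penalized linearized problem collapse to a standard soft-Bellman recursion solvable in closed form.
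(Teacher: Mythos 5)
Your proposal is correct and follows essentially the same route as the paper's proof: both pass from occupancy measures to policies via Proposition~\ref{opt_mu_equal_pi}, recognize Problem~\eqref{MD_iterative_scheme} as a KL-regularized finite-horizon control problem, and solve it by backward dynamic programming, with the per-state optimization yielding the softmax~\eqref{policy_update} via the Gibbs/Lagrange-multiplier computation. The only difference is one of rigor, not of route: where the paper simply invokes ``the optimality principle'' to assert the recursion~\eqref{Bellman_Q_tilde}, you prove the corresponding verification inequality by backward induction, which is a more self-contained justification of the same dynamic-programming step.
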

\begin{proof}
See Appendix~\ref{thm_1_proof}.
\end{proof}

\begin{algorithm}[ht]
\caption{MD-CURL}\label{alg:MD}
\begin{algorithmic}[1]
\STATE {\bfseries Input:} number of iterations $K$, initial sequence of policies $\smash{\pi^0 \in (\Delta_\mathcal{A})^{\mathcal{X} \times N}}$ such that $\smash{\mu^0 := \mu^{\pi_0} \in \mathcal{M}_{\mu_0}^*}$, objective function $F:= \sum_{n=1}^N f_n$, probability kernel $p = (p_n)_{n \in [N]}$, initial state-action  distribution $\mu_0$, sequence of non-negative learning rates $(\tau_k)_{k \leq K}$.
    \FOR{$k= 0,\ldots,K-1$} 
    \STATE $\mu^k = \mu^{\pi^k}$ as in Equation~\eqref{mu_induced_pi} \label{line:mu_induced_pi}
    \STATE $\tilde{Q}_N^{k}(x,a) = -\nabla f_N(\mu_N^k)(x,a)$, $\forall (x,a) \in \mathcal{X} \times \mathcal{A}$ 
    \FOR{$n = N,\ldots,1$}
    \STATE $\forall (x,a) \in \mathcal{X} \times \mathcal{A}:$
    \STATE $\pi_n^{k+1}(a|x)=  \frac{\pi_{n}^{k}(a|x) \exp\left(\tau_k \tilde{Q}_{n}^{k}(x,a) \right)}{\sum_{a'}\pi_{n}^{k}(a'|x) \exp\left(\tau_k \tilde{Q}_{n}^{k}(x,a') \right)}$ \label{line:exp_twist}
    \STATE $\tilde{Q}_{n-1}^{k}(x,a)$ using the recursion in Equation~\eqref{Bellman_Q_tilde} \label{line:compute_Q}
    \ENDFOR
    \ENDFOR
\STATE {\bfseries return} $\pi^K$
\end{algorithmic}
\end{algorithm}

It is not obvious at first sight, but we can show that $\Gamma$ is a Bregman divergence, making the iterative scheme an instance of mirror descent \citep{MD}. Therefore, we can state the convergence result of Algorithm~\ref{alg:MD}, MD-CURL, in Theorem~\ref{thm:convergence_rate}, the second main result of this section. Solving a mirror descent (MD) instance usually includes a projection step that is generally computationally expensive. The low-complexity methods existing in the literature can only offer approximate solutions \citep{MD_changing_costs, UC-O-REPS}. We show that with the judicious choice of divergence as in Equation~\eqref{MD_iterative_scheme}, MD can be solved accurately avoiding all costly projection steps. 

\begin{theorem}\label{thm:convergence_rate}
Let $\pi^*$ be a minimizer of Problem~\eqref{main_problem}. Define $L := \ell N$ where $\ell$ is the Lipschitz constant of $f_n$ with respect to $\|\cdot\|_1$ for all $n \in [N]$. Applying $K$ iterations of MD-CURL to this problem, with, for each $1 \leq k \leq K$,
$\tau_k := L^{-1} \sqrt{2 \Gamma(\mu^{\pi^*},\mu^0)/K},$
gives the following convergence rate
$$ \smash{\min_{0 \leq k \leq K} F(\mu^{\pi^k}) - F(\mu^{\pi^*}) \leq L \frac{\sqrt{2 \Gamma(\mu^{\pi^*},\mu^0)} }{\sqrt{K}}}.$$
\end{theorem}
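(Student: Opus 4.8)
The plan is to recognize the iterate map \eqref{MD_iterative_scheme} as a genuine mirror descent step and then run the textbook mirror descent analysis, the one nonstandard ingredient being the divergence $\Gamma$. First I would verify that $\Gamma$ is the Bregman divergence generated by $\phi(\mu) := \sum_{n=1}^N \phi_n(\mu_n)$, where $\phi_n$ is the negative conditional entropy $\phi_n(\mu_n) = \sum_{x,a}\mu_n(x,a)\log\mu_n(x,a) - \sum_x \rho_n(x)\log\rho_n(x)$ with $\rho_n(x) = \sum_a\mu_n(x,a)$. Substituting $\pi_n(a|x) = \mu_n(x,a)/\rho_n(x)$ shows $D_\phi(\mu,\mu') = \Gamma(\mu,\mu')$, and a short Cauchy--Schwarz computation confirms each $\phi_n$ is convex (its Hessian is positive semidefinite), so the scheme is indeed mirror descent over the polytope $\mathcal{M}_{\mu_0}$.

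With this identification I would invoke the three-point (prox) inequality for Bregman divergences: since $\mu^{k+1}$ minimizes $\langle\nabla F(\mu^k),\cdot\rangle + \tau_k^{-1}\Gamma(\cdot,\mu^k)$ over $\mathcal{M}_{\mu_0}$ and the minimizer lies in the relative interior (full-support policies, i.e. in $\mathcal{M}_{\mu_0}^*$), its optimality condition combined with the Bregman identity yields, for every $u = \mu^\pi \in \mathcal{M}_{\mu_0}$,
\[
\langle\nabla F(\mu^k),\mu^{k+1}-u\rangle \le \tfrac{1}{\tau_k}\big[\Gamma(u,\mu^k)-\Gamma(u,\mu^{k+1})-\Gamma(\mu^{k+1},\mu^k)\big].
\]
Taking $u = \mu^{\pi^*}$, adding $\langle\nabla F(\mu^k),\mu^k-\mu^{k+1}\rangle$ and using convexity of $F$ to lower-bound the left side by $F(\mu^k)-F(\mu^{\pi^*})$, then summing over $k$ so that the $\Gamma(\mu^{\pi^*},\cdot)$ terms telescope (discarding the nonnegative final $\Gamma(\mu^{\pi^*},\mu^K)$), reduces the whole estimate to controlling the residual drift $\sum_k\big[\langle\nabla F(\mu^k),\mu^k-\mu^{k+1}\rangle - \tau_k^{-1}\Gamma(\mu^{k+1},\mu^k)\big]$.

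The main obstacle is precisely this drift bound, and it is where the nonstandard nature of $\Gamma$ bites. Unlike the ordinary negative entropy, $\phi$ is flat along directions that rescale a single state's marginal while fixing its conditional (such vectors lie in the kernel of the Hessian of $\phi_n$), so $\Gamma$ is \emph{not} strongly convex in $\mu$ in the naive sense; it penalizes only discrepancies between the conditionals $\pi_n(\cdot|x)$. I would therefore prove a Pinsker-type inequality tailored to $\Gamma$: apply Pinsker per $(n,x)$ and Jensen to get $\Gamma(\mu,\mu') \ge \tfrac12\sum_n\big(\sum_x\rho_n(x)\|\pi_n(\cdot|x)-\pi'_n(\cdot|x)\|_1\big)^2$, and separately control the full state-action gap by showing marginal discrepancies propagate contractively through the shared known kernel, $\|\rho_n-\rho'_n\|_1 \le \|\mu_{n-1}-\mu'_{n-1}\|_1$, so that $\|\mu_n-\mu'_n\|_1$ telescopes into a sum of conditional gaps. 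This delivers strong convexity of $\Gamma$ relative to $\|\cdot\|_{\infty,1}$, after which $F$ being $L$-Lipschitz in $\|\cdot\|_{\infty,1}$ (equivalently $\|\nabla F(\mu^k)\|_{1,\infty}\le L$) lets a Fenchel--Young estimate bound each drift term by $\tfrac{\tau_k}{2}L^2$.

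Finally I would assemble the pieces into $\frac1K\sum_{k<K}\big(F(\mu^k)-F(\mu^{\pi^*})\big) \le \tau^{-1}K^{-1}\Gamma(\mu^{\pi^*},\mu^0) + \tfrac{\tau}{2}L^2$ for the constant step size $\tau$, optimize the right-hand side over $\tau$ to recover the stated $\tau_k = L^{-1}\sqrt{2\Gamma(\mu^{\pi^*},\mu^0)/K}$, and bound the minimum over $k$ by the average. The points I expect to demand the most care are the exact constant in the Pinsker-type inequality (tracking the dependence on $N$ so that it lines up with $L=\ell N$) and the justification of the interior-minimizer claim underlying the prox inequality.
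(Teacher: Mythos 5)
Your overall architecture is the same as the paper's: identify $\Gamma$ as the Bregman divergence of the conditional-entropy potential, prove relative strong convexity with respect to $\|\cdot\|_{\infty,1}$, and run the standard mirror-descent telescoping argument with Young's inequality (the paper packages this last part by citing the MD convergence theorem after Proposition~\ref{penalization_is_bregman} and Lemma~\ref{proof_conv_MD_MFC}). You also correctly identify the crux: $\Gamma$ vanishes on pairs with equal conditionals but different marginals, so naive strong convexity over $(\Delta_{\mathcal{X}\times\mathcal{A}})^N$ fails and one must exploit that both iterates lie in $\mathcal{M}_{\mu_0}$.

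The gap is in how you establish the strong convexity, and it is quantitative but real: your route cannot produce the constant in the theorem. Applying Pinsker per pair $(n,x)$ and Jensen gives $\Gamma(\mu,\mu')\ge\tfrac12\sum_n g_n^2$ with $g_n:=\sum_x\rho_n(x)\|\pi_n(\cdot|x)-\pi_n'(\cdot|x)\|_1$, and your propagation step (which is correct, since the shared kernel is an $L_1$-contraction) gives $\|\mu-\mu'\|_{\infty,1}\le\sum_{n=1}^N g_n$. But to compare $\left(\sum_n g_n\right)^2$ with $\sum_n g_n^2$ you must pay Cauchy--Schwarz, yielding only
\begin{equation*}
\Gamma(\mu,\mu')\;\ge\;\frac{1}{2N}\,\|\mu-\mu'\|_{\infty,1}^2,
\end{equation*}
i.e.\ strong convexity modulus $1/N$, and this loss is unavoidable on your route (it is tight when all $g_n$ are equal). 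Feeding modulus $1/N$ into the drift bound gives $\tfrac{\tau_k}{2}NL^2$ per step and a final rate $L\sqrt{2N\,\Gamma(\mu^{\pi^*},\mu^0)/K}$, which is worse than the stated bound by $\sqrt{N}$ --- precisely the constant-tracking issue you flagged but did not resolve. The missing idea is the paper's Lemma~\ref{prop:D_decomp}: because $\mu$ and $\mu'$ both satisfy the Bellman flow for the \emph{same} kernel $p$, the transition factors cancel in the log-likelihood ratio of the joint trajectory laws, so $\Gamma(\mu,\mu')=D(\mu_{1:N},\mu'_{1:N})$, the KL divergence between the joint distributions on $(\mathcal{X}\times\mathcal{A})^{N+1}$. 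A single application of Pinsker at the level of joints, followed by marginalization ($\|\mu_{1:N}-\mu'_{1:N}\|_1\ge\|\mu_n-\mu'_n\|_1$ for every $n$), then gives $\Gamma(\mu,\mu')\ge\tfrac12\|\mu-\mu'\|_{\infty,1}^2$ with modulus $1$, independent of $N$. Replacing your per-step Pinsker argument by this joint-distribution identity repairs the proof and recovers exactly the stated rate.
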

\begin{proof} 
For ease of notation, for any probability measure $\eta \in \Delta_E$, whatever the (finite) space $E$, we introduce the neg-entropy function, with the convention that $0 \log(0) = 0$, $\phi (\eta):=\sum_{x\in E} \eta (x)\log \eta (x)$.
 
\begin{proposition}\label{penalization_is_bregman}
    Let $\mu \in \mathcal{M}_{\mu_0}$ with marginal given by $\rho \in (\Delta_\mathcal{X})^N$. The divergence $\Gamma$ is a Bregman divergence induced by 
    \begin{equation*}
     \smash{\psi(\mu) := \sum_{n=1}^N \phi(\mu_n) - \sum_{n=1}^N \phi(\rho_n)}.
    \end{equation*}
    Also, $\psi$ is $1$-strongly convex with respect to $\|\cdot\|_{\infty,1}$.
\end{proposition}
The proof is in Appendix~\ref{penalization_is_bregman_proof} and consists in showing that the $\Gamma$ divergence taking values on the sequence of state-action distributions is in fact the KL divergence on the joint distribution. Next, if $f_n$ is convex and $\ell$-Lipschitz with respect to the norm $\|\cdot\|_1$ for all $n \in [N]$, then $F$ is also convex and Lipschitz with constant $\smash{L := \ell N}$ with respect to the norm $\|\cdot\|_{\infty,1}$ (see Appendix~\ref{proof_conv_MD_MFC}). Since the set $\mathcal{M}_{\mu_0}$ is convex, all convergence assumptions of MD \citep{MD} are satisfied, and the rate of convergence follows.
\end{proof}

\section{Online learning extension of CURL}\label{algorithm_unknownp}
We consider here the online variant of Problem~\eqref{main_problem}, where the learner must compute a sequence of strategies while facing unknown external noise and arbitrarily changing objective functions. We introduce Greedy MD-CURL, a new algorithm achieving sub-linear regret with a simple closed-form solution. At episode $t$, Greedy MD-CURL solves an optimization problem in the MDP induced by the estimated probability kernel $\hat{p}^t$ using one iteration of MD-CURL. We refer to $p$ as the true probability kernel and $\hat{p}^t$ as the estimated one.

\subsection{Learning the model}\label{learning_the_model}

Since the learner does not know the noise dynamics, it has to estimate it from its experience. To obtain a sub-linear regret, the learner must learn $\hat{p}^t$ in such a way that its distance to the real probability kernel decreases with $t$ with high probability. Let us denote $M_n^t$ the number of times the learner observes step $n$ until the start of episode $t$, and $\epsilon_n^{s}$ the $s$-th noise observed at step $n$. Recall that the dynamics follow Equation~\eqref{dynamics}, and that the learner observes the noise values from the agent's trajectory. Let $\delta_x$ be the Dirac distribution centered in $x$. We define
\begin{equation}\label{p_estimation}
    \smash{\hat{p}_n^t(\cdot| x,a) := \frac{1}{M_n^t} \sum_{s=1}^{M^t_n} \delta_{g_n(x,a, \epsilon_n^{s})}(\cdot)}.
\end{equation}

For any function $\Lambda: \mathcal{X} \rightarrow \mathbb{R}$, for all $n \in [N]$ and $(x,a) \in \mathcal{X} \times \mathcal{A}$ we introduce the notation
\begin{equation*}
    \big(p_n - \hat{p}_n^t \big) \big(\Lambda \big) (x,a) := \sum_{x' \in \mathcal{X}} \big( p_n(x'|x,a) - \hat{p}_n^t(x'|x,a) \big) \Lambda(x').
\end{equation*}
We have the following concentration result.
\begin{lemma}\label{concentration_inner_mu}
    Let $\gamma > 0$. For any $0 < \delta < 1$ and any function $\Lambda: \mathcal{X} \rightarrow \mathbb{R}$ such that $|\Lambda(x')| \leq \sqrt{\gamma}/2$ for all $x' \in \mathcal{X}$,
    \begin{equation*}
         \smash{\big(p_n - \hat{p}_n^t \big) \big(\Lambda \big) (x,a) \leq \sqrt{\frac{\gamma}{2 M_n^t} \log\bigg(\frac{N |\mathcal{X}| |\mathcal{A}| T}{\delta} \bigg)}}
    \end{equation*}
    holds with probability $1-\delta$ simultaneously for all $(x,a) \in \mathcal{X} \times \mathcal{A}$, steps $n \in [N]$, and episodes $t \in [T]$.
\end{lemma}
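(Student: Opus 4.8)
The plan is to recognize the left-hand side as the deviation of an empirical mean from its expectation, and then apply a one-sided Hoeffding bound followed by a union bound. \textbf{First}, I would make the key reduction: since the true kernel satisfies $p_n(x'|x,a) = \mathbb{P}_{\epsilon \sim h_n}(g_n(x,a,\epsilon) = x')$, for a fixed $\Lambda$ we have
\begin{align*}
    p_n(\Lambda)(x,a) &= \mathbb{E}_{\epsilon \sim h_n}\big[\Lambda(g_n(x,a,\epsilon))\big], \\
    \hat{p}_n^t(\Lambda)(x,a) &= \tfrac{1}{M_n^t}\textstyle\sum_{s=1}^{M_n^t} \Lambda\big(g_n(x,a,\epsilon_n^{s})\big).
\end{align*}
Writing $Y_s := \Lambda(g_n(x,a,\epsilon_n^{s}))$, the observed noises $(\epsilon_n^{s})_s$ are i.i.d.\ copies of $\epsilon_n \sim h_n$ (independent across agents and episodes), so the $Y_s$ are i.i.d.\ with mean $p_n(\Lambda)(x,a)$, and $(p_n - \hat{p}_n^t)(\Lambda)(x,a)$ is exactly $\mathbb{E}[Y_1] - \tfrac{1}{M_n^t}\sum_s Y_s$, the centered empirical mean. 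The hypothesis $|\Lambda| \le \sqrt{\gamma}/2$ forces $Y_s \in [-\sqrt{\gamma}/2, \sqrt{\gamma}/2]$, a range of width $\sqrt{\gamma}$.

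\textbf{Second}, I would record that $M_n^t$ is deterministic. In the protocol of Algorithm~\ref{alg:online_protocol}, each of the $M$ agents completes all $N$ steps every episode, so step $n$ is observed exactly $M$ times per episode and $M_n^t = M(t-1)$. This is the feature that lets me invoke Hoeffding at a \emph{fixed} sample size rather than at a random one. For a fixed tuple $(x,a,n,t)$, the one-sided Hoeffding inequality gives, for any $u > 0$,
\begin{equation*}
    \mathbb{P}\Big( (p_n - \hat{p}_n^t)(\Lambda)(x,a) \ge u \Big) \le \exp\Big( - \tfrac{2 M_n^t u^2}{\gamma} \Big).
\end{equation*}

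\textbf{Third}, I would invert this tail bound and union bound. Setting the right-hand side equal to $\delta' := \delta/(N|\mathcal{X}||\mathcal{A}|T)$ and solving for $u$ yields the threshold $u = \sqrt{(\gamma/(2 M_n^t))\log(N|\mathcal{X}||\mathcal{A}|T/\delta)}$, which is exactly the claimed bound. A union bound over the $N|\mathcal{X}||\mathcal{A}|T$ tuples $(x,a,n,t)$ then makes the inequality hold simultaneously for all of them with probability at least $1 - \delta$; note that independence across $t$ is not needed, since the union bound only requires the individual tail probabilities.

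\textbf{Main obstacle and caveats.} The only genuinely delicate point is the status of $M_n^t$: if the number of observations were data-dependent, a single Hoeffding bound at a fixed sample size would not suffice and one would need a time-uniform (peeling or maximal-inequality) argument. Here determinism removes that difficulty. A second point to keep in mind is that $\Lambda$ must be a fixed (non-random) function for the $Y_s$ to be independent of the draw used to build $\hat{p}_n^t$; the lemma's simultaneity is over $(x,a,n,t)$ only, not over $\Lambda$, so the statement is consistent with this. Everything else is a routine Hoeffding-plus-union-bound computation.
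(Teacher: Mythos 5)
Your proposal is correct and follows essentially the same route as the paper's proof: rewrite $(p_n-\hat{p}_n^t)(\Lambda)(x,a)$ as the deviation of an empirical mean of the bounded i.i.d.\ variables $\Lambda(g_n(x,a,\epsilon_n^s))$ from its expectation, apply one-sided Hoeffding at the deterministic sample size $M_n^t$, and union bound over the $N|\mathcal{X}||\mathcal{A}|T$ tuples. Your explicit remarks on why determinism of $M_n^t$ and fixedness of $\Lambda$ matter are sound and, if anything, slightly more careful than the paper's own write-up.
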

\begin{proof}
    See Appendix~\ref{proof:concentration_inner_mu}.
\end{proof}

In the literature \citep{UCRL-2, UC-O-REPS}, it is common to bound the $L_1$ deviation between $p$ and $\hat{p}^t$ instead. However, this deteriorates the bound by an additional factor of $\sqrt{|\mathcal{X}|}$, which we can avoid here because of our dynamics hypothesis. This means that in the final regret analysis, we only pay the number of states in a term proportional to $\sqrt{\log(|\mathcal{X}|)}$, which is an advantage for problems with large state spaces or even to discretize continuous state space problems.

We further state Lemma~\ref{lemma:weak_bound_mu}, which is proven in Appendix~\ref{proof:weak_bound_mu} and used later to prove the regret bound of Greedy MD-CURL.

\begin{lemma}\label{lemma:weak_bound_mu}
For any vector $v \in \mathbb{R}^{|\mathcal{X}| \times |\mathcal{A}|}$, for any strategy $\pi$ and for all $0 \leq n \leq N$,
    \begin{equation*}
        \begin{split}
            &\langle v,  \mu_n^{\pi, p} - \mu_n^{\pi, \hat{p}^t} \rangle = \sum_{i=1}^{n} \sum_{y \in \mathcal{X} \times \mathcal{A}} \mu_{i-1}^{\pi, \hat{p}^t}(y) \big(p_i- \hat{p}_i^t \big) (\Lambda^{i,n, \pi}_v)(y),
        \end{split}
    \end{equation*}
    where $\Lambda^{i,n, \pi}_v :\mathcal{X} \rightarrow \mathbb{R}$ is a function depending on $v, i, n$ and $\pi$ defined in Equation~\eqref{def_Lambda}. Also, if $\|v\|_\infty := \sup_{(x,a) \in \mathcal{X} \times \mathcal{A}} |v(x,a)| \leq V$, then $\|\Lambda_v^{i,n, \pi}\|_\infty \leq V$.
\end{lemma}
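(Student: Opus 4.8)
The plan is to prove the identity by a telescoping (performance-difference) argument over the step at which the two kernels first disagree, and to identify the functions $\Lambda^{i,n,\pi}_v$ with backward value functions propagated under the \emph{true} kernel $p$. Fix $n$, $v$, and $\pi$. For each index $i$ with $0 \le i \le n$, introduce the hybrid state-action distribution $\nu^{(i)}$ obtained by running the recursion~\eqref{mu_induced_pi} with the estimated kernel $\hat{p}^t$ for the first $i$ steps and with the true kernel $p$ for the remaining steps $i+1,\dots,n$. By construction $\nu^{(0)}=\mu_n^{\pi,p}$ and $\nu^{(n)}=\mu_n^{\pi,\hat{p}^t}$, so that $\langle v,\mu_n^{\pi,p}-\mu_n^{\pi,\hat{p}^t}\rangle=\sum_{i=1}^n \langle v,\nu^{(i-1)}-\nu^{(i)}\rangle$.

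Next I would define the backward state-action value $Q^{i}$ and the state value $\Lambda^{i,n,\pi}_v$ under $p$ by $Q^{n}(x,a):=v(x,a)$ and, for $i<n$, $Q^{i}(x,a):=\sum_{x'}p_{i+1}(x'|x,a)\sum_{a'}\pi_{i+1}(a'|x')Q^{i+1}(x',a')$, together with $\Lambda^{i,n,\pi}_v(x'):=\sum_{a'}\pi_i(a'|x')Q^{i}(x',a')$; this is the quantity referenced as Equation~\eqref{def_Lambda}. The two hybrids $\nu^{(i-1)}$ and $\nu^{(i)}$ share the distribution $\mu_{i-1}^{\pi,\hat{p}^t}$ at step $i-1$ and use identical dynamics $p$ after step $i$; they differ only in the kernel applied at step $i$, namely $p_i$ for $\nu^{(i-1)}$ and $\hat{p}_i^t$ for $\nu^{(i)}$. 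Expanding each inner product by pushing $v$ backwards through the common future dynamics collapses everything beyond step $i$ into $\Lambda^{i,n,\pi}_v$, yielding $\langle v,\nu^{(i-1)}-\nu^{(i)}\rangle=\sum_{y}\mu_{i-1}^{\pi,\hat{p}^t}(y)\,(p_i-\hat{p}_i^t)(\Lambda^{i,n,\pi}_v)(y)$. Summing over $i$ from $1$ to $n$ produces the claimed identity.

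For the norm bound I would argue by backward induction on $i$. Since $\pi_i(\cdot|x')$ and $p_{i+1}(\cdot|x,a)$ are probability distributions, both the policy-averaging step defining $\Lambda^{i,n,\pi}_v$ from $Q^{i}$ and the kernel-averaging step defining $Q^{i}$ from $\Lambda^{i+1,n,\pi}_v$ are convex combinations, hence non-expansive for $\|\cdot\|_\infty$. Thus $\|Q^{i}\|_\infty\le\|Q^{i+1}\|_\infty$ and $\|\Lambda^{i,n,\pi}_v\|_\infty\le\|Q^{i}\|_\infty$, so the base case $\|Q^{n}\|_\infty=\|v\|_\infty\le V$ propagates to $\|\Lambda^{i,n,\pi}_v\|_\infty\le V$ for every $i$.

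The main obstacle is bookkeeping rather than a deep difficulty: one must orient the telescoping so that the \emph{estimated}-kernel measure $\mu_{i-1}^{\pi,\hat{p}^t}$ (and not $\mu_{i-1}^{\pi,p}$) multiplies each kernel gap, which forces the hybrids to use $\hat{p}^t$ before step $i$ and $p$ afterwards, and correspondingly forces $\Lambda^{i,n,\pi}_v$ to be the future value under the true kernel $p$. Keeping this direction consistent, together with the boundary convention that the sum is empty and both sides vanish when $n=0$, is the only point that requires care.
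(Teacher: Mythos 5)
Your proposal is correct and follows essentially the same route as the paper: your hybrid measures $\nu^{(i)}$ (estimated kernel before step $i$, true kernel after) are exactly the intermediate terms $\mu_0\hat{K}^t_{1:i-1}K_{i:n}$ in the paper's operator-telescoping $\mu_0 K_{1:n}-\mu_0\hat{K}^t_{1:n}=\sum_{i=1}^n \mu_{i-1}^{\pi,\hat{p}^t}(K_i-\hat{K}^t_i)K_{i+1:n}$, your backward recursion for $Q^i$ and $\Lambda^{i,n,\pi}_v$ coincides with the paper's definition $\Lambda^{i,n,\pi}_v(x)=\sum_a \pi_i(a|x)\sum_y K_{i+1:n}(y)v(y)$, and your non-expansiveness argument for the sup-norm bound is the same stochasticity observation the paper uses. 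The orientation issue you flag (the estimated-kernel measure must multiply each kernel gap) is indeed the only delicate point, and you resolve it exactly as the paper does.
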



\subsection{Optimization problem}
Recall that the learner follows the online protocol in Algorithm~\ref{alg:online_protocol}. At each episode, the learner estimates $\hat{p}^t$ from the noise observations using Equation~\eqref{p_estimation}, We denote by $\mathcal{M}_{\mu_0}^{t} := \mathcal{M}_{\mu_0}^{\hat{p}^t}$ the set induced by this estimate (as in Equation~\eqref{set_convex_mu}). At every episode the learner solves
\begin{equation}\label{MD_iteration_unkonwnp}
    \mu^{t+1} \in \argmin_{\mu \in \mathcal{M}_{\mu_0}^{t+1}} \{ \tau \langle \nabla F^t(\mu^t), \mu \rangle + \Gamma( \mu, \tilde{\mu}^t) \}, \vspace*{-6pt}
\end{equation}
where, $\mu^t := \mu^{\pi^t, \hat{p}^t}$ and $\tilde{\mu}^t := \mu^{\tilde{\pi}^t,\hat{p}^t}$ with \vspace*{-5pt}
\begin{equation}\label{tilde_pi}
    \smash{\tilde{\pi}^t := (1-\alpha_t) \pi^t +  \textstyle{\frac{\alpha_t}{|\mathcal{A}|}},} \vspace{-2pt}
\end{equation}
and $\alpha_t \in (0,1/2)$ is an exploration parameter. 

In Theorem~\ref{MD_explicit_result}, we have already shown that the optimization problem of Equation~\eqref{MD_iteration_unkonwnp} with Bregman divergence $\Gamma$ has the format of an exponential twist as in Equation~\eqref{policy_update}. Consequently, we can build Greedy MD-CURL in Algorithm~\ref{alg:greedy_MD}. Note that to compute the policy for episode $t+1$, we perform one iteration of MD-CURL using $\pi^t$ to compute $\mu^t$ as in line~\ref{line:mu_induced_pi} of Algorithm~\ref{alg:MD}, $\tilde{\pi}^t$ to compute the exponential twist in line~\ref{line:exp_twist} and to compute $\tilde{Q}$ recursively in line~\ref{line:compute_Q}, the objective function $F^t$ and the estimated probability kernel $\hat{p}^{{t+1}}$.

\begin{remark}\label{why_greedy}
    We call our algorithm Greedy because it solves the optimization problem~\eqref{MD_iteration_unkonwnp} at each episode using the empirically estimated dynamics~\eqref{p_estimation} as if they were the true ones, without confidence intervals or exploration bonuses related to visit counts as usually is the case \citep{UCRL-2, UC-O-REPS, ucbvi}.
\end{remark}

\begin{algorithm}
\caption{Greedy MD-CURL}\label{alg:greedy_MD}
\begin{algorithmic}
\STATE {\bfseries Input:} number of episodes $T$, initial sequence of policies $\smash{\pi^1 \in (\Delta_\mathcal{A})^{\mathcal{X} \times N}}$, number of observations per episode $M$, initial state-action  distribution $\mu_0$, learning rate $\tau > 0$, sequence of parameters $(\alpha_t)_{t \in [T]}.$
\STATE {\bfseries Initialization:} \quad $\forall (x,a)$, $p^1(\cdot|x,a) = \frac{1}{|\mathcal{X}|}$  
    \FOR{$t= 1,\ldots,T$} 
    \FOR{$j=1, \ldots, M$}
    \STATE $j$-th agent starts at $(x_0^{j,t}, a_0^{j,t}) \sim \mu_0(\cdot)$
    \FOR{$n = 1, \ldots, N$}
    \STATE environment draws new state $x_{n}^{j,t} \sim p_n(\cdot|x_{n-1}^{j,t}, a_{n-1}^{j,t})$
    \STATE learner observes agent $j$'s external noise $\epsilon^{j,t}_n$ 
    \STATE  agent $j$ chooses an action $a_n^{j,t} \sim \pi_n^t(\cdot|x_{n}^{j,t})$
    \ENDFOR
    \ENDFOR
    \STATE update probability kernel estimate for all $(x,a)$:
    \STATE $\displaystyle{\hat{p}^{t+1}_n(\cdot|x,a) := \frac{1}{Mt} \sum_{j=1}^M \delta_{g_n(x,a,\epsilon_n^{j,t})} + \frac{t-1}{t} \hat{p}^{t}_n(\cdot|x,a)}$ 
    \STATE compute policy for the next episode:
    \STATE $\quad \pi^{t+1} := \text{MD-CURL}(1, \pi^t \backslash \tilde{\pi}^t, F^t, \hat{p}^{t+1}, \mu_0, \tau)$
    \STATE compute $\tilde{\pi}^{t+1}$ as in Equation~\eqref{tilde_pi}
    \ENDFOR
\STATE {\bfseries return} $(\pi^t)_{t \in [T]}$
\end{algorithmic}
\end{algorithm}

\subsection{Regret analysis}\label{analysis}
In this section, we prove the regret bound of Greedy MD-CURL. For that, we use the results from Subsection~\ref{learning_the_model} and some results of OMD \citep{OMD}, while also having to handle an online optimization problem with varying constraint sets. We decompose the regret~\eqref{regret} into three terms, 
\begin{equation*}
    \begin{split}
R_T &= \sum_{t=1}^T  F^t(\mu^{\pi^t,p}) - F^t(\mu^{\pi^t, \hat{p}^t}) \\ &+\sum_{t=1}^T F^t(\mu^{\pi^t, \hat{p}^t}) - F^t(\mu^{\pi^*, \hat{p}^{t+1}}) \\
&+ \sum_{t=1}^T F^t(\mu^{\pi^*, \hat{p}^{t+1}}) - F^t(\mu^{\pi^*,p}) \\
&:= R_T^{MDP}((\pi^t)_{t \in [T]}) + R_T^{policy} + R_T^{MDP}(\pi^*)\,, 
    \end{split}
\end{equation*}
where ${\pi^* := \argmin_{\pi \in (\Delta_{\mathcal{A}})^{\mathcal{X} \times N}} \sum_{t=1}^T F^t(\mu^{\pi,p})}$. The terms $R_T^{MDP}((\pi^t)_{t \in [T]})$ and $R_T^{MDP}(\pi^*)$ pay for the error due to not knowing the true probability kernel, and the term $R_T^{policy}$ pays for calculating sub-optimal policies using MD-CURL with constraint sets varying with each episode. Propositions~\ref{prop:bound_regret_proba} and \ref{thm:bound_regret_md} bound each of these terms, yielding our main result:

\begin{theorem}\label{main_result}
Consider an episodic MDP with finite state space $\mathcal{X}$, finite action space $\mathcal{A}$, episodes of length $N$, and probability kernel $p:=(p_n)_{n \in [N]}$. Let $F^t := \sum_{n=1}^N f_n^t$ convex with $f_n^t$ $\ell$-Lipschitz with respect to the norm $\|\cdot\|_1$ for all $n \in [N], t \in [T]$. Let
\begin{equation}\label{auxiliary_b}
\begin{split}
         b &:= \smash{\textstyle{ \big( \sum_{t=1}^T 2 \big[ N \alpha_t
        + \frac{N^2}{t} \log\big(\frac{|\mathcal{A}|}{\alpha_t}\big) + N^2\big(\frac{1}{t} + \alpha_t \big)^2\big] }} \\
        & \qquad\qquad   \smash{\textstyle{ + \big(N \log(|\mathcal{A}|)\big) \big)^{\frac{1}{2}}}}
\end{split}
\end{equation}
Then, with probability $1-\delta$, Greedy MD-CURL obtains, for $\tau = \frac{b}{L \sqrt{T}}$,
    \begin{equation*}
                \smash{R_T \leq  2 \ell N b \sqrt{T} + 2 \ell N^2 \sqrt{\frac{2 T}{M} \log\bigg(\frac{N |\mathcal{X}| |\mathcal{A}| T}{\delta}\bigg)}}.
    \end{equation*}
\end{theorem}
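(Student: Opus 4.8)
The plan is to bound each of the three pieces in the decomposition stated just before the theorem and then sum them on a common high-probability event; the two model-error terms $R_T^{MDP}((\pi^t)_{t\in[T]})$ and $R_T^{MDP}(\pi^*)$ are handled by the same argument (this is Proposition~\ref{prop:bound_regret_proba}), while $R_T^{policy}$ requires a dedicated online mirror descent analysis (Proposition~\ref{thm:bound_regret_md}). For the model-error terms I first linearize using convexity of $F^t$: for instance $F^t(\mu^{\pi^*,\hat p^{t+1}})-F^t(\mu^{\pi^*,p})\le \langle \nabla F^t(\mu^{\pi^*,\hat p^{t+1}}),\,\mu^{\pi^*,\hat p^{t+1}}-\mu^{\pi^*,p}\rangle=\sum_{n=1}^N\langle\nabla f_n^t,\mu_n^{\pi^*,\hat p^{t+1}}-\mu_n^{\pi^*,p}\rangle$, and symmetrically for the $\pi^t$ term.

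I then apply Lemma~\ref{lemma:weak_bound_mu} with $v=\nabla f_n^t$ to rewrite each gap between the true and estimated occupancy measures as a sum over $i\le n$ of terms $(p_i-\hat p_i^{t+1})(\Lambda_v^{i,n,\pi^*})$. Since $f_n^t$ is $\ell$-Lipschitz in $\|\cdot\|_1$, its gradient satisfies $\|\nabla f_n^t\|_\infty\le\ell$, so Lemma~\ref{lemma:weak_bound_mu} gives $\|\Lambda_v^{i,n,\pi^*}\|_\infty\le\ell$ and Lemma~\ref{concentration_inner_mu} (taking $\gamma=4\ell^2$) bounds each such term by $\ell\sqrt{(2/M_i^{t+1})\log(N|\mathcal{X}||\mathcal{A}|T/\delta)}$ uniformly in $(x,a)$ on the high-probability event; the uniformity in $(x,a)$ is exactly what lets the count $M_i^{t+1}\approx Mt$ appear with no extra $\sqrt{|\mathcal{X}|}$ factor. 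The double sum over $i\le n$ and $n\le N$ produces a factor of order $N^2$, and summing over episodes with $\sum_{t=1}^T t^{-1/2}\le 2\sqrt T$ yields the bound $N^2\ell\sqrt{(2T/M)\log(N|\mathcal{X}||\mathcal{A}|T/\delta)}$ for each of the two model-error terms, hence the second term $2\ell N^2\sqrt{(2T/M)\log(N|\mathcal{X}||\mathcal{A}|T/\delta)}$ of the claimed bound.

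For $R_T^{policy}$ I again linearize by convexity and analyze $\sum_t\langle\nabla F^t(\mu^t),\mu^t-\mu^{\pi^*,\hat p^{t+1}}\rangle$ as online mirror descent with the Bregman divergence $\Gamma$, which by Proposition~\ref{penalization_is_bregman} is generated by a function that is $1$-strongly convex in $\|\cdot\|_{\infty,1}$. The proximal step~\eqref{MD_iteration_unkonwnp} is centered at the \emph{explored} iterate $\tilde\mu^t$, so the three-point inequality gives, after telescoping, an initial divergence term plus a gradient term $\tau\,L^2T$ with $L=\ell N$ and $\|\nabla F^t\|_{(\infty,1)^*}\le L$. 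The two genuine difficulties are that both the feasible set $\mathcal{M}_{\mu_0}^{t}$ and the comparator $\mu^{\pi^*,\hat p^{t+1}}$ move with $t$. The mixing $\tilde\pi^t=(1-\alpha_t)\pi^t+\alpha_t/|\mathcal{A}|$ of~\eqref{tilde_pi} keeps every played policy bounded below by $\alpha_t/|\mathcal{A}|$, so all log-ratios in $\Gamma$ stay finite; this costs a bias of order $N\alpha_t$ per episode (Lipschitzness of $F^t$ together with $\|\mu^{\pi^t}-\tilde\mu^t\|_{\infty,1}\lesssim\alpha_t$). The comparator drift is controlled by bounding $\Gamma(\mu^{\pi^*,\hat p^{t+1}},\tilde\mu^t)-\Gamma(\mu^{\pi^*,\hat p^{t}},\tilde\mu^{t-1})$, which reduces to how far $\hat p^{t+1}$ moves from $\hat p^t$; since the estimator is a running average over $t$ batches this shift is $O(1/t)$, giving the $\frac{N^2}{t}\log(|\mathcal{A}|/\alpha_t)$ and second-order $N^2(1/t+\alpha_t)^2$ contributions, with the $\log(|\mathcal{A}|/\alpha_t)$ coming from the $\alpha_t/|\mathcal{A}|$ lower bound on probabilities. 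Collecting the initial divergence $N\log|\mathcal{A}|$ and all per-episode drifts into the quantity $b^2$ of Equation~\eqref{auxiliary_b}, the bound takes the form $R_T^{policy}\le \frac{b^2}{\tau}+\tau L^2T$; the choice $\tau=b/(L\sqrt T)$ balances the two terms and yields $R_T^{policy}\le 2\ell N b\sqrt T$.

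Summing the three bounds on the same event of Lemma~\ref{concentration_inner_mu} and using $\tau=b/(L\sqrt T)$ gives the stated regret. I expect the main obstacle to be $R_T^{policy}$: carrying the mirror descent telescoping through a \emph{moving} constraint set and a \emph{moving} comparator is nonstandard, and the crux is to show that the per-episode drift of both the comparator and the Bregman center is summably small, which is precisely what forces the exploration schedule $\alpha_t$ and the strict positivity of the played policies and what fixes the exact form of $b$. The delicate point is verifying that the kernel drift $\hat p^{t+1}-\hat p^t=O(1/t)$ propagates into $\Gamma$ through the weak, inner-product deviation of Lemmas~\ref{concentration_inner_mu} and~\ref{lemma:weak_bound_mu}, rather than through an $L_1$ penalty that would reintroduce the $\sqrt{|\mathcal{X}|}$ factor.
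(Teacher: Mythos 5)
Your proposal follows essentially the same route as the paper's proof: the same three-term decomposition of $R_T$, the same treatment of the two model-error terms via Lemma~\ref{lemma:weak_bound_mu} and Lemma~\ref{concentration_inner_mu} with $\gamma = 4\ell^2$ (which is exactly how the paper avoids the $\sqrt{|\mathcal{X}|}$ factor in Proposition~\ref{prop:bound_regret_proba}), and the same mirror-descent analysis of $R_T^{policy}$ in Proposition~\ref{thm:bound_regret_md} (three-point inequality, exploration mixing $\tilde{\pi}^t$, kernel drift $O(1/t)$ via Lemma~\ref{lemma:bound_consecutive_p}, initial entropy bound $N\log(|\mathcal{A}|)$, Young's inequality giving $\tau L^2 T$, and the choice $\tau = b/(L\sqrt{T})$). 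The only discrepancies are attributional and do not affect validity: in the paper the per-episode $N\alpha_t$ cost comes from swapping the Bregman center $\tilde{\mu}^t \to \mu^t$ inside $\Gamma$ (term $(ii)$ of the telescoping), not from Lipschitzness of $F^t$, and the $N^2(1/t+\alpha_t)^2$ contribution comes from the Young's-inequality term that pays for not knowing $F^t$ at the start of episode $t$, not from the comparator drift.
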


In particular, choosing $\alpha_t =T^{-1}$ for all $t \in [T]$, yields $R_T = O(\sqrt{T} \log(T))$.



\subsubsection{Bounding $R_T^{MDP}$}
Here we show the bounds on $R_T^{MDP}\big((\pi^t)_{t \in [T]}\big)$ and $R_T^{MDP}(\pi^*)$. Both indicate the difference between the loss of playing a sequence of policies over $T$ episodes in the actual MDP and the loss of playing the same sequence of policies but in the estimated MDP. For the first term, the sequence is that produced by Greedy MD-CURL, i.e. $(\pi^t)_{t \in [T]}$, and for the second term, it is the best stationary policy over the horizon $T$, i.e. $\pi^*$. The results are presented in Proposition~\ref{prop:bound_regret_proba} and use the lemmas from Subsection~\ref{learning_the_model}.

\begin{proposition}\label{prop:bound_regret_proba}
Under the same hypothesis as in Theorem~\ref{main_result}, with probability $1-\delta$, Greedy MD-CURL obtains,
    \begin{equation*}
    R_T^{MDP}\big((\pi^t)_{t \in [T]}\big) \leq \ell N^2 \sqrt{\frac{2 T}{M} \log\bigg(\frac{N |\mathcal{X}| |\mathcal{A}| T}{\delta}\bigg)}. \\
    \end{equation*}
    The exact same result being also valid for $R_T^{MDP}\big(\pi^*\big)$.
\end{proposition}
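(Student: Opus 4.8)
The plan is to bound $R_T^{MDP}\big((\pi^t)_{t\in[T]}\big)=\sum_{t=1}^T\sum_{n=1}^N\big[f_n^t(\mu_n^{\pi^t,p})-f_n^t(\mu_n^{\pi^t,\hat p^t})\big]$ by reducing each episode's error to the one-dimensional kernel deviations controlled in Lemma~\ref{concentration_inner_mu}. First I would use convexity of each $f_n^t$ to linearise: since $f_n^t(\mu_n^{\pi^t,p})-f_n^t(\mu_n^{\pi^t,\hat p^t})\le\langle\nabla f_n^t(\mu_n^{\pi^t,p}),\,\mu_n^{\pi^t,p}-\mu_n^{\pi^t,\hat p^t}\rangle$, and the $\ell$-Lipschitzness of $f_n^t$ in $\|\cdot\|_1$ forces $\|\nabla f_n^t(\mu_n^{\pi^t,p})\|_\infty\le\ell$, the whole quantity is at most $\sum_{t,n}\langle v_n^t,\,\mu_n^{\pi^t,p}-\mu_n^{\pi^t,\hat p^t}\rangle$ with $v_n^t:=\nabla f_n^t(\mu_n^{\pi^t,p})$ and $\|v_n^t\|_\infty\le\ell$.

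Next I would apply Lemma~\ref{lemma:weak_bound_mu} to each inner product with $v=v_n^t$ and $\pi=\pi^t$, rewriting it as $\sum_{i=1}^n\sum_y\mu_{i-1}^{\pi^t,\hat p^t}(y)\,(p_i-\hat p_i^t)(\Lambda_{v_n^t}^{i,n,\pi^t})(y)$ where $\|\Lambda_{v_n^t}^{i,n,\pi^t}\|_\infty\le\ell$. Taking $\gamma=4\ell^2$ in Lemma~\ref{concentration_inner_mu}, so that $\sqrt\gamma/2=\ell$ matches the magnitude hypothesis, bounds each deviation $(p_i-\hat p_i^t)(\Lambda)(y)$ by $\ell\sqrt{\tfrac{2}{M_i^t}\log(N|\mathcal X||\mathcal A|T/\delta)}$, uniformly in $(y,i,n,t)$ on an event of probability $1-\delta$; since $\sum_y\mu_{i-1}^{\pi^t,\hat p^t}(y)=1$, the probability weights collapse and each inner product is at most $\sum_{i=1}^n\ell\sqrt{\tfrac{2}{M_i^t}\log(\cdots)}$. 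Substituting $M_i^t=M(t-1)$, the double sum over $i\le n\le N$ contributes an $O(N^2)$ factor while $\sum_t 1/\sqrt{t-1}=O(\sqrt T)$ supplies the $\sqrt{T/M}$ scaling, which yields the claimed $\ell N^2\sqrt{\tfrac{2T}{M}\log(N|\mathcal X||\mathcal A|T/\delta)}$ up to the routine tracking of constants (the $t=1$ term being absorbed as an $O(1)$ contribution). For $R_T^{MDP}(\pi^*)$ the identical computation applies with $\hat p^{t+1}$ in place of $\hat p^t$, i.e.\ $M_i^{t+1}=Mt$; crucially $\pi^*$ and the true $p$ are deterministic, so the functions $\Lambda_{v}^{i,n,\pi^*}$ are deterministic and Lemma~\ref{concentration_inner_mu} applies to them verbatim.

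The main obstacle is the $(\pi^t)$ term. The functions $\Lambda_{v_n^t}^{i,n,\pi^t}$ are \emph{data-dependent} through $\pi^t$, which is itself computed from the very noise observations that define $\hat p^t$, so $\Lambda$ and $p_i-\hat p_i^t$ are not independent and the fixed-$\Lambda$ guarantee of Lemma~\ref{concentration_inner_mu} cannot be invoked naively. The delicate point is to legitimise the concentration for the realised random $\Lambda$ while retaining only the $\sqrt{\log|\mathcal X|}$ dependence: a crude uniform bound over all $\|\Lambda\|_\infty\le\ell$ would reduce to a $\|p_i-\hat p_i^t\|_1$ control and reintroduce the $\sqrt{|\mathcal X|}$ factor that this weaker estimate is meant to avoid. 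I would therefore rely on the \emph{simultaneous-over-$(x,a),n,t$} form of Lemma~\ref{concentration_inner_mu} together with a filtration/decoupling argument separating the noise randomness from the policy's dependence on it, rather than a covering of the value-function class, so that the $\sqrt{|\mathcal X|}$ penalty is never incurred.
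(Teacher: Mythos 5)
Your mechanical steps reproduce the paper's proof exactly: convexity gives $R_T^{MDP}\big((\pi^t)_{t\in[T]}\big)\le\sum_{t=1}^T\sum_{n=1}^N\langle\nabla f_n^t(\mu_n^{\pi^t,p}),\,\mu_n^{\pi^t,p}-\mu_n^{\pi^t,\hat p^t}\rangle$; Lemma~\ref{lemma:weak_bound_mu} rewrites each inner product as a $\mu^{\pi^t,\hat p^t}$-weighted sum of deviations $(p_i-\hat p_i^t)(\Lambda^{i,n,\pi^t}_{v})(y)$ with $\|\Lambda^{i,n,\pi^t}_{v}\|_\infty\le\ell$; Lemma~\ref{concentration_inner_mu} with $\gamma=4\ell^2$ bounds each deviation; and summing over $i\le n\le N$ and $t\in[T]$ with $M_n^t=M(t-1)$ gives the claimed rate (the paper, like you, sweeps the $t=1$ term and the constant from $\sum_t t^{-1/2}$ under the rug).

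The divergence is in what you call the main obstacle, and here your diagnosis is accurate but your resolution is absent --- and, importantly, the paper has no resolution either. The paper invokes Lemma~\ref{concentration_inner_mu} for $\Lambda^{i,n,\pi^t}_{\nabla f_n^t(\mu^{\pi^t,p})}$ after checking only the boundedness hypothesis; but that lemma is proved by applying Hoeffding's inequality to the variables $\Lambda(g_n(x,a,\epsilon_n^s))$ for a \emph{fixed} function $\Lambda$, with a union bound taken over $(x,a)$, $n$, $t$ only, not over any class of functions. Since $\pi^t$ (hence $\Lambda^{i,n,\pi^t}_v$) is computed by Greedy MD-CURL from the very noise samples $(\epsilon_n^{j,s})_{s<t}$ that define $\hat p^t$, the summands are neither independent nor correctly centered once $\Lambda$ is random, which is precisely the ``naive invocation'' you say cannot be made; and, as you note, the brute-force alternative of taking a supremum over $\|\Lambda\|_\infty\le\ell$ collapses to an $L_1$ bound on $p_i-\hat p_i^t$ and reinstates the $\sqrt{|\mathcal X|}$ factor the proposition is advertised to avoid. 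Your proposed remedy (``a filtration/decoupling argument'') is only named, never executed, so this step remains open in your write-up --- but it is equally open in the paper, which is silent on the issue. One further inaccuracy on your side: the claim that $R_T^{MDP}(\pi^*)$ goes through ``verbatim'' because $\pi^*$ and $p$ are deterministic is too quick. The convexity step for that term evaluates the gradient at the random point $\mu^{\pi^*,\hat p^{t+1}}$, so the vector $v=\nabla f_n^t(\mu_n^{\pi^*,\hat p^{t+1}})$, and therefore $\Lambda^{i,n,\pi^*}_v$, is again data-dependent; only when the $f_n^t$ are linear (constant gradient) does that term genuinely escape the dependence problem.
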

\begin{proof}
    See Appendix~\ref{proof:bound_regret_proba}.
\end{proof}




\subsubsection{Bounding $R_T^{policy}$}
The term $R_T^{policy}$ pays for the loss associated with the convergence of MD-CURL. Our main challenge is to deal with the terms concerning variable constraint sets $\mathcal{M}_{\mu_0}^t$. They depend on a bound on the difference between the state-action distributions induced by two consecutive probability kernel estimates, i.e. $\|\mu^{\pi, \hat{p}^t} - \mu^{\pi, \hat{p}^{t+1}}\|_{\infty,1}$ stated in Lemma~\ref{lemma:bound_consecutive_p}. We also need a bound on $\|\nabla \psi(\mu^{\pi,p})\|_{\infty,1}$, the function inducing the Bregman divergence, justifying our construction in Equation~\eqref{tilde_pi}. The result is stated in Proposition~\ref{thm:bound_regret_md}.

\begin{lemma}\label{lemma:bound_consecutive_p}
    For any policy sequence $\pi \in (\Delta_{\mathcal{A}})^{\mathcal{X} \times N}$, the estimation of the probability kernel for two consecutive episodes done by Greedy MD-CURL satisfies, for all episodes $t \in [T-1]$, the following inequality
    \begin{equation*}
        \smash{\| \mu^{\pi, \hat{p}^{t+1}} - \mu^{\pi, \hat{p}^t} \|_{\infty,1} \leq \frac{2 N}{t}.}
    \end{equation*}
\end{lemma}

\begin{proposition}\label{thm:bound_regret_md}
Under the same hypothesis as in Theorem~\ref{main_result}, let $b$ be defined as in Equation~\eqref{auxiliary_b}. Then, Greedy MD-CURL obtains, for $\tau = \frac{b}{L \sqrt{T}}$,
    \begin{equation*}
                \smash{R_T^{policy} \leq  2 \ell N b \sqrt{T}.}
    \end{equation*}
\end{proposition}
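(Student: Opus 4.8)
The plan is to read $R_T^{policy}=\sum_{t=1}^T F^t(\mu^{\pi^t,\hat p^t})-F^t(\mu^{\pi^*,\hat p^{t+1}})$ as the regret of an online mirror descent instance whose feasible set $\mathcal M_{\mu_0}^{t+1}$ and prox center both move with $t$, and to run the standard mirror-descent argument while accounting for the two sources of non-stationarity: the changing constraint set (because $\hat p^{t+1}\neq\hat p^{t}$) and the exploration-shifted center $\tilde\mu^t$ from \eqref{tilde_pi}. Write $\mu^t:=\mu^{\pi^t,\hat p^t}$ and $\bar\mu^{t}:=\mu^{\pi^*,\hat p^{t+1}}$, and note first that $\bar\mu^t$ is feasible for the step-$t$ update, i.e. $\bar\mu^t\in\mathcal M_{\mu_0}^{t+1}$, since it is the occupancy measure generated by $\pi^*$ under $\hat p^{t+1}$; this is precisely why the regret was decomposed with comparison point $\mu^{\pi^*,\hat p^{t+1}}$ rather than $\mu^{\pi^*,p}$. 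I would use convexity of $F^t$ to pass to the linearized regret $R_T^{policy}\le\sum_t\langle\nabla F^t(\mu^t),\mu^t-\bar\mu^t\rangle$, and then invoke the optimality condition of the prox step \eqref{MD_iteration_unkonwnp} together with the three-point identity of the Bregman divergence $\Gamma$ (induced by $\psi$ and $1$-strongly convex in $\|\cdot\|_{\infty,1}$ by Proposition~\ref{penalization_is_bregman}) to obtain, for each $t$,
\[
\tau\langle\nabla F^t(\mu^t),\mu^{t+1}-\bar\mu^t\rangle\le\Gamma(\bar\mu^t,\tilde\mu^t)-\Gamma(\bar\mu^t,\mu^{t+1})-\Gamma(\mu^{t+1},\tilde\mu^t).
\]

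Next I would split $\langle\nabla F^t,\mu^t-\bar\mu^t\rangle=\langle\nabla F^t,\mu^t-\tilde\mu^t\rangle+\langle\nabla F^t,\tilde\mu^t-\mu^{t+1}\rangle+\langle\nabla F^t,\mu^{t+1}-\bar\mu^t\rangle$. For the middle term, Fenchel--Young plus $1$-strong convexity give $\tau\langle\nabla F^t,\tilde\mu^t-\mu^{t+1}\rangle\le\tfrac{\tau^2 L^2}{2}+\Gamma(\mu^{t+1},\tilde\mu^t)$, so the negative $\Gamma(\mu^{t+1},\tilde\mu^t)$ above cancels exactly; here $L=\ell N$ bounds $\|\nabla F^t\|$ in the dual of $\|\cdot\|_{\infty,1}$, as in the proof of Theorem~\ref{thm:convergence_rate}. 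The first term $\langle\nabla F^t,\mu^t-\tilde\mu^t\rangle\le L\|\mu^t-\tilde\mu^t\|_{\infty,1}$ is an exploration penalty; since $\tilde\pi^t$ differs from $\pi^t$ only by the $\alpha_t$-uniform mixture of \eqref{tilde_pi}, propagating this perturbation through the $N$-step flow under the common kernel $\hat p^t$ yields $\|\mu^t-\tilde\mu^t\|_{\infty,1}=O(N\alpha_t)$, the origin of the $N\alpha_t$ contribution to $b^2$. Summing over $t$ and dividing by $\tau$ leaves the per-step term $\tfrac{\tau L^2}{2}$ together with the divergence differences $\tfrac1\tau\sum_t\big[\Gamma(\bar\mu^t,\tilde\mu^t)-\Gamma(\bar\mu^t,\mu^{t+1})\big]$.

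The crux is the now only approximate telescoping of that last sum. Reindexing so that $\Gamma(\bar\mu^t,\mu^{t+1})$ from step $t$ is matched against $\Gamma(\bar\mu^{t+1},\tilde\mu^{t+1})$ from step $t+1$, the sum collapses to an initial divergence $\Gamma(\bar\mu^1,\tilde\mu^1)=\Gamma(\mu^{\pi^*,\hat p^2},\tilde\mu^1)\le N\log|\mathcal A|$ (the maximal value of $\Gamma$ between occupancy measures, giving the $N\log|\mathcal A|$ term of $b^2$), minus a nonnegative final divergence, plus per-step discrepancies $\Gamma(\bar\mu^{t+1},\tilde\mu^{t+1})-\Gamma(\bar\mu^t,\mu^{t+1})$. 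Each discrepancy I would handle by changing the first and second arguments of $\Gamma$ separately, using $\Gamma(a',b)-\Gamma(a,b)=\langle\nabla\psi(a)-\nabla\psi(b),a'-a\rangle+\Gamma(a',a)$ and its analogue in the second slot. The first-order terms are controlled by $\|\nabla\psi(\tilde\mu^t)\|_\infty\cdot\|\Delta\|_{\infty,1}$, where crucially $\|\nabla\psi(\tilde\mu^t)\|_\infty=O(\log(|\mathcal A|/\alpha_t))$ because the $\alpha_t$-floor in \eqref{tilde_pi} keeps $\tilde\pi^t$ bounded away from $0$, and $\|\Delta\|_{\infty,1}\le 2N/t$ for the kernel change by Lemma~\ref{lemma:bound_consecutive_p} (and $O(N\alpha_t)$ for the center change); this produces the $\tfrac{N^2}{t}\log(|\mathcal A|/\alpha_t)$ terms, while the Bregman remainders $\Gamma(a',a)\lesssim\|\Delta\|_{\infty,1}^2$ produce the $N^2(\tfrac1t+\alpha_t)^2$ terms. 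Collecting everything gives $R_T^{policy}\le\tfrac1\tau\cdot\tfrac{b^2}{2}+\tfrac{\tau L^2 T}{2}$, and the choice $\tau=b/(L\sqrt T)$ balances the two sides to the claimed $2\ell N b\sqrt T$ with $L=\ell N$.

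I expect the main obstacle to be exactly this quasi-telescoping with a moving constraint set, coupled with the unboundedness of $\nabla\psi$ (a log-gradient) on the simplex boundary. The two issues are intertwined: the kernel update $\hat p^t\to\hat p^{t+1}$ forces the comparison point to drift, and bounding the resulting first-order Bregman terms requires $\nabla\psi$ at the prox centers to be finite, which fails at $\mu^t$ itself but holds at the explored $\tilde\mu^t$. This is the structural reason the divergence is evaluated at $\tilde\mu^t$ rather than $\mu^t$, and why $\alpha_t$ must be tuned: too small makes $\log(1/\alpha_t)$ blow up, too large makes $N\alpha_t$ dominate, and $\alpha_t=1/T$ balances them to give $b=O\!\big(N\sqrt{\log(|\mathcal A|T)\log T}\big)$ and hence $R_T^{policy}=O(\sqrt T\log T)$.
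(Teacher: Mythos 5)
Your plan follows the paper's architecture quite closely (linearize by convexity, prox optimality plus the three-point inequality at the feasible comparator $\mu^{\pi^*,\hat p^{t+1}}\in\mathcal M_{\mu_0}^{t+1}$, then a Fenchel--Young ``prediction'' term and a quasi-telescoping sum handled with Lemma~\ref{lemma:bound_consecutive_p}, the $\log(|\mathcal A|/\alpha_t)$ gradient bound and the $N\log|\mathcal A|$ initial bound), but two of your key steps fail as written. The more serious one is the claimed exact cancellation $\tau\langle\nabla F^t,\tilde\mu^t-\mu^{t+1}\rangle\le\frac{\tau^2L^2}{2}+\Gamma(\mu^{t+1},\tilde\mu^t)$: this needs $\frac12\|\tilde\mu^t-\mu^{t+1}\|_{\infty,1}^2\le\Gamma(\mu^{t+1},\tilde\mu^t)$, but $\tilde\mu^t=\mu^{\tilde\pi^t,\hat p^t}$ and $\mu^{t+1}\in\mathcal M_{\mu_0}^{t+1}$ are occupancy measures under \emph{different} kernels, and the Pinsker-based strong convexity of Proposition~\ref{penalization_is_bregman} is only established (through Lemma~\ref{prop:D_decomp}) for pairs generated under a common kernel. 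In fact $\Gamma(\mu,\mu')$ depends on its second argument only through the policy, so it can vanish while $\|\mu-\mu'\|_{\infty,1}>0$: if $F^t$ is constant, the update~\eqref{MD_iteration_unkonwnp} returns $\pi^{t+1}=\tilde\pi^t$, hence $\Gamma(\mu^{t+1},\tilde\mu^t)=0$, yet $\mu^{t+1}=\mu^{\tilde\pi^t,\hat p^{t+1}}\neq\mu^{\tilde\pi^t,\hat p^t}=\tilde\mu^t$ whenever $\hat p^{t+1}\neq\hat p^t$. The paper repairs exactly this point: it uses $\Gamma(\mu^{t+1},\tilde\mu^t)=D\big(\mu^{t+1}_{1:N},\mu^{\tilde\pi^t,\hat p^{t+1}}_{1:N}\big)\ge\frac12\|\mu^{t+1}-\mu^{\tilde\pi^t,\hat p^{t+1}}\|_{\infty,1}^2$ (same kernel $\hat p^{t+1}$ on both sides) and then pays $\|\mu^t-\mu^{\tilde\pi^t,\hat p^{t+1}}\|_{\infty,1}\le 2N(1/t+\alpha_t)$ via Lemma~\ref{proof_mu_tilde}; this correction --- not any second-order Bregman remainder --- is the true source of the $N^2(1/t+\alpha_t)^2$ terms in $b$, which you misattribute.

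The quasi-telescoping step has a second gap. Your identity $\Gamma(a',b)-\Gamma(a,b)=\langle\nabla\psi(a)-\nabla\psi(b),a'-a\rangle+\Gamma(a',a)$ evaluates $\nabla\psi$ at $a=\bar\mu^t$, whose policy is $\pi^*$; since $\pi^*$ may be deterministic, $\nabla\psi(\bar\mu^t)=(\log\pi_n^*)_n$ can have infinite entries, so the first-order term is uncontrollable. (The remainder happens to be harmless in your matching, since $\Gamma(\bar\mu^{t+1},\bar\mu^t)=0$ --- both measures are generated by $\pi^*$ --- but your general claim $\Gamma(a',a)\lesssim\|a'-a\|_{\infty,1}^2$ is also unjustified: $\psi$ is entropy-like, hence not smooth, and any quadratic upper bound carries a factor inverse to the policy floor.) The fix is to use the other exact form, $\Gamma(a',b)-\Gamma(a,b)=\psi(a')-\psi(a)-\langle\nabla\psi(b),a'-a\rangle$, so that the $\psi$-values at the comparators telescope across $t$ and $\nabla\psi$ is evaluated only at the prox center $\tilde\mu^t$, where the $\alpha_t$-floor of Equation~\eqref{tilde_pi} gives $\|\nabla\psi(\tilde\mu^t)\|_{1,\infty}\le N\log(|\mathcal A|/\alpha_t)$; this is precisely the paper's splitting of the second term of Equation~\eqref{R_MD_partial_bound} into $(i)$--$(iii)$, with the second-slot change $\tilde\mu^t\to\mu^t$ computed exactly ($\le 2N\alpha_t$). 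A smaller slip in the same spirit: $\Gamma(\bar\mu^1,\tilde\mu^1)\le N\log|\mathcal A|$ is not ``the maximal value of $\Gamma$'' ($\Gamma$ is unbounded when the second argument's policy has small components); it holds only thanks to the uniform initialization $\nabla\psi(\mu^1)=0$, which is what Lemma~\ref{upper_bound_gamma} provides.
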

\begin{proof}
See Appendix~\ref{proof:bound_OMD_term}.
\end{proof}

\begin{remark}
Appendix~\ref{bounds_unknown_g} shows that Greedy MD-CURL also has sub-linear regret in T when both $g_n$ and $h_n$ are unknown and the learner observes the trajectory of state-action pairs that each agent follows. Although its regret is not the best in the state of the art, Greedy MD-CURL is a good option when exploration is already induced by the environment.
\end{remark}

\section{Showcase experiments}\label{experiments}
In this section, we evaluate the performance of MD-CURL and Greedy MD-CURL on the \textit{entropy maximisation} and \textit{multi-objectives} problems, both introduced by \cite{perrolat_curl}. To test Greedy MD-CURL's ability to learn the unknown dynamics, we consider a version with fixed, non-adversarial objective functions and the same probability kernel for all $ n \in [N]$. Appendix~\ref{more_experiments} provides further experimental results.

\subsection{Environments}
We consider a model where the state space is a $11 \times 11$ four-room dimensional grid world with a single door connecting adjacent rooms. At each step, the agent can choose to stay still, go right, left, up or down, provided that there are no walls in the way:
\begin{equation}\label{four_walls_dynamics}
    x_{n+1} = x_n + a_n + \epsilon_n,
\end{equation}
with $a_n \in \{(0,0), (0,1), (1,0), (-1,0), (0,-1)\}$. The external noise $\epsilon_n$ represents a perturbation that pushes the agent to a neighbor state with a certain probability. We suppose the initial distribution is a Dirac at the upper left corner of the grid as in Figure~\ref{fig:auxiliary_images} [left]. 
\paragraph{Entropy maximisation} At each step, $f_n(\mu_n^{\pi,p}) := \langle \rho_n^{\pi,p}, \log(\rho_n^{\pi,p}) \rangle$, where $\rho_n^{\pi,p}(x) := \sum_{a \in \mathcal{A}} \mu_n^{\pi,p}(x,a)$. Thus minimizing $F := \sum_{n =1}^N f_n$ means maximizing the entropy, so the optimal value is when the distribution is uniform over the state space (obs.: contrary to intuition, the uniform policy does not provide an optimal solution, as can be seen in Figure~\ref{fig:auxiliary_images}, [middle]). 
\paragraph{Multi-objectives} The goal is for the distribution to be concentrated on the three targets in Figure~\ref{fig:auxiliary_images}, [right], by the final step $N$. We let $f_n(\mu_n^{\pi,p}) := -\sum_{k=1}^3 (1 - \langle \rho_n^{\pi,p}, e^k \rangle)^2$, where $e^k \in \mathbb{R}^{|\mathcal{X}|}$ is a vector with zero everywhere and $1$ in the element corresponding to a target state. Note that the target may not be reachable by any policy.
 
\begin{figure}
     \centering
     \begin{subfigure}[h]{0.15\textwidth}
         \centering
         \includegraphics[scale=0.215]{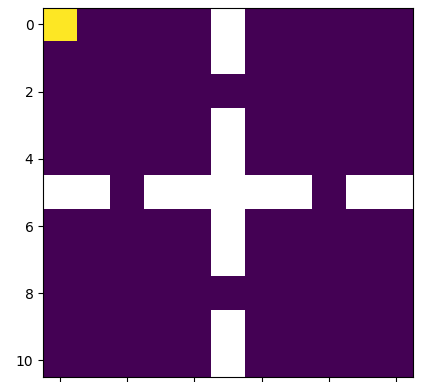}
     \end{subfigure}
     \begin{subfigure}[h]{0.15\textwidth}
         \centering
         \includegraphics[scale=0.215]{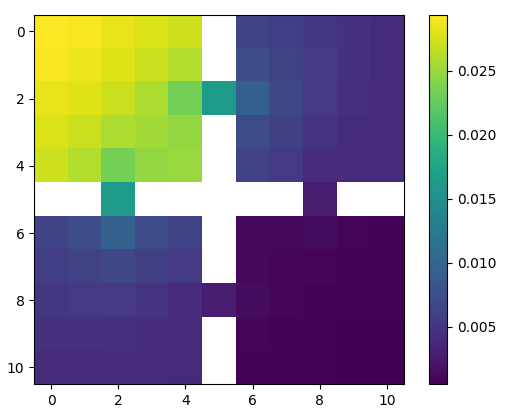}
     \end{subfigure}
     \begin{subfigure}[h]{0.15\textwidth}
         \centering
         \includegraphics[scale=0.215]{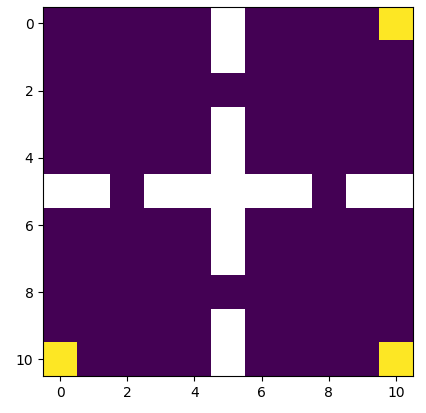}
     \end{subfigure}
        \caption{[left] Initial agent distribution; [middle] Distribution induced by the uniform policy; [right] The three targets. }
        \label{fig:auxiliary_images}
\end{figure}

\subsection{Numerical experiments}
For all experiments we consider $N = 40$. Figures~\ref{fig:exp1_max_entropy} and \ref{fig:exp1_multi_obj} show at left the state distribution at $N=40$ computed after $500$ iterations of MD-CURL for each setting, and at right its log regret per iteration compared to that of OMD for MFG. The OMD algorithm for MFGs is the state-of-the-art method for the problems addressed in this paper, as shown by \cite{MFG_survey}, but have no convergence results for discrete iterations. Therefore, MD-CURL is a good alternative for achieving state-of-the-art performance, with the advantage of having theoretical results. Note that both algorithms converge similarly, we leave the analysis of their differences for future work.
\begin{figure}
     \centering
     \begin{subfigure}[h]{0.23\textwidth}
         \centering
         \includegraphics[scale=0.25]{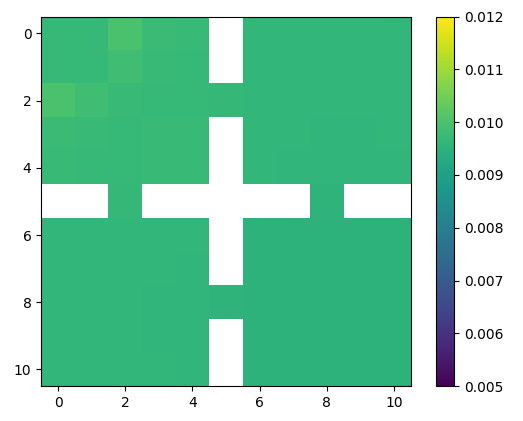}
     \end{subfigure}
     \begin{subfigure}[h]{0.23\textwidth}
         \centering
         \input{experiment1/max_entropy/objective_function_step40_noise_0.05_model_True_noise_True.txt}
     \end{subfigure}
        \caption{Entropy maximisation: [left] MD-CURL distribution at $N = 40$; [right] Log regret.}
        \label{fig:exp1_max_entropy}
\end{figure}
\begin{figure}
     \centering
     \begin{subfigure}[h]{0.23\textwidth}
         \centering
         \includegraphics[scale=0.25]{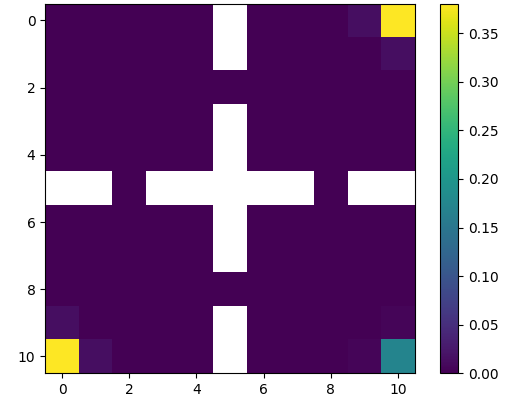}
     \end{subfigure}
     \begin{subfigure}[h]{0.23\textwidth}
         \centering
         \input{experiment1/multi_obj/objective_function_step40_noise_0.05_model_True_noise_True.txt}
     \end{subfigure}
        \caption{Multi-objectives: [left] MD-CURL distribution at $N=40$; [right] Log regret.}
        \label{fig:exp1_multi_obj}
\end{figure}

We now examine Greedy MD-CURL for online CURL. We add a noise $\epsilon_n$ that follows a categorical distribution $h_n$, with a $0.2$ probability of going up and $0$ for other directions. We suppose $g_n$ is known but $h_n$ is unknown, and we take $M=10$. Figure~\ref{fig:exp2_learn_noise} compares the log regret per iteration for Greedy MD-CURL (blue), MD-CURL with known noise dynamics (green), and MD-CURL with unknown noise dynamics, where the learner never learns the noise distribution, i.e. $\hat{p}^t_n(\cdot|x,a) = \delta_{g_n(x,a,0)}$ for all $(x,a)$ (red). We see that Greedy MD-CURL quickly matches MD-CURL with known noise dynamics, and that never learning the noise is sub-optimal. We do not compare Greedy MD-CURL to other algorithms in the literature as none is well-suited to our scenario, and the ones that could be adapted use UCRL techniques making them computationally expensive or intractable.
\begin{figure}
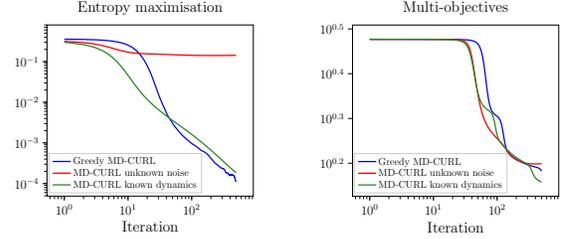

     \centering
     \begin{subfigure}[h]{0.23\textwidth}
         \centering
\input{experiment2/objective_function_step40_noise_0.2_step_40_maxentropy_algo_MD-MFC.txt}
     \end{subfigure}
     \begin{subfigure}[h]{0.23\textwidth}
     \centering
\input{experiment2/objective_function_step40_noise_0.2_step_40_multiobj_algo_MD-MFC.txt}
     \end{subfigure}
        \caption{Log regret per iteration, $N = 40$: [left] Entropy maximisation; [right] Multi-objectives.}
        \label{fig:exp2_learn_noise}
\end{figure}

Finally, Greedy MD-CURL achieves sub-linear regret even with unknown dynamics (see Appendix~\ref{bounds_unknown_g}). Figure~\ref{fig:exp3_learn_p} shows how it learns the full dynamics for both the entropy maximization problem (right) and the multi-objective problem with $0.2$ probability of being perturbed in any reachable neighboring state (left). It exploits the fact that maximizing entropy and perturbations with high probability favors exploration.
\begin{figure}[H]
     \centering
     \begin{subfigure}[h]{0.23\textwidth}
         \centering
\begin{tikzpicture}[scale=0.4]

\definecolor{darkgray176}{RGB}{176,176,176}

\begin{loglogaxis}[
legend style={nodes={scale=0.8, transform shape}},
legend cell align={left},
legend style={fill opacity=0.8, draw opacity=1, text opacity=1, draw=white!80!black, at={(1.15,0.99)},
anchor=north east},
tick align=outside,
tick pos=left,
title={\Large{Entropy maximisation}},
x grid style={darkgray176},
xlabel={\Large{Iteration}},
xtick style={color=black},
tick label style={font=\large}, 
y grid style={darkgray176},
ytick style={color=black}
]
\addplot [very thick, red]
table[y expr=1-\thisrow{values}] {%
t values
0 0.775753891021572
1 0.789377087487765
2 0.804582067614656
3 0.824757445976491
4 0.845199699511137
5 0.865241020377067
6 0.885254104594146
7 0.903948232640846
8 0.919600398259425
9 0.932069563643089
10 0.941661833859077
11 0.948944548484887
12 0.954304076947016
13 0.958254883628021
14 0.961315831714412
15 0.963795888520919
16 0.965635966384273
17 0.967017349150771
18 0.968556519248033
19 0.96992136396739
20 0.971151675955051
21 0.972945167696412
22 0.974921002127651
23 0.977068585869415
24 0.979131661208312
25 0.981445286252038
26 0.983760291665862
27 0.986075497956768
28 0.988278411021784
29 0.990261774543928
30 0.991973931134441
31 0.993427105140056
32 0.99463673916496
33 0.995664487393412
34 0.996469289383898
35 0.9971717223661
36 0.997754394053355
37 0.998219720050772
38 0.998597092164227
39 0.998875112957719
40 0.999088204307579
41 0.999243982858267
42 0.99934851592503
43 0.999411159498303
44 0.999437860172663
45 0.999439608195844
46 0.999418768724889
47 0.999378966102125
48 0.999323425981553
49 0.9992549621318
50 0.999175988121352
51 0.999147177976097
52 0.999140873564967
53 0.999129553190016
54 0.999114803030021
55 0.999135276219068
56 0.999148905348827
57 0.999157671378535
58 0.999162852606972
59 0.999165296244119
60 0.999165576268622
61 0.999164088609293
62 0.999161110554416
63 0.999156838989829
64 0.999151415649919
65 0.999144944131233
66 0.999137501525997
67 0.999129146455754
68 0.999119924650699
69 0.999109872832783
70 0.99909902141588
71 0.999087396376707
72 0.999075020543373
73 0.999061914475356
74 0.999048097057871
75 0.999033585897841
76 0.999018397583253
77 0.999002547849617
78 0.99898605168427
79 0.999061003737676
80 0.999118500154464
81 0.999163651077543
82 0.999199862394163
83 0.999229461945099
84 0.999254075857909
85 0.999274862076781
86 0.999292659484579
87 0.999308085532843
88 0.999321601544074
89 0.999333557195046
90 0.99934422129382
91 0.999353803361655
92 0.999362468946881
93 0.999370350607608
94 0.999377555866561
95 0.99938417302746
96 0.999390275467276
97 0.999395924833112
98 0.99940117344556
99 0.999406066122852
100 0.999410641578965
};
\addlegendentry{\large{Greedy MD-CURL unknown $g$}}
\addplot [very thick, forestgreen]
table[y expr=1-\thisrow{values}] {%
t values
0 0.858023096406266
1 0.876463754204909
2 0.896003620903909
3 0.915613736157719
4 0.934111831459075
5 0.95047301791243
6 0.964075366933002
7 0.974766569358555
8 0.98276775189413
9 0.988508817698306
10 0.992480318432942
11 0.9951385390804
12 0.996862015367511
13 0.997942529114798
14 0.998593815496927
15 0.998966661815079
16 0.999164335983321
17 0.999255785510735
18 0.999285948678397
19 0.999283379898387
20 0.999265686496046
21 0.999243304311133
22 0.999222066487778
23 0.99920492113253
24 0.999193061504991
25 0.999186657525824
26 0.999185320568524
27 0.999188392159726
28 0.99919511795342
29 0.999204748038566
30 0.999216590754239
31 0.99923003780326
32 0.999244572178121
33 0.999259766252026
34 0.999275274654205
35 0.999290824769333
36 0.999306206554812
37 0.999321262640537
38 0.999335879219208
39 0.999349977955287
40 0.999363508974117
41 0.999376444898103
42 0.999388775847168
43 0.999400505298531
44 0.999411646695058
45 0.999422220694889
46 0.999432252963419
47 0.999441772419206
48 0.999450809856466
49 0.999459396877597
50 0.999467565079034
51 0.999475345442717
52 0.999482767893166
53 0.99948986098698
54 0.999496651707292
55 0.9995031653406
56 0.999509425417539
57 0.999515453702544
58 0.999521270220272
59 0.999526893308977
60 0.999532339693038
61 0.999537624568419
62 0.999542761696182
63 0.999547763500242
64 0.999552641166457
65 0.999557404740813
66 0.999562063225088
67 0.999566624668796
68 0.999571096256606
69 0.999575484390699
70 0.999579794767773
71 0.999584032450562
72 0.999588201933868
73 0.999592307205239
74 0.99959635180044
75 0.999600338853969
76 0.999604271144882
77 0.999608151138207
78 0.999611981022247
79 0.999615762742079
80 0.999619498029531
81 0.999623188429944
82 0.999626835325981
83 0.999630439958753
84 0.999634003446518
85 0.99963752680117
86 0.999641010942751
87 0.999644456712178
88 0.999647864882375
89 0.999651236167974
90 0.99965457123375
91 0.999657870701919
92 0.999661135158442
93 0.999664365158439
94 0.999667561230822
95 0.999670723882245
96 0.999673853600454
97 0.999676950857105
98 0.999680016110134
99 0.999683049805722
100 0.999686052379923
};
\addlegendentry{\large{MD-CURL known dynamics}}
\end{loglogaxis}

\end{tikzpicture}
     \end{subfigure}
     \begin{subfigure}[h]{0.23\textwidth}
     \centering
\begin{tikzpicture}[scale=0.4]

\definecolor{darkgray176}{RGB}{176,176,176}

\begin{loglogaxis}[
legend style={nodes={scale=0.8, transform shape}},
legend cell align={left},
legend style={fill opacity=0.8, draw opacity=1, text opacity=1, draw=white!80!black, at={(0.80,0.23)},
anchor=north east},
tick align=outside,
tick pos=left,
title={\Large{Multi-objectives with central noise}},
x grid style={darkgray176},
xlabel={\Large{Iteration}},
xtick style={color=black},
tick label style={font=\large}, 
y grid style={darkgray176},
ytick style={color=black}
]
\addplot [very thick, red]
table[y expr=-\thisrow{values}] {%
t values
0 -2.97409912129946
1 -2.97392996758267
2 -2.97364744043188
3 -2.97327500752979
4 -2.97283187246578
5 -2.97232343447139
6 -2.97175058238509
7 -2.97111869116665
8 -2.9704272174765
9 -2.96967703375588
10 -2.96886723922463
11 -2.96799486277751
12 -2.96706174612443
13 -2.96606064896959
14 -2.96499225694162
15 -2.9638530830779
16 -2.96263783187259
17 -2.9613426290389
18 -2.95996234815014
19 -2.95849019129022
20 -2.95692150306371
21 -2.95524891454499
22 -2.95346447747784
23 -2.95156356598625
24 -2.94953774088549
25 -2.94737493677429
26 -2.94506314308201
27 -2.94259389686057
28 -2.9399533025359
29 -2.9371260727497
30 -2.93410068697582
31 -2.93086865933176
32 -2.92741662210436
33 -2.92372342856374
34 -2.919771867494
35 -2.91554636632033
36 -2.91103203063061
37 -2.90619448853045
38 -2.90101557572776
39 -2.8954726129857
40 -2.88953906837096
41 -2.88320246859551
42 -2.87643289566903
43 -2.86921532214557
44 -2.86152445539862
45 -2.85333394845822
46 -2.84461791427316
47 -2.83535850398926
48 -2.8255322309433
49 -2.81511703175196
50 -2.80410423449484
51 -2.79246975633045
52 -2.78021228913809
53 -2.76731896402804
54 -2.75381296061599
55 -2.73969238728067
56 -2.72498034624805
57 -2.70969800992351
58 -2.69384668502845
59 -2.67748879414855
60 -2.66069000381792
61 -2.64345078211892
62 -2.62579259957491
63 -2.60774239097912
64 -2.58935961192459
65 -2.57076549179317
66 -2.55204933656469
67 -2.53323790209704
68 -2.51434835215273
69 -2.49545018348548
70 -2.4766019528186
71 -2.45790727785349
72 -2.43938347844198
73 -2.4210944779372
74 -2.40308096954102
75 -2.3853608007524
76 -2.36796802860243
77 -2.35096107482604
78 -2.33433908982286
79 -2.31812142305071
80 -2.30233120378154
81 -2.28697539538095
82 -2.27205194215169
83 -2.25755771874489
84 -2.24352415639219
85 -2.22994054068618
86 -2.21680043503122
87 -2.20407287550204
88 -2.19177209955945
89 -2.17991328576507
90 -2.16848492047251
91 -2.15745883815697
92 -2.14682166715545
93 -2.13657068110845
94 -2.12667646532098
95 -2.1171527246139
96 -2.10799044896304
97 -2.09915777031379
98 -2.09063974903179
99 -2.08243305726996
100 -2.07452194727046
101 -2.06691096503771
102 -2.05957156086473
103 -2.0524922671798
104 -2.04567075507786
105 -2.03911066881234
106 -2.03279071409964
107 -2.02669147625119
108 -2.02080715690442
109 -2.01512906248613
110 -2.00964996357047
111 -2.00436344546523
112 -1.99926278887518
113 -1.99433293311303
114 -1.98957113649103
115 -1.98496320799093
116 -1.98050526230053
117 -1.97618741318166
118 -1.97201426315078
119 -1.967971347783
120 -1.96405153734461
121 -1.96024738362874
122 -1.95655652996266
123 -1.9529708101578
124 -1.94948262425922
125 -1.94608734539894
126 -1.94277999275598
127 -1.9395540783523
128 -1.93641061394146
129 -1.93334199409922
130 -1.93034383972498
131 -1.92741110195914
132 -1.92453983480998
133 -1.92172630440132
134 -1.91897316770243
135 -1.91627376257471
136 -1.91362631732611
137 -1.91102541112052
138 -1.90847153303644
139 -1.90596148737758
140 -1.90348932051652
141 -1.90105770269872
142 -1.89866142148817
143 -1.89629689920951
144 -1.89396697805224
145 -1.89167000063553
146 -1.88940050300053
147 -1.88715917756732
148 -1.88494227304013
149 -1.8827496021494
150 -1.88057930873397
151 -1.87843560253169
152 -1.87631478239694
153 -1.8742158345594
154 -1.87213796863602
155 -1.87008222680018
156 -1.86804461824032
157 -1.86602518609999
158 -1.8640264639072
159 -1.86204472667942
160 -1.86007827193675
161 -1.85813355678945
162 -1.85620798565844
163 -1.85429949925909
164 -1.85240859794989
165 -1.85052928047969
166 -1.84866295592775
167 -1.84681081924525
168 -1.84497634966874
169 -1.8431539668967
170 -1.84134599621646
171 -1.8395537457639
172 -1.8377784204221
173 -1.83602088229933
174 -1.8342799479464
175 -1.83256049503594
176 -1.83086113118988
177 -1.82918367334676
178 -1.82752443456791
179 -1.82588360932223
180 -1.82426397027907
181 -1.8226633596758
182 -1.82108426094111
183 -1.81952511531445
184 -1.81798396635249
185 -1.81646135118286
186 -1.81495935593782
187 -1.81347948290329
188 -1.81201952579748
189 -1.81058272904101
190 -1.80916941055117
191 -1.80777972758717
192 -1.80641029336389
193 -1.80506517719129
194 -1.80373886705127
195 -1.80243577427793
196 -1.80115582928227
197 -1.79989937618044
198 -1.79866492002099
199 -1.79745361049639
200 -1.79626286040264
201 -1.79509285007306
202 -1.79394495266927
203 -1.79281915762134
204 -1.79171482435124
205 -1.79062808290221
206 -1.78956125366574
207 -1.78851453615893
208 -1.78748797684648
209 -1.78648166086507
};
\addlegendentry{\large{Greedy MD-CURL unknown $g$}}
\addplot [very thick, forestgreen]
table[y expr=-\thisrow{values}] {%
t values
0 -2.97409912129946
1 -2.97282481606283
2 -2.97146302618436
3 -2.97000646693291
4 -2.96844718313238
5 -2.96677648797607
6 -2.96498489759644
7 -2.96306206155802
8 -2.96099668961431
9 -2.95877647529441
10 -2.95638801717052
11 -2.95381673901552
12 -2.95104681050025
13 -2.94806107061473
14 -2.94484095663299
15 -2.94136644218458
16 -2.93761598884396
17 -2.93356651659304
18 -2.92919339952795
19 -2.92447049422786
20 -2.91937020921806
21 -2.91386362484947
22 -2.90792067356068
23 -2.90151039073003
24 -2.89460124597689
25 -2.88716156362375
26 -2.87916003886598
27 -2.87056635281597
28 -2.86135188485006
29 -2.85149051454817
30 -2.84095949808966
31 -2.82974039556858
32 -2.81782001686931
33 -2.80519134529888
34 -2.79185439111178
35 -2.77781692251838
36 -2.76309502084341
37 -2.74771341009224
38 -2.73170551976627
39 -2.71511325323056
40 -2.69798645146338
41 -2.68038206211491
42 -2.6623630444461
43 -2.64399705962937
44 -2.62535501089491
45 -2.60650950738826
46 -2.58753332840346
47 -2.56849796080554
48 -2.54947227275791
49 -2.53052137281513
50 -2.5117056869142
51 -2.4930802687548
52 -2.47469434323327
53 -2.45659106929141
54 -2.43880749853532
55 -2.42137469952724
56 -2.40431801457192
57 -2.38765741561923
58 -2.37140792792385
59 -2.35558009363693
60 -2.3401804518919
61 -2.3252120166365
62 -2.3106747380333
63 -2.29656593741788
64 -2.28288070940014
65 -2.26961228765133
66 -2.25675237323795
67 -2.24429142609153
68 -2.23221892141412
69 -2.22052357359873
70 -2.20919353067708
71 -2.19821654247465
72 -2.18758010562469
73 -2.17727158842824
74 -2.1672783382951
75 -2.15758777419878
76 -2.14818746625621
77 -2.13906520422214
78 -2.13020905638336
79 -2.12160742005919
80 -2.11324906466875
81 -2.10512316811404
82 -2.09721934705256
83 -2.08952768149294
84 -2.08203873403935
85 -2.07474356403356
86 -2.06763373679326
87 -2.06070132811908
88 -2.05393892423607
89 -2.04733961734536
90 -2.04089699698354
91 -2.03460513741858
92 -2.02845858134708
93 -2.02245232019686
94 -2.0165817713775
95 -2.01084275285838
96 -2.00523145548691
97 -1.99974441348808
98 -1.99437847360875
99 -1.98913076338612
100 -1.9839986590276
101 -1.97897975338903
102 -1.974071824528
103 -1.96927280528878
104 -1.96458075434417
105 -1.95999382907767
106 -1.95551026063686
107 -1.95112833142778
108 -1.9468463552515
109 -1.94266266021126
110 -1.93857557444433
111 -1.9345834146599
112 -1.93068447739666
113 -1.9268770328533
114 -1.92315932109458
115 -1.91952955039623
116 -1.9159858974642
117 -1.91252650924839
118 -1.90914950606594
119 -1.90585298575496
120 -1.90263502859309
121 -1.89949370273547
122 -1.89642706995191
123 -1.89343319147078
124 -1.8905101337664
125 -1.88765597415624
126 -1.88486880610188
127 -1.88214674413427
128 -1.87948792834711
129 -1.87689052842316
130 -1.87435274717584
131 -1.87187282360332
132 -1.86944903546407
133 -1.86707970139231
134 -1.86476318257831
135 -1.86249788404377
136 -1.86028225554512
137 -1.85811479213953
138 -1.85599403444873
139 -1.85391856865514
140 -1.85188702626386
141 -1.84989808366213
142 -1.84795046150595
143 -1.84604292396125
144 -1.84417427782462
145 -1.84234337154622
146 -1.84054909417511
147 -1.83879037424494
148 -1.83706617861599
149 -1.83537551128717
150 -1.83371741219009
151 -1.83209095597547
152 -1.83049525080058
153 -1.82892943712511
154 -1.82739268652163
155 -1.8258842005057
156 -1.82440320938971
157 -1.82294897116376
158 -1.82152077040605
159 -1.82011791722492
160 -1.81873974623359
161 -1.81738561555883
162 -1.81605490588399
163 -1.81474701952655
164 -1.81346137955022
165 -1.81219742891128
166 -1.81095462963868
167 -1.80973246204731
168 -1.80853042398366
169 -1.80734803010307
170 -1.80618481117772
171 -1.80504031343434
172 -1.80391409792062
173 -1.80280573989946
174 -1.80171482826989
175 -1.80064096501369
176 -1.79958376466677
177 -1.79854285381413
178 -1.79751787060764
179 -1.79650846430543
180 -1.79551429483217
181 -1.79453503235918
182 -1.79357035690351
183 -1.79261995794522
184 -1.79168353406187
185 -1.79076079257957
186 -1.78985144923977
187 -1.78895522788101
188 -1.78807186013502
189 -1.78720108513639
190 -1.78634264924523
191 -1.78549630578222
192 -1.78466181477534
193 -1.78383894271792
194 -1.78302746233724
195 -1.78222715237335
196 -1.78143779736754
197 -1.78065918745997
198 -1.77989111819613
199 -1.77913339034159
200 -1.77838580970465
201 -1.77764818696662
202 -1.77692033751923
203 -1.77620208130891
204 -1.77549324268757
205 -1.77479365026965
206 -1.77410313679497
207 -1.77342153899729
208 -1.77274869747823
209 -1.77208445658626
};
\addlegendentry{\large{MD-CURL known dynamics}}
\end{loglogaxis}

\end{tikzpicture}
     \end{subfigure}
        \caption{Log regret for Greedy MD-CURL with unknown $g_n$ and $h_n$: [left] Entropy maximisation; [right] Multi-objectives.}
        \label{fig:exp3_learn_p}
\end{figure}
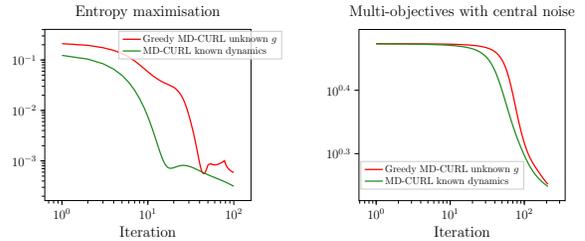
\section{Conclusion and future works}
In this paper we analyzed two versions of the CURL problem in episodic MDPs with finite state and action spaces. For the offline optimization problem, where the dynamics $g_n$ and $h_n$ are known, we proposed an algorithm based on mirror descent converging with a rate of $O\big(1/\sqrt{K}\big)$ for K iterations. For the online learning extension with adversarial costs, we proposed an algorithm with a simple closed-form solution, and regret of $O\big(\sqrt{T} \log(T)\big)$ when $g_n$ is known but $h_n$ is unknown. Also, we showed that for this specific dynamic, we can improve the bounds of existing work and pay the number of states only in a term proportional to $\sqrt{\log(|\mathcal{X}|})$.

A future direction is to investigate if we can achieve optimal regrets for variants of Greedy MD-CURL under more general assumptions about the available dynamics information. For example, by considering the case where $g_n$ is a parametric function with unknown parameters, rather than being completely known.

\bibliography{bib}

\onecolumn
\appendix
\section{Equivalence between policies and distributions in $\mathcal{M}_{\mu_0}$}\label{missing_results}

\begin{proposition}\label{opt_mu_equal_pi}
Let $\mu_0 \in \Delta_{\mathcal{X} \times \mathcal{A}}$. The application ${\pi \mapsto \mu^{\pi}}$ is a bijection from $(\Delta_\mathcal{A})^{\mathcal{X} \times N}$ to $\mathcal{M}_{\mu_0}$.
\end{proposition}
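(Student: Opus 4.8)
The plan is to exhibit an explicit inverse of the map $\pi \mapsto \mu^\pi$ and to check that the forward map and this candidate inverse land in the correct sets. First I would verify well-definedness, i.e. that $\mu^\pi \in \mathcal{M}_{\mu_0}$ for every $\pi \in (\Delta_\mathcal{A})^{\mathcal{X}\times N}$, by induction on $n$ using the recursion~\eqref{mu_induced_pi}. Assuming $\mu_{n-1}^\pi \in \Delta_{\mathcal{X}\times\mathcal{A}}$, nonnegativity of $\mu_n^\pi$ is immediate; summing over $(x',a')$ and using $\sum_{x'} p_n(x'|x,a)=1$ together with $\sum_{a'}\pi_n(a'|x')=1$ shows $\mu_n^\pi$ sums to one, while summing over $a'$ only reproduces exactly the Bellman-flow constraint defining $\mathcal{M}_{\mu_0}$ in~\eqref{set_convex_mu}.

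For surjectivity I would construct the inverse directly. Given $\mu \in \mathcal{M}_{\mu_0}$, set $\rho_n(x') := \sum_{a'} \mu_n(x',a')$, which by the flow constraint equals $\sum_{x,a} p_n(x'|x,a)\mu_{n-1}(x,a)$, and define $\pi_n(a'|x') := \mu_n(x',a')/\rho_n(x')$ whenever $\rho_n(x')>0$ and arbitrarily (say uniformly over $\mathcal{A}$) otherwise, so that each $\pi_n(\cdot|x')$ is a valid element of $\Delta_\mathcal{A}$. A second induction then gives $\mu^\pi = \mu$: assuming $\mu_{n-1}^\pi = \mu_{n-1}$, the recursion yields $\mu_n^\pi(x',a') = \rho_n(x')\,\pi_n(a'|x')$, which equals $\mu_n(x',a')$ both when $\rho_n(x')>0$ (by construction) and when $\rho_n(x')=0$ (since then $\mu_n(x',\cdot)\equiv 0$).

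For injectivity, suppose $\mu^\pi = \mu^{\pi'}$. The induced state marginals $\rho_n$ are determined by the shared occupancy measure, and the identity $\rho_n(x')\,\pi_n(a'|x') = \mu_n^\pi(x',a') = \mu_n^{\pi'}(x',a') = \rho_n(x')\,\pi'_n(a'|x')$ forces $\pi_n(\cdot|x')=\pi'_n(\cdot|x')$ at every pair $(n,x')$ with $\rho_n(x')>0$.

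The main obstacle is precisely the behaviour on states carrying no mass: if $\rho_n(x')=0$ then $\pi_n(\cdot|x')$ is left completely unconstrained by $\mu^\pi$, so two policies differing only at such unreachable states induce the same occupancy measure. The clean way I would resolve this is to read the correspondence as a bijection onto $\mathcal{M}_{\mu_0}$ once a canonical convention (e.g. uniform) is fixed on zero-marginal states, or equivalently up to the equivalence of policies agreeing on the visited support; this is harmless for everything that follows because $\Gamma$ in~\eqref{gamma}, and every objective term, weights $\log\bigl(\pi_n(a|x)/\pi'_n(a|x)\bigr)$ by $\mu_n^\pi(x,a)$, so the choice on zero-mass states never enters any quantity used later. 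I expect the bulk of the work to be the two short inductions, with all the conceptual care concentrated in this zero-marginal bookkeeping.
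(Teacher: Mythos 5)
Your proposal is correct and follows essentially the same route as the paper's proof: the explicit inverse $\pi_n(a|x) = \mu_n(x,a)/\rho_n(x)$ on positive-mass states, verified by induction on $n$, with a uniform convention $\pi_n(a|x)=1/|\mathcal{A}|$ on zero-marginal states. If anything you are more thorough than the paper, which only carries out the surjectivity induction and resolves the zero-mass ambiguity by the same uniform convention, leaving the forward inclusion $\mu^\pi \in \mathcal{M}_{\mu_0}$ and the explicit injectivity argument implicit.
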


\begin{proof}
Consider a fixed initial state-action distribution $\mu_0 \in \Delta_{\mathcal{X} \times \mathcal{A}}$. Let $\mu \in \mathcal{M}_{\mu_0}$ and define $\rho = (\rho_n)_{0 \leq n \leq N}$ such that for all $x \in \mathcal{X}$, $\rho_n(x) = \sum_{a} \mu_n(x,a)$  (the associated state distribution). First, let us deal with the case where $\rho_n(x) \neq 0$. Define a policy sequence $\pi \in (\Delta_\mathcal{A})^{\mathcal{X} \times N}$ such that $\smash{\pi_{n}(a|x) = \frac{\mu_{n}(x,a)}{\rho_{n}(x)}}$ for all $(x,a) \in \mathcal{X} \times \mathcal{A}$. We want to show that $\mu^\pi = \mu$ for this policy $\pi$. We reason by induction. For $n=0$, $\mu_0^\pi = \mu_0$ by definition. Suppose $\mu^\pi_n = \mu_n$, thus for $n+1$ and for all $(x',a') \in \mathcal{X} \times \mathcal{A}$ 
    \begin{align*}
        \mu^\pi_{n+1}(x',a') &= \sum_{x,a} p_{n+1}(x'|x,a) \mu_n^\pi(x,a) \pi_{n+1}(a'|x') \\
        &= \sum_{x,a} p_{n+1}(x'|x,a) \mu_n(x,a) \frac{\mu_{n+1}(x',a')}{\rho_{n+1}(x')} \\
        &= \sum_{a} \mu_{n+1}(x',a) \frac{\mu_{n+1}(x',a')}{\rho_{n+1}(x')} \\
        &= \rho_{n+1}(x') \frac{\mu_{n+1}(x',a')}{\rho_{n+1}(x')} \\
        &= \mu_{n+1}(x',a'),
    \end{align*}
where the first equality comes from Equation~\eqref{mu_induced_pi}, the second equality comes from the induction assumption and the way we defined the strategy $\pi$, and the third comes from the assumption that $\mu \in \mathcal{M}_{\mu_0}$.

In the case $\rho_n(x) = 0$, we therefore have $\mu_n(x,a) = 0$ for all $a \in \mathcal{A}$, so any choice of $\pi_n(a|x)$ would work. Because we want to make sure that there is a unique mapping from each $\pi$ to $\mu^\pi$ we agree to always set $\pi_n(a|x) = \frac{1}{|\mathcal{A}|}$ in this case, where $|\mathcal{A}|$ is the number of possible actions.

\end{proof}


\section{Proofs of Section~\ref{algorithm_knownp}: algorithm~\ref{alg:MD} scheme and convergence rate}\label{missing_proofs_2}

By abuse of notation, for any probability measure $\eta \in \Delta_E$ whatever the finite space $E$ on which it is defined we introduce the neg-entropy function, with the convention $0 \log(0) = 0$,
\begin{equation}\label{neg-entropy}
\phi (\eta):=\sum_{x\in E} \eta (x)\log \eta (x),    
\end{equation}

 to which we associate the Bregman divergence $D$, also known as the KL divergence, such that for any pair $(\eta,\nu)\in\Delta_E\times \Delta_E$, 
$$
D(\eta,\nu):=\phi(\eta)-\phi(\nu)-\langle \nabla \phi(\nu),\eta-\nu\rangle.
$$

Let $\rho_n$ denote the marginal probability distribution on $\mathcal{X}$ associated with $\mu_n$ i.e., for all $x \in \mathcal{X}$
$$
\rho_n(x):= \sum_{a \in \mathcal{A}} \mu_n(x,a)\ .
$$
Observe that to any $\mu=(\mu_n)_{1 \leq n \leq N}\in \mathcal{M}_{\mu_0}$ one can associate a unique probability mass function on $\Delta_{(\mathcal{X} \times \mathcal{A})^N}$ denoted by $\mu_{1:N}$ such that $\mu_{1:N}$ is \textit{generated} by the strategy $\pi=(\pi_n)_{ n \in [N]}$ associated with $\mu$ which is determined by 
$$
\pi_{n}(a\vert x)=\frac{\mu_n(x,a)}{\rho_n(x)}\ ,
$$
when $\rho_n(x) \neq 0$, otherwise we fix an arbitrary strategy $\pi_n(a \vert x) = \frac{1}{|\mathcal{A}|}$.

Before proving Theorems~\ref{MD_explicit_result} and \ref{thm:convergence_rate} we state and prove a lemma which is key to proving both theorems.

\begin{lemma}\label{prop:D_decomp}
For any $\mu \in \mathcal{M}_{\mu_0}$ and $\mu' \in \mathcal{M}_{\mu_0}^*$, with associated probability mass functions $\mu_{1:N},\mu'_{1:N}\in \Delta_{(\mathcal{X} \times \mathcal{A})^N} $ \textit{generated} by $\pi,\pi'$ respectively with the same initial state-action distribution, i.e. $\mu_0 = \mu'_0$, we have
\begin{equation}\label{eq:youpi}
\begin{split}
D(\mu_{1:N},\mu'_{1:N}) =\Gamma(\mu, \mu') ={\displaystyle \sum_{n=1}^{N} D(\mu_n,\mu'_n)- \sum_{n=1}^{N} D(\rho_n,\rho'_n)},
\end{split}
\end{equation}
where $\Gamma(\mu, \mu') := \sum_{n=1}^{N} \mathbb{E}_{(x,a) \sim \mu_n(\cdot)}\left[\log\bigg(\frac{\pi_{n}(a|x)}{\pi'_{n}(a|x)}\bigg)\right].$
\end{lemma}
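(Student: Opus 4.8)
The plan is to establish the two equalities separately, using different tools: the rightmost one is pure algebra on the policy factorization, while the leftmost one is probabilistic and rests on the Markov factorization of the trajectory law.

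First I would prove $\Gamma(\mu,\mu') = \sum_{n=1}^N D(\mu_n,\mu'_n) - \sum_{n=1}^N D(\rho_n,\rho'_n)$ by direct substitution. By the bijection of Proposition~\ref{opt_mu_equal_pi} we have $\pi_n(a|x) = \mu_n(x,a)/\rho_n(x)$ and $\pi'_n(a|x) = \mu'_n(x,a)/\rho'_n(x)$, and since $\mu' \in \mathcal{M}_{\mu_0}^*$ all denominators $\pi'_n(a|x)$ are nonzero, so the log-ratio splits as
$$
\log\frac{\pi_n(a|x)}{\pi'_n(a|x)} = \log\frac{\mu_n(x,a)}{\mu'_n(x,a)} - \log\frac{\rho_n(x)}{\rho'_n(x)}.
$$
Taking the expectation under $\mu_n$, the first summand gives $D(\mu_n,\mu'_n)$ — after checking once that the Bregman divergence $D$ induced by $\phi$ coincides with the usual $\mathrm{KL}$ (the linear terms cancel because $\sum \mu_n = \sum \mu'_n = 1$) — while the second collapses to $D(\rho_n,\rho'_n)$ after marginalizing the action via $\sum_a \mu_n(x,a) = \rho_n(x)$. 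Summing over $n$ yields the identity, with possible zeros of $\mu_n$ absorbed by the convention $0\log 0 = 0$.

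Next, for $D(\mu_{1:N},\mu'_{1:N}) = \Gamma(\mu,\mu')$, I would write the joint trajectory masses explicitly through their Markov factorization: each of $\mu_{1:N},\mu'_{1:N}$ factors as $\mu_0$ times the product over $n$ of $p_n(x_n|x_{n-1},a_{n-1})\,\pi_n(a_n|x_n)$. Because both laws share the same initial distribution (as $\mu_0 = \mu'_0$) and the same kernel $p$, the ratio $\mu_{1:N}/\mu'_{1:N}$ telescopes to $\prod_{n=1}^N \pi_n(a_n|x_n)/\pi'_n(a_n|x_n)$. Writing $D$ as the expected log-ratio under $\mu_{1:N}$ and using linearity of expectation, the step-$n$ term depends only on $(x_n,a_n)$, whose marginal under the joint law is exactly $\mu_n$; this recovers $\Gamma(\mu,\mu')$.

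The main obstacle is the bookkeeping in this second equality: making the cancellation of the $\mu_0$ and transition factors fully rigorous forces me to be explicit that $D(\mu_{1:N},\mu'_{1:N})$ is the $\mathrm{KL}$ over the \emph{full} trajectory and to invoke $\mu' \in \mathcal{M}_{\mu_0}^*$ so that no logarithm of zero appears in any denominator, again leaning on $0\log 0 = 0$ to handle the zeros that $\mu$ may carry.
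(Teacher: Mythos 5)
Your proposal is correct and follows essentially the same route as the paper's proof: the Markov factorization of the trajectory laws with cancellation of the shared kernel $p$ and of $\mu_0$, marginalization of the step-$n$ log-ratio onto $\mu_n$ to obtain $\Gamma$, and the substitution $\pi_n = \mu_n/\rho_n$ to split $\Gamma$ into $\sum_n D(\mu_n,\mu'_n) - \sum_n D(\rho_n,\rho'_n)$. The only difference is cosmetic — you prove the two equalities in the reverse order and make explicit the (correct) check that the Bregman divergence of the neg-entropy reduces to KL on the simplex, which the paper leaves implicit.
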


\begin{proof}
For each $n \in [N]$, let us define a transition matrix $P^{\pi_n}$ for all $x, x' \in \mathcal{X}$ and $a, a' \in \mathcal{A}$,
\begin{equation*}
    P^{\pi_n}(x',a'|x,a) := p_n(x'|x,a) \pi_n(a'|x').
\end{equation*}
Given Definition~\ref{mu_induced_pi}, for any randomized policy the state-action distributions evolve according to linear dynamics
\begin{equation*}
\mu_n(x',a') = \langle \mu_{n-1}(\cdot), P^{\pi_n}(x',a'| \cdot) \rangle.
\end{equation*}
Any randomized policy $\pi$ induces a probability mass function $\mu_{1:N}$  that is Markovian:
\begin{equation}\label{mu_matrix_decomp}
    \mu_{1:N}(\vec{y}) = \mu_0(y_0) P^{\pi_1}(y_1|y_0) \ldots P^{\pi_N}(y_N|y_{N-1}),
\end{equation}
where $\vec{y}$ represents the elements of $(\mathcal{X} \times \mathcal{A})^{N+1}$ such that $y_i = (x_i,a_i)$ for all $0\leq i \leq N$. Note that $\mu_n(y_n)$ is the marginal probability mass function. 

Consider $\mu, \mu' \in \mathcal{M}_{\mu_0}$ the state-action distribution sequences induced by $\pi, \pi'$ respectively (i.e, $\mu = \mu^\pi$ and $\mu' = \mu^{\pi'})$. Thus, computing the relative entropy between the probability mass functions $\mu_{1:N},\mu'_{1:N}$ gives
\begin{align*}
    D(\mu_{1:N}, \mu'_{1:N}) &= \sum_{\vec{y}} \mu_{1:N}(\vec{y}) \log\left(\frac{\mu_{1:N}(\vec{y})}{\mu'_{1:N}(\vec{y})}\right) \\
    &= \sum_{y_0,\ldots,y_N} \mu_{1:N}(\vec{y}) \log\left(\frac{\mu_0(y_0) P^{\pi_1}(y_1|y_0) \ldots P^{\pi_N}(y_N|y_{N-1})}{\mu'_0(y_0) P^{\pi'_1}(y_1|y_0) \ldots P^{\pi'_N}(y_N|y_{N-1})}\right) \\
    &= \sum_{y_0,\ldots,y_N} \mu_{1:N}(\vec{y}) \sum_{i=1}^N\log\left(\frac{P^{\pi_i}(y_i|y_{i-1})}{P^{\pi'_i}(y_i|y_{i-1})}\right).
\end{align*}

Where
\begin{align*}
    \sum_{i=1}^N\log\left(\frac{P^{\pi_i}(y_i|y_{i-1})}{P^{\pi'_i}(y_i|y_{i-1})}\right) &= \sum_{i=1}^N\log\left( \frac{p_i(x_i|x_{i-1},a_{i-1}) \pi_i(a_i|x_i)}{p_i(x_i|x_{i-1},a_{i-1}) \pi_i'(a_{i}|x_i)} \right) \\
    &= \sum_{i=1}^N \log\left( \frac{ \pi_i(a_i|x_i)}{\pi_i'(a_{i}|x_i)} \right).
\end{align*}

Thus,
\begin{align*}
    D(\mu_{1:N}, \mu'_{1:N}) &= \sum_{\vec{y}} \mu_{1:N}(\vec{y}) \sum_{i=1}^N \log\left( \frac{ \pi_i(a_i|x_i)}{\pi_i'(a_{i}|x_i)} \right) \\
    &= \sum_{\vec{y}} \mu_0(y_0) P^{\pi_1}(y_1|y_0) \ldots P^{\pi_N}(y_N|y_{N-1}) \sum_{i=1}^N \log\left( \frac{ \pi_i(a_i|x_i)}{\pi_i'(a_{i}|x_i)} \right) \\
    &= \sum_{i=1}^N \sum_{x \in \mathcal{X}}\sum_{a \in \mathcal{A}} \mu_i(x,a) \log\left(\frac{\pi_i(a|x)}{\pi'_i(a|x)}\right).
\end{align*}

Where for the last equality we used that
\[
    \sum_{y_0,\ldots,y_{i-1}} \mu_0(y_0) P^{\pi_1}(y_1|y_0)\ldots P^{\pi_{i}}(y_{i}| y_{i-1}) = \sum_{y_{i}} \mu_{i}(y_{i}) 
\]
and for a fixed $y_i$,
\[
    \sum_{y_{i+1},\ldots,y_N} P^{\pi_{i+1}}(y_{i+1}|y_{i})\ldots
    P^{\pi_{N}}(y_{N}| y_{N-1}) = 1.
\]

This proves the first equality of the lemma. We now prove the second. For this, we recall that Proposition~\ref{opt_mu_equal_pi} gives a unique relation between a state-action distribution sequence $\mu \in \mathcal{M}_{\mu_0}$ and the policy sequence $\pi \in (\Delta_\mathcal{A})^{\mathcal{X} \times N}$ inducing it by taking for all $1 \leq i \leq N$, $(x,a) \in \mathcal{X} \times \mathcal{A}$, 
$$
\pi_{i}(a\vert x)=\frac{\mu_i(x,a)}{\rho_i(x)},\,
$$
where $\rho$ is the marginal on the states of $\mu$.
Using this relation, we have then that
\begin{equation*}
    \begin{split}
         D(\mu_{1:N},\mu'_{1:N}) &=  \sum_{i=1}^{N} \sum_{x \in \mathcal{X}} \sum_{a \in \mathcal{A}} \mu_{i}(x,a) \log\bigg(\frac{\pi_{i}(a|x)}{\pi'_{i}(a|x)}\bigg) \\
         &=  \sum_{i=1}^{N} \sum_{x \in \mathcal{X}} \sum_{a \in \mathcal{A}} \mu_{i}(x,a) \log\bigg(\frac{\mu_{i}(a|x)}{\rho_{i}(x)} \frac{\rho'_{i}(x)}{\mu'_{i}(a|x)}\bigg) \\
         &= \sum_{i=1}^{N} \sum_{x \in \mathcal{X}} \sum_{a \in \mathcal{A}} \mu_{i}(x,a) \log\bigg(\frac{\mu_{i}(a|x)}{\mu'_{i}(a|x)}\bigg) - \sum_{i=1}^{N} \sum_{x \in \mathcal{X}} \sum_{a \in \mathcal{A}} \mu_{i}(x,a) \log\bigg(\frac{\rho_{i}(x)}{\rho'_{i}(x)}\bigg) \\
         &=  \sum_{i=1}^{N} \sum_{x \in \mathcal{X}} \sum_{a \in \mathcal{A}} \mu_{i}(x,a) \log\bigg(\frac{\mu_{i}(a|x)}{\mu'_{i}(a|x)}\bigg) - \sum_{i=1}^{N} \sum_{x \in \mathcal{X}} \rho_{i}(x) \log\bigg(\frac{\rho_{i}(x)}{\rho'_{i}(x)}\bigg) \\
         &= \sum_{i=1}^{N} D(\mu_i, \mu_i') - \sum_{i=1}^{N} D(\rho_i, \rho_i') 
    \end{split}
\end{equation*}
which concludes the proof.
\end{proof}

\subsection{Proof of Theorem \ref{MD_explicit_result}: formulation of Algorithm~\ref{alg:MD}}\label{thm_1_proof}
\begin{theorem*}
Let $k \geq 0$. The solution of Problem~\eqref{MD_iterative_scheme} is $\mu^{k+1} = \mu^{\pi^{k+1}}$, where for all $n \in [N]$, and $(x,a) \in \mathcal{X} \times \mathcal{A}$, 
\begin{equation*}
    \pi_{n}^{k+1}(a|x) := \frac{\pi_{n}^{k}(a|x) \exp\left(\tau_k \tilde{Q}_{n}^{k}(x,a) \right)}{\sum_{a' \in \mathcal{A}}\pi_{n}^{k}(a'|x) \exp\left(\tau_k \tilde{Q}_{n}^{k}(x,a') \right)},
\end{equation*}
where $\tilde{Q}$ is a regularized $Q$-function satisfying the following recursion
\begin{equation*}
    \begin{cases}
        \Tilde{Q}^k_N(x,a) = -\nabla f_N(\mu_N^k)(x,a) \\
        \!\begin{aligned}
            &\Tilde{Q}^k_n(x,a) = \max_{\pi_{n+1} \in (\Delta_\mathcal{A})^{\mathcal{X}}} \Bigg\{ -\nabla f_n(\mu_n^k)(x,a) +\\
            & \qquad \sum_{x'} p_{n+1}(x'|x,a) \sum_{a'} \pi_{n+ 1}(a'|x') \bigg[  - \frac{1}{\tau_k}\log\left( \frac{\pi_{n+1}(a'|x')}{\pi_{n+1}^k(a'|x')}\right) 
             + \Tilde{Q}^k_{n+1}(x',a') \bigg] \Bigg\}.
        \end{aligned}
    \end{cases}
\end{equation*}
\end{theorem*}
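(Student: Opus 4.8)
The plan is to exploit the bijection between policy sequences and flow-constrained occupancy measures (Proposition~\ref{opt_mu_equal_pi}) to turn the constrained problem~\eqref{MD_iterative_scheme} into an \emph{unconstrained} optimal-control problem over policies, and then solve it by backward dynamic programming. First I would rewrite the objective as a single trajectory expectation. Since $\langle \nabla F(\mu^k),\mu^\pi\rangle = \sum_{n=1}^N \mathbb{E}_{(x,a)\sim\mu_n^\pi}[\nabla f_n(\mu_n^k)(x,a)]$ and, by~\eqref{gamma}, $\Gamma(\mu^\pi,\mu^k) = \sum_{n=1}^N \mathbb{E}_{(x,a)\sim\mu_n^\pi}[\log(\pi_n(a|x)/\pi_n^k(a|x))]$, the quantity minimised in~\eqref{MD_iterative_scheme} equals
\[
\sum_{n=1}^N \mathbb{E}_{(x,a)\sim\mu_n^\pi}\Big[\nabla f_n(\mu_n^k)(x,a) + \tfrac{1}{\tau_k}\log\tfrac{\pi_n(a|x)}{\pi_n^k(a|x)}\Big].
\]
The crucial observation is that, because $F$ was linearised at the \emph{fixed} iterate $\mu^k$, the reward $-\nabla f_n(\mu_n^k)$ is a constant vector, and the only coupling across steps is through the flow dynamics~\eqref{mu_induced_pi}. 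Hence this is a genuine finite-horizon MDP with per-step reward $-\nabla f_n(\mu_n^k)(x,a)$ and a relative-entropy penalty toward the reference policy $\pi^k$, and minimising over $\mu\in\mathcal{M}_{\mu_0}$ is, via the bijection, the same as maximising the negative of this quantity over all policy sequences.

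Next I would apply the principle of optimality backward. Defining $\tilde{Q}_n^k(x,a)$ as the optimal reward-to-go that collects the immediate reward $-\nabla f_n(\mu_n^k)(x,a)$ plus all future rewards and penalties incurred from step $n+1$ on, the terminal case $\tilde{Q}_N^k=-\nabla f_N(\mu_N^k)$ is immediate (no future), and pushing the $\max$ over $\pi_{n+1}$ through the sum over next states---legitimate because the conditionals $\pi_{n+1}(\cdot|x')$ decouple across $x'$ and $\tilde{Q}_{n+1}^k$ does not depend on $\pi_{n+1}$---yields exactly the recursion~\eqref{Bellman_Q_tilde}. The per-state inner problem is, for each $x'$,
\[
\max_{\pi_{n+1}(\cdot|x')\in\Delta_\mathcal{A}} \sum_{a'}\pi_{n+1}(a'|x')\Big[\tilde{Q}_{n+1}^k(x',a') - \tfrac{1}{\tau_k}\log\tfrac{\pi_{n+1}(a'|x')}{\pi_{n+1}^k(a'|x')}\Big],
\]
a strictly concave KL-regularised linear program over the simplex. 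A Lagrange-multiplier computation for the constraint $\sum_{a'}\pi_{n+1}(a'|x')=1$ gives the stationarity condition $\log(\pi_{n+1}(a'|x')/\pi_{n+1}^k(a'|x')) = \tau_k\tilde{Q}_{n+1}^k(x',a') - \mathrm{const}$, whose normalised solution is precisely the exponential twist of~\eqref{policy_update}; strict concavity makes this the unique maximiser.

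Assembling the backward induction then proves the claim: the collection of Gibbs policies $\pi^{k+1}$ is globally optimal for the control problem, and by the bijection its induced flow $\mu^{\pi^{k+1}}$ solves~\eqref{MD_iterative_scheme}. I expect the main obstacle to be the careful bookkeeping that makes the dynamic-programming decomposition valid---specifically, verifying that the penalty attached to the step-$n$ policy is correctly accounted for in the recursion for $\tilde{Q}_{n-1}^k$ (so that $\tilde{Q}_N^k$ carries no penalty term), and confirming that each Gibbs update preserves full policy support, which keeps $\mu^{k+1}\in\mathcal{M}_{\mu_0}^*$ and hence keeps $\Gamma$ well defined for the next iteration. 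Optionally, one can sidestep any concern about local-versus-global optimality by noting that $\Gamma$ is a Bregman divergence (Proposition~\ref{penalization_is_bregman}), so the objective is strictly convex and the stationary point produced by the recursion is automatically the unique global minimiser.
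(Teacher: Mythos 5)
Your proposal is correct and follows essentially the same route as the paper's proof: both convert the constrained problem over $\mathcal{M}_{\mu_0}$ into an unconstrained policy optimization via the bijection of Proposition~\ref{opt_mu_equal_pi}, observe that the linearization at the fixed iterate $\mu^k$ yields a genuine finite-horizon MDP with constant rewards $-\nabla f_n(\mu_n^k)$ plus a KL penalty toward $\pi^k$, apply the optimality principle backward to obtain the recursion for $\tilde{Q}^k$, and solve each per-state simplex-constrained subproblem by a Lagrange-multiplier computation to arrive at the exponential-twist update. Your attention to the index bookkeeping (the penalty for the step-$(n+1)$ policy being folded into $\tilde{Q}_n^k$, so that $\tilde{Q}_N^k$ carries no penalty) matches exactly the re-indexing the paper performs when rewriting $\Gamma$ through the Bellman flow.
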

\begin{proof}

At each iteration we seek to find $\mu^{k+1}$ a minimizer of
\begin{equation}
\label{MD_opt_problem_appendix}
\begin{split}
\min_{\mu^\pi\in\mathcal{M}_{\mu_0}}
\bigg\{
\langle \nabla F(\mu^k),\mu^\pi\rangle 
+\frac{1}{\tau_k} \sum_{n=1}^{N} \mathbb{E}_{(x,a) \sim \mu_n(\cdot)}\left[\log\bigg(\frac{\pi_{n}(a|x)}{\pi^k_{n}(a|x)}\bigg)\right]
\bigg\}\,
\end{split}
\end{equation}

where recall that $\langle \nabla F(\mu^k),\mu^\pi\rangle := \sum_{n=1}^N \langle \nabla f_n(\mu_n^k), \mu^\pi_n \rangle$. We further use that ${r_n(x_n, a_n, \mu_n) := - \nabla f_n(\mu_n)(x_n, a_n)}$.

Now, we use the optimality principle to solve this optimization problem with an algorithm backwards in time. Remember that the initial distribution $\mu_0$ is always fixed. The equivalence between solving a minimization problem on sequences of state-action distributions in $\mathcal{M}_{\mu_0}$ and on sequences of policies in $(\Delta_\mathcal{A})^{\mathcal{X} \times N}$ (see Proposition~\ref{opt_mu_equal_pi}), allows us to reformulate Problem~\eqref{MD_opt_problem_appendix} on $\mathcal{M}_{\mu_0}$ into a problem on $(\Delta_\mathcal{A})^{\mathcal{X} \times N}$, thus

\begin{equation*}
    \begin{split}
        \eqref{MD_opt_problem_appendix} &= \max_{\pi \in (\Delta_\mathcal{A})^{\mathcal{X} \times N}} \bigg\{\sum_{n=0}^N \sum_{x,a} \mu_n^\pi(x,a) r_n(x,a,\mu_n^k) \\
        &\qquad- \frac{1}{\tau_k} \sum_{n=1}^N \sum_{x,a} \mu_{n-1}^\pi(x,a) \sum_{x',a'} p_n(x'|x,a) \pi_n(a'|x') \log\left( \frac{\pi_n(a'|x')}{\pi_n^k(a'|x')}\right) \bigg\} \\
        &= \max_{\pi \in (\Delta_\mathcal{A})^{\mathcal{X} \times N}} \bigg\{\sum_{n=0}^N \sum_{x,a} \mu^\pi_n(x,a) \bigg[ r_n(x,a,\mu_n^k) \\
        &\qquad - \frac{1}{\tau_k} \sum_{x',a'} p_{n+1}(x'|x,a) \pi_{n+1}(a'|x') \log\left( \frac{\pi_{n+1}(a'|x')}{\pi_{n+1}^k(a'|x')}\right) \bigg] \bigg\}\\
        &= \max_{\pi \in (\Delta_\mathcal{A})^{\mathcal{X} \times N}} \bigg\{\mathbb{E}_\pi \bigg[ r_N(x_N,a_N,\mu_N^k) + \sum_{n=0}^{N-1}  r_n(x_n,a_n,\mu_n^k) \\
        &\qquad- \frac{1}{\tau_k} \sum_{x',a'} p_{n+1}(x'|x_n,a_n) \pi_{n+1}(a'|x') \log\left( \frac{\pi_{n+1}(a'|x')}{\pi_{n+1}^k(a'|x')}\right) \bigg] \bigg\}.
    \end{split}
\end{equation*}

Let us define a regularized version of the state-action value function that we denote by $\Tilde{Q}^k$, such that for all $i \in [N]$, $(x,a) \in \mathcal{X} \times \mathcal{A}$,
\begin{equation}
\begin{split}
    \Tilde{Q}^k_i(x,a) &=  \max_{\pi_{i+1:N} \in (\Delta_\mathcal{A})^{\mathcal{X} \times ({N-i})}} \mathbb{E}_\pi \bigg[ r_N(x_N,a_N,\mu_N^k) + \sum_{n=i}^{N-1} \bigg\{ r_n(x_n,a_n,\mu_n^k) \\
    &- \frac{1}{\tau_k} \sum_{x',a'} p_{n+1}(x'|x_n,a_n) \pi_{n+1}(a'|x') \log\left( \frac{\pi_{n+1}(a'|x')}{\pi_{n+1}^k(a'|x')}\right) \bigg\} \bigg| (x_i,a_i) = (x,a) \bigg],  
\end{split}
\end{equation}
where $\pi_{i+1:N} = \{\pi_{i+1},\ldots,\pi_N\}$.

First, note that $\mathbb{E}_{(x,a) \sim \mu_0(\cdot)}[\Tilde{Q}^k_0(x,a)] =$ \eqref{MD_opt_problem_appendix}. Moreover, the optimality principle states that this regularized state-action value function satisfies the following recursion
\begin{equation*}
    \begin{cases}
    \Tilde{Q}_N(x,a) = r_N(x,a, \mu_N^k) \\
    \!\begin{aligned}
            \Tilde{Q}_i(x,a) &= \max_{\pi_{i+1} \in (\Delta_\mathcal{A})^{\mathcal{X}}} \bigg\{ r_i(x,a,\mu_i^k) + \\
            &\qquad \sum_{x'} p_{i+1}(x'|x,a) \sum_{a'} \pi_{i+ 1}(a'|x') \left[  - \frac{1}{\tau_k}\log\left( \frac{\pi_{i+1}(a'|x')}{\pi_{i+1}^k(a'|x')}\right)  + \Tilde{Q}_{i+1}(x',a') \right] \bigg\}.
    \end{aligned}
    \end{cases}
\end{equation*}

Thus, to solve \eqref{MD_opt_problem_appendix} we compute backwards in time, i.e. for $i = N-1,\ldots,0$, for all $x \in \mathcal{X}$,
\begin{equation*}
    \pi_{i+1}^{k+1}(\cdot|x) \in \argmax_{\pi(\cdot|x) \in \Delta_\mathcal{A}} \left\{\big\langle \pi(\cdot|x), \Tilde{Q}_{i+1}^k(x,\cdot) \big\rangle - \frac{1}{\tau_k} D\big(\pi(\cdot|x), \pi^k_{i+1}(\cdot|x)\big) \right\},
\end{equation*}
where $D$ is the KL divergence.

The solution of this optimisation problem for each time step $i$ can be found by writing the associated Lagrangian function $\mathcal{L}$. Let $\lambda$ be the Lagrangian multiplier associated with the simplex constraint. For simplicity, let $\pi_x := \pi(\cdot|x)$, $\pi_x^k := \pi_{i+1}^k(\cdot|x)$ and $\tilde{Q}^k_x := \tilde{Q}_{i+1}^{k}(x,\cdot)$. Thus,
\begin{equation*}
    \mathcal{L}(\pi_x, \lambda) = \langle \pi_x, \tilde{Q}^k_x \rangle - \frac{1}{\tau_k} D(\pi_x, \pi_x^k) - \lambda \left(\sum_{a \in \mathcal{A}} \pi_x(a) - 1 \right).
\end{equation*}
Taking the gradient of the Lagrangian with respect to $\pi_x(a)$ for each $a \in \mathcal{A}$ gives
\begin{equation*}
    \frac{\partial \mathcal{L} (\pi_x, \lambda)}{\partial \pi_x(a)} = \tilde{Q}^k_x(a) - \frac{1}{\tau_k} \log\left(\frac{\pi_x(a)}{\pi_x^k(a)}\right) - \frac{1}{\tau_k} - \lambda,
\end{equation*}
and thus
\begin{equation*}
    \begin{split}
        \frac{\partial \mathcal{L}  (\pi_x, \lambda)}{\partial \pi_x(a)} = 0 \Longrightarrow \quad \pi_x(a) = \pi_x^k(a) \exp{\left(\tau_k \tilde{Q}_x^k(a) - 1 - \tau_k\lambda\right)}.
    \end{split}
\end{equation*}

Applying the simplex constraint, $\sum_{a\in \mathcal{A}} \pi_x(a) = 1$, we find the value of the Lagrangian multipler $\lambda$, and we get for all $a \in \mathcal{A}$

\begin{equation*}
   \pi_x(a) = \frac{\pi_x^k(a) \exp{\left(\tau_k \tilde{Q}_x^k(a)\right)}}{\sum_{a'\in \mathcal{A}} \pi_x^k(a') \exp{\left(\tau_k \tilde{Q}_x^k(a')\right)},}
\end{equation*}
which proves the theorem. 

\end{proof}

\subsection{Proof of Proposition~\ref{penalization_is_bregman}}\label{penalization_is_bregman_proof}

\begin{proposition*}
    Let $\mu \in \mathcal{M}_{\mu_0}$ with marginal given by $\rho \in (\Delta_\mathcal{X})^N$. The divergence $\Gamma$ is a Bregman divergence induced by the function
    \begin{equation*}
     \psi(\mu) := \sum_{n=1}^N \phi(\mu_n) - \sum_{n=1}^N \phi(\rho_n).
    \end{equation*}
    Also, $\psi$ is $1$-strongly convex with respect to the $\|\cdot\|_{\infty,1}$ norm.
\end{proposition*}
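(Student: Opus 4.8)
The plan is to build everything on Lemma~\ref{prop:D_decomp}, which already identifies $\Gamma(\mu,\mu')$ in two ways: with the joint KL divergence $D(\mu_{1:N},\mu'_{1:N})$, and with the marginal decomposition $\sum_{n=1}^N D(\mu_n,\mu'_n) - \sum_{n=1}^N D(\rho_n,\rho'_n)$. The two assertions of the proposition — that $\Gamma = D_\psi$ and that $\psi$ is $1$-strongly convex — will then follow from these two faces of the lemma, respectively.

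For the Bregman identity, I would first note that $\psi$ is separable across time steps, $\psi(\mu) = \sum_{n=1}^N \psi_n(\mu_n)$ with $\psi_n(\mu_n) = \phi(\mu_n) - \phi(A\mu_n)$, where $A:\mathbb{R}^{\mathcal{X}\times\mathcal{A}}\to\mathbb{R}^{\mathcal{X}}$ is the linear marginalization map $(A\mu_n)(x) = \sum_a \mu_n(x,a) = \rho_n(x)$. The induced Bregman divergence $D_\psi(\mu,\mu') = \psi(\mu)-\psi(\mu')-\langle\nabla\psi(\mu'),\mu-\mu'\rangle$ is then separable too, so it suffices to treat a single $n$. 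Computing $\nabla\psi_n(\mu'_n) = \nabla\phi(\mu'_n) - A^\top\nabla\phi(\rho'_n)$ by the chain rule — where $A^\top$ is the adjoint ``lift'' $(A^\top g)(x,a)=g(x)$ — and using the pairing $\langle A^\top\nabla\phi(\rho'_n),\,\mu_n-\mu'_n\rangle = \langle\nabla\phi(\rho'_n),\,\rho_n-\rho'_n\rangle$, the terms regroup exactly into $D_{\psi_n}(\mu_n,\mu'_n) = D(\mu_n,\mu'_n) - D(\rho_n,\rho'_n)$. Summing over $n$ and invoking the marginal decomposition of Lemma~\ref{prop:D_decomp} gives $D_\psi(\mu,\mu') = \Gamma(\mu,\mu')$. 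Differentiability of $\phi$ and $\phi\circ A$ at $\mu'$ is guaranteed since $\mu'\in\mathcal{M}_{\mu_0}^*$ keeps all entries of $\mu'_n$ and $\rho'_n$ strictly positive.

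For strong convexity, I would switch to the joint-distribution face of the lemma. Since $1$-strong convexity in the norm $\|\cdot\|_{\infty,1}$ is equivalent to $D_\psi(\mu,\mu') \geq \tfrac12\|\mu-\mu'\|_{\infty,1}^2$, and since $D_\psi(\mu,\mu')=\Gamma(\mu,\mu')=D(\mu_{1:N},\mu'_{1:N})$ by the first part and Lemma~\ref{prop:D_decomp}, it is enough to lower bound the joint KL. Pinsker's inequality gives $D(\mu_{1:N},\mu'_{1:N}) \geq \tfrac12\|\mu_{1:N}-\mu'_{1:N}\|_1^2$. It then remains to compare the $L_1$ norm on the joint with $\|\cdot\|_{\infty,1}$: because each marginal $\mu_n$ is obtained from $\mu_{1:N}$ by summing out the remaining coordinates, the triangle inequality yields $\|\mu_n-\mu'_n\|_1 \leq \|\mu_{1:N}-\mu'_{1:N}\|_1$ for every $n$, hence $\|\mu-\mu'\|_{\infty,1} = \sup_n\|\mu_n-\mu'_n\|_1 \leq \|\mu_{1:N}-\mu'_{1:N}\|_1$. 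Chaining the three inequalities gives $D_\psi(\mu,\mu')\geq\tfrac12\|\mu-\mu'\|_{\infty,1}^2$.

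I expect the main obstacle to be purely the differential-calculus step in the Bregman identity — correctly differentiating the composite $\phi\circ A$ and recognizing the adjoint pairing $\langle A^\top g, v\rangle = \langle g, Av\rangle$, while keeping straight that $\rho_n$ is itself the linear image $A\mu_n$ — rather than anything conceptually deep. The passage through the joint distribution afforded by Lemma~\ref{prop:D_decomp} is precisely what makes the strong-convexity claim almost immediate, since it reduces a statement about whole sequences of distributions to a single application of Pinsker on $\Delta_{(\mathcal{X}\times\mathcal{A})^N}$ together with the elementary $L_1$-contraction property of marginalization.
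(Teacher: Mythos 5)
Your proof is correct and takes essentially the same route as the paper's: both parts rest on Lemma~\ref{prop:D_decomp}, the same gradient computation (your chain-rule/adjoint step reproduces exactly the paper's coordinate-wise identity $\nabla\psi(\mu')_n(x,a)=\log\big(\mu'_n(x,a)/\rho'_n(x)\big)=\log\pi'_n(a|x)$), and the same Pinsker-plus-marginalization chain $D(\mu_{1:N},\mu'_{1:N})\geq\tfrac{1}{2}\|\mu_{1:N}-\mu'_{1:N}\|_1^2\geq\tfrac{1}{2}\|\mu-\mu'\|_{\infty,1}^2$. The only cosmetic difference is that you certify strong convexity via the first-order Bregman characterization $D_\psi(\mu,\mu')\geq\tfrac{1}{2}\|\mu-\mu'\|_{\infty,1}^2$, whereas the paper uses the equivalent gradient-monotonicity criterion $\langle\nabla\psi(\mu)-\nabla\psi(\mu'),\mu-\mu'\rangle=\Gamma(\mu,\mu')+\Gamma(\mu',\mu)\geq\|\mu-\mu'\|_{\infty,1}^2$.
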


\begin{proof}
 Lemma~\ref{prop:D_decomp} states that for any $\mu \in \mathcal{M}_{\mu_0}$ and $\mu' \in \mathcal{M}_{\mu_0}^*$, induced by $\pi,\pi'$ respectively as in Equation~\ref{mu_induced_pi}, with the same initial state-action distribution, i.e. $\mu_0 = \mu'_0$, we have
 \begin{equation*}
     \Gamma(\mu, \mu') := \sum_{n=1}^{N} \mathbb{E}_{(x,a) \sim \mu_n(\cdot)}\left[\log\bigg(\frac{\pi_{n}(a'|x')}{\pi^k_{n}(a'|x')}\bigg)\right]
={\displaystyle \sum_{n=1}^{N} D(\mu'_n,\mu_n)- \sum_{n=1}^{N} D(\rho'_n,\rho_n)}.
 \end{equation*}
 
Recall that $\phi$ is the negentropy and that $D$ is the Bregman divergence induced by the negentropy. Define the function $\psi: (\Delta_{\mathcal{X} \times \mathcal{A}})^N \to \mathbb{R}$ such that for any $\mu \in (\Delta_{\mathcal{X} \times \mathcal{A}})^N$
\begin{equation*}
        \psi(\mu) := \sum_{n=1}^N \phi(\mu_n) - \sum_{n=1}^N \phi(\rho_n).
\end{equation*}
To show $\Gamma$ is a Bregman divergence induced by $\psi$ we need to show that for any $\mu, \mu' \in (\Delta_{\mathcal{X} \times \mathcal{A}})^N $,
    \begin{equation*}
        \psi(\mu) - \psi(\mu') - \langle \nabla \psi(\mu'), \mu - \mu' \rangle = \Gamma(\mu, \mu').
    \end{equation*}
    For that, first recall that the marginal $\rho$ is such that for each $1 \leq n \leq N$, and for all $x \in \mathcal{X}$, $\rho_n(x) = \sum_{a \in \mathcal{A}} \mu_n(x,a)$. Thus,
    \begin{equation}\label{decomposition_psi}
    \begin{split}
        \psi(\mu) &= \sum_n \left[\sum_{x,a} \mu_n(x,a) \log(\mu_n(x,a)) - \sum_{x} \rho_n(x) \log(\rho_n(x)) \right]  \\
        &= \sum_n \sum_{x,a} \mu_n(x,a) \log\left(\frac{\mu_n(x,a)}{\sum_{a'}\mu_n(x,a')}\right).
    \end{split}
    \end{equation}
    Computing the first order partial derivative of $\psi$ with respect to $\mu_n(x,a)$ for any  $(x,a) \in \mathcal{X} \times \mathcal{A}$ and $1 \leq n \leq N$, we get
        \begin{equation*}
    \begin{split}
        \frac{\partial \psi}{\partial \mu_n(x,a)}(\mu) &= \log\left(\frac{\mu_n(x,a)}{\sum_{a'}\mu_n(x,a')}\right) + \mu_n(x,a)\frac{1}{\mu_n(x,a)} - \sum_{a'}\mu_n(x,a') \frac{1}{\sum_{a'}\mu_n(x,a')} \\
        &= \log\left(\frac{\mu_n(x,a)}{\sum_{a'}\mu_n(x,a')}\right) =  \log\left(\frac{\mu_n(x,a)}{\rho_n(x)}\right).
    \end{split}
    \end{equation*}
    Hence, as $\phi(\mu_n) = \langle \mu_n, \log(\mu_n) \rangle$ and $\phi(\rho_n) = \langle \rho_n, \log(\rho_n) \rangle$, and $\pi_n = \mu_n/\rho_n$,
    \begin{equation*}
    \begin{split}
        \psi(\mu) - \psi(\mu') - \langle \nabla \psi(\mu'), \mu - \mu' \rangle &= \sum_{n=1}^N \big[\phi(\mu_n) - \phi(\rho_n) - \big( \phi(\mu_n') - \phi(\rho_n') - \big\langle \mu_n - \mu_n', \log(\mu_n) - \log(\rho_n)\big\rangle \big) \big] \\
        &= \sum_{n=1}^N \big[\phi(\mu_n) - \phi(\rho_n) - \mu_n \log(\pi_n') \big] \\
        &= \sum_{n=1}^N \big[\mu_n \big( \log(\pi_n) - \log(\pi_n') \big) \big] \\
        &= \Gamma(\mu, \mu').
    \end{split}
    \end{equation*}
    
    Now we just need to show that $\psi$ is strongly convex. For that, we apply the following convexity property \citep{boyd}: $\psi$ is $1$-strongly convex with respect to a norm $\|\cdot\|$ if and only if for all $\mu, \mu' \in (\Delta_{\mathcal{X} \times \mathcal{A}})^N $, $\langle \nabla \psi (\mu) - \nabla \psi (\mu'), \mu - \mu'\rangle \geq \|\mu - \mu'\|^2$.
    Indeed,
    \begin{align}\label{psi_gamma_convex}
        \langle \nabla \psi (\mu) - \nabla \psi (\mu'), \mu - \mu'\rangle &= \sum_n \sum_{x,a} \left[\frac{\partial \psi}{\partial \mu_n(x,a)}(\mu) - \frac{\partial \psi}{\partial \mu_n(x,a)}(\mu') \right] \big(\mu_n(x,a) - \mu '_n(x,a)\big) \nonumber \\
        &= \sum_n \sum_{x,a} \left[\log\left(\frac{\mu_n(x,a)}{\rho_n(x)}\right) - \log\left(\frac{\mu'_n(x,a)}{\rho'_n(x)}\right) \right] \big(\mu_n(x,a) - \mu '_n(x,a)\big) \nonumber \\
        &\overset{(a)}{=} \sum_n D(\mu_n, \mu_n') + D(\mu_n, \mu'_n) - D(\rho_n, \rho'_n) - D(\rho'_n, \rho_n) \nonumber \\
        &\overset{(b)}{=} \Gamma(\mu, \mu') + \Gamma(\mu', \mu),
    \end{align}
    where $(a)$ comes from the definition of the KL divergence $D$ and $(b)$ comes from the definition of $\Gamma$. 
    
It remains to find a norm that lower bound the right-hand side. By Lemma~\ref{prop:D_decomp},
    \begin{equation*}
        \begin{split}
            \Gamma(\mu, \mu') &= \sum_{n=1}^N D(\mu_n,\mu_n')-\sum_{n=1}^N D(\rho_n,\rho'_n) =D(\mu_{1:N},\mu'_{1:N}) \\
            &\geq 2\Vert \mu_{1:N}-\mu_{1:N}\Vert^2_{\textrm{TV}} =\frac{1}{2} \Vert \mu_{1:N}-\mu'_{1:N}\Vert_1^2,
        \end{split}
    \end{equation*}
the last inequality being a consequence of Pinsker's inequality. The norm $\|\cdot\|_{\text{TV}}$ stands for the total variation norm.
Let $y$ represent an element of $(\mathcal{X} \times \mathcal{A})^{N+1}$ such that $y_i \in \mathcal{X} \times \mathcal{A}$ for all $0 \leq i \leq N$. Observe that
\begin{eqnarray*}
\Vert \mu_{1:N}-\mu'_{1:N}\Vert_1
&=&\sum_{y\in (\mathcal{X} \times \mathcal{A})^{N+1}}  \vert \mu_{1:N}(y)-\mu'_{1:N}(y)\vert \\
&\geq&
\sum_{y_n\in \mathcal{X} \times \mathcal{A}} \bigg\vert \sum_{y_s \in \mathcal{X} \times \mathcal{A}\,,\,s\neq n}
\big( \mu_{1:N}(y)-\mu'_{1:N}(y)\big) \bigg\vert
\\
&=&\sum_{y_n\in \mathcal{X} \times \mathcal{A}} \vert \mu_n(y_n)-\mu'_n(y_n)\vert, \quad \textrm{for all} \ n\in \{0,\cdots, N\}.
\end{eqnarray*}
Thus,
\begin{eqnarray*}
        \Vert \mu_{1:N}-\mu'_{1:N}\Vert_1 &\geq& \Vert \mu-\mu'\Vert_{\infty,1}\,
\end{eqnarray*}
which implies that
\begin{equation}\label{gamma_strong_convex}
\Gamma(\mu,\mu') \geq \frac{1}{2} \| \mu-\mu'\|_{\infty,1}^2.
\end{equation}
Finally, plugging back into Equation~\eqref{psi_gamma_convex} shows that $\psi$ is $1$-strongly convex with respect to the norm $\|\cdot\|_{\infty,1}$.

\end{proof}

\subsection{Complements of the proof of Theorem~\ref{thm:convergence_rate}}
\begin{lemma}\label{proof_conv_MD_MFC} 
    Let $f_n: \Delta_{\mathcal{X} \times \mathcal{A}} \rightarrow \mathbb{R}$ be convex and $\ell$-Lipschitz with respect to the norm $\|\cdot\|_1$ for all $n \in [N]$. If $F: (\Delta_{\mathcal{X} \times \mathcal{A}})^N \rightarrow \mathbb{R} $ is defined for all $\mu := (\mu_n)_{n \in [N]} \in (\Delta_{\mathcal{X} \times \mathcal{A}})^N$ as $F(\mu) := \sum_{n=1}^N f_n(\mu_n)$, then $F$ is also convex and $L$-Lipschitz with respect to the norm $\|\cdot\|_{\infty,1}$ for $L = \ell N$.
\end{lemma}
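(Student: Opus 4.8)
The plan is to treat the two conclusions separately, both as direct consequences of the corresponding properties of the individual $f_n$ together with the fact that each coordinate projection $\mu \mapsto \mu_n$ is linear.

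For convexity, I would first observe that for each fixed $n$ the map $\mu := (\mu_m)_{m \in [N]} \mapsto \mu_n$ is linear, so $\mu \mapsto f_n(\mu_n)$ is convex as the composition of a convex function with a linear map. Since $F$ is a finite sum of such functions over $n \in [N]$, and sums of convex functions are convex, $F$ is convex on $(\Delta_{\mathcal{X} \times \mathcal{A}})^N$.

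For the Lipschitz bound, fix $\mu, \mu' \in (\Delta_{\mathcal{X} \times \mathcal{A}})^N$. The idea is to apply the triangle inequality coordinatewise and then invoke the $\ell$-Lipschitzness of each $f_n$ with respect to $\|\cdot\|_1$, giving $|F(\mu) - F(\mu')| \leq \sum_{n=1}^N |f_n(\mu_n) - f_n(\mu'_n)| \leq \ell \sum_{n=1}^N \|\mu_n - \mu'_n\|_1$. The only remaining step is to dominate each term $\|\mu_n - \mu'_n\|_1$ by the supremum $\sup_{0 \leq m \leq N} \|\mu_m - \mu'_m\|_1 = \|\mu - \mu'\|_{\infty,1}$; summing $N$ such terms then yields $|F(\mu) - F(\mu')| \leq \ell N \|\mu - \mu'\|_{\infty,1} = L \|\mu - \mu'\|_{\infty,1}$, which is the claim.

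I do not expect any genuine obstacle here; the statement is essentially a bookkeeping exercise. The one point deserving mild care is matching the index ranges: the sum defining $F$ runs over $n \in [N]$, while the norm $\|\cdot\|_{\infty,1}$ takes its supremum over $0 \leq n \leq N$, so each summand is indeed bounded by the norm, and it is precisely the presence of $N$ summands that produces the factor $L = \ell N$ rather than $\ell$.
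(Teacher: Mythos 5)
Your proposal is correct and follows essentially the same route as the paper's proof: convexity via a sum of convex functions (your explicit remark about coordinate projections being linear is a minor refinement), and the Lipschitz bound via the triangle inequality, the $\ell$-Lipschitzness of each $f_n$, and domination of each $\|\mu_n - \mu'_n\|_1$ by $\|\mu - \mu'\|_{\infty,1}$, yielding the factor $L = \ell N$.
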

\begin{proof}
\textbf{Convexity:} $F$ is convex as the sum of convex functions.

\textbf{Lipschitz:} Let $\mu, \mu' \in (\Delta_{\mathcal{X} \times \mathcal{A}})^N$. As $f_n$ is Lipschitz with respect to $\|\cdot\|_1$ with constant $\ell$, then $|f_n(\mu_n) - f_n(\mu'_n)| \leq \ell \|\mu_n - \mu_n\|_1$ for all $1 \leq n \leq N$. 
Therefore,
\begin{equation*}
    \begin{split}
        |F(\mu) - F(\mu')| &= \bigg| \sum_{n=1}^N f_n(\mu_n) - f_n(\mu'_n) \bigg| \leq \sum_{n=1}^N |f_n(\mu_n) - f_n(\mu'_n)| \\
        &\leq \ell \sum_{n=1}^N \|\mu_n - \mu_n'\|_1 \leq \ell N \|\mu - \mu'\|_{\infty,1}.
    \end{split}
\end{equation*}
\end{proof}

\section{Proofs of Subsection~\ref{learning_the_model}: concentration results}

\subsection{Proof of Lemma~\ref{concentration_inner_mu}}\label{proof:concentration_inner_mu}

\begin{lemma*}
    Let $\gamma > 0$. For any $0 < \delta < 1$, and for any function $\Lambda: \mathcal{X} \rightarrow \mathbb{R}$ such that $|\Lambda(x')| \leq \sqrt{\gamma}/2$ for all $x' \in \mathcal{X}$,
    \begin{equation*}
         \big(p_n - \hat{p}_n^t \big) \big(\Lambda \big) (x,a) \leq \sqrt{\frac{\gamma}{2 M_n^t} \log\bigg(\frac{N |\mathcal{X}| |\mathcal{A}| T}{\delta} \bigg)}
    \end{equation*}
    holds with probability $1-\delta$ simultaneously for all $(x,a) \in \mathcal{X} \times \mathcal{A}$, steps $n \in [N]$, and episodes $t \in [T]$.
\end{lemma*}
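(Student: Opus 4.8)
The plan is to recognize the quantity $\hat p_n^t(\Lambda)(x,a)$ as an empirical average of i.i.d.\ bounded random variables whose common expectation is exactly $p_n(\Lambda)(x,a)$, so that the whole statement reduces to a one-sided Hoeffding bound followed by a union bound over the index tuples.

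\textbf{Rewriting the deviation.} First I would use the definition of $\hat p_n^t$ in Equation~\eqref{p_estimation} to write
\[
  \hat p_n^t(\Lambda)(x,a)
  = \sum_{x'}\hat p_n^t(x'|x,a)\,\Lambda(x')
  = \frac{1}{M_n^t}\sum_{s=1}^{M_n^t}\Lambda\big(g_n(x,a,\epsilon_n^s)\big).
\]
Since the dynamics follow Equation~\eqref{dynamics} with $\epsilon_n^s \sim h_n$ i.i.d., the true kernel satisfies $p_n(x'|x,a)=\mathbb{P}_{\epsilon\sim h_n}\big(g_n(x,a,\epsilon)=x'\big)$, hence $p_n(\Lambda)(x,a)=\mathbb{E}_{\epsilon\sim h_n}\big[\Lambda(g_n(x,a,\epsilon))\big]$. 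Thus, for a fixed triple $(x,a,n)$, the variables $Z_s:=\Lambda(g_n(x,a,\epsilon_n^s))$ are i.i.d., take values in $[-\sqrt\gamma/2,\,\sqrt\gamma/2]$ by the hypothesis on $\Lambda$, and have mean $p_n(\Lambda)(x,a)$, so that $(p_n-\hat p_n^t)(\Lambda)(x,a)=\mathbb{E}[Z_1]-\tfrac{1}{M_n^t}\sum_{s=1}^{M_n^t}Z_s$ is exactly the gap between a mean and its empirical estimate.

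\textbf{Hoeffding and union bound.} Each $Z_s$ has range $\sqrt\gamma$, so one-sided Hoeffding gives, for fixed $(x,a,n,t)$ and any $u>0$,
\[
  \mathbb{P}\Big(\mathbb{E}[Z_1]-\tfrac{1}{M_n^t}\textstyle\sum_{s}Z_s\ge u\Big)\le \exp\!\big(-2M_n^t u^2/\gamma\big).
\]
Setting the right-hand side equal to $\delta':=\delta/(N|\mathcal{X}||\mathcal{A}|T)$ yields $u=\sqrt{\tfrac{\gamma}{2M_n^t}\log(1/\delta')}$, which matches the claimed bound. I would then take a union bound over the $N|\mathcal{X}||\mathcal{A}|T$ tuples $(x,a,n,t)$, so the total failure probability is at most $N|\mathcal{X}||\mathcal{A}|T\cdot\delta'=\delta$, giving the simultaneous statement.

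\textbf{Main obstacle.} The only delicate point is the sample count $M_n^t$: it grows with $t$, and the empirical averages at consecutive episodes share samples, so the events indexed by $t$ are dependent. This is harmless, because for each fixed $t$ the count $M_n^t$ is non-random (in the protocol of Algorithm~\ref{alg:online_protocol} every step is observed $M$ times per episode, so $M_n^t=M(t-1)$), and Hoeffding applies with that deterministic sample size; the union bound then requires no independence, only a small per-event probability. Hence the explicit factor $T$ inside the logarithm absorbs the dependence on the episode index, and no peeling or maximal inequality is needed.
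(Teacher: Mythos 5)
Your proposal is correct and follows essentially the same route as the paper's proof: rewrite $(p_n-\hat p_n^t)(\Lambda)(x,a)$ as the gap between the expectation $\mathbb{E}_{\epsilon_n\sim h_n}[\Lambda(g_n(x,a,\epsilon_n))]$ and the empirical average of the i.i.d.\ bounded variables $\Lambda(g_n(x,a,\epsilon_n^s))$, apply one-sided Hoeffding with range $\sqrt{\gamma}$, and take a union bound over the $N|\mathcal{X}||\mathcal{A}|T$ tuples. Your explicit remark that $M_n^t$ is deterministic (equal to $M(t-1)$ under the protocol) is exactly the observation the paper makes to justify applying Hoeffding at each fixed $t$, so there is no gap.
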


\begin{proof}
Let $\gamma > 0$. Recall that, for all $n \in [N]$, and for all $(x,a) \in \mathcal{X} \times \mathcal{A}$, $p_n(x|x,a) := \mathbb{P}\big(g_n(x,a, \epsilon_n) = x \big)$ and $\hat{p}_n^t(x|x,a) = \frac{1}{M_n^t} \sum_{s=1}^{M_n^t} \delta_{g_n(x,a, \epsilon^s_n)}$ where $M_n^t$ is the number of times we observe step $n$ until the start of episode $t$, and $\epsilon^s_n$ is the $s$-th noise observed at step $n$. Note that $M_n^t$ is not random. Therefore,
\begin{equation*}
     \big(p_n- \hat{p}_n^t \big) (\Lambda)(x,a) := \sum_{x' \in \mathcal{X}} \big(p_n(x'|x,a) - \hat{p}^t_n(x'|x,a) \big) \Lambda (x') = \mathbb{E}_{\epsilon_n \sim h_n(\cdot)} \big[ \Lambda \big( g_n(x,a, \epsilon_n) \big) \big] -  \frac{1}{M_n^t} \sum_{s=1}^{M_n^t} \Lambda \big( g_n(x,a, \epsilon^s_n) \big).
\end{equation*}


From the hypothesis on the bound of $\Lambda$, we have that almost surely $\Lambda \big( g_n(x,a, \epsilon^s_i) \big) \in [-\sqrt{\gamma}/2, \sqrt{\gamma}/2]$ for all $s \in M_n^t$, therefore, applying Hoeffding's inequality to the sequence of random variables $\big(\Lambda \big( g_n(x,a, \epsilon^s_i) \big) \big)_{s \in [M_n^t]}$ yields, for all $\xi > 0$,
\begin{equation}\label{hoeffding_fixed_y}
     \mathbb{P} \bigg( \big(p_n- \hat{p}_n^t \big) (\Lambda)(x,a)\geq \xi \bigg) \leq \exp\bigg(\frac{-2 \xi^2 M_n^t}{\gamma} \bigg).
\end{equation}

Applying the union bound we then get that simultaneously for all $(x,a) \in \mathcal{X} \times \mathcal{A}$, steps $n \in [N]$ and episodes $t \in [T]$, 
\begin{equation*}
      \big(p_n- \hat{p}_n^t \big) (\Lambda)(x,a)\leq \sqrt{\frac{\gamma}{2 M_n^t}\log\bigg(\frac{N |\mathcal{X}| |\mathcal{A}| T }{\delta}\bigg)} 
\end{equation*}
holds with probability $1 - \delta$ for any $0 < \delta < 1$.
\end{proof}
\subsection{Proof of Lemma~\ref{lemma:weak_bound_mu}}\label{proof:weak_bound_mu}

\begin{lemma*}
For any vector $v \in \mathbb{R}^{|\mathcal{X}| \times |\mathcal{A}|}$, for any strategy $\pi$ and for all $n \in [N]$,
    \begin{equation*}
        \begin{split}
            &\langle v,  \mu_n^{\pi, p} - \mu_n^{\pi, \hat{p}^t} \rangle = \sum_{i=1}^{n} \sum_{y \in \mathcal{X} \times \mathcal{A}} \mu_{i-1}^{\pi, \hat{p}^t}(y) \big(p_i- \hat{p}_i^t \big) (\Lambda^{i,n, \pi}_v)(y),
        \end{split}
    \end{equation*}
    where $\Lambda^{i,n, \pi}_v :\mathcal{X} \rightarrow \mathbb{R}$ is a function depending on $v, i, n$ and $\pi$ defined in Equation~\eqref{def_Lambda}. Also, if $\|v\|_\infty := \sup_{(x,a) \in \mathcal{X} \times \mathcal{A}} |v(x,a)| \leq V$, then $\|\Lambda_v^{i,n, \pi}\|_\infty \leq V$.
\end{lemma*}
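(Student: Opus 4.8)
The plan is to treat this as a simulation-lemma (performance-difference) decomposition: the two occupancy sequences $\mu^{\pi,p}$ and $\mu^{\pi,\hat p^t}$ start from the same $\mu_0$ and are driven by the same policy $\pi$ but by different kernels, so their difference telescopes over the steps at which the kernels disagree. For each step $i$ I would introduce the forward one-step operator $(T_i^p\nu)(x',a') := \sum_{x,a}\nu(x,a)\,p_i(x'|x,a)\,\pi_i(a'|x')$, so that $\mu_n^{\pi,p} = T_n^p\cdots T_1^p\,\mu_0$ by \eqref{mu_induced_pi}, and analogously for $\hat p^t$. The first step is to apply the standard product-difference identity $T_n^p\cdots T_1^p - T_n^{\hat p^t}\cdots T_1^{\hat p^t} = \sum_{i=1}^n (T_n^p\cdots T_{i+1}^p)(T_i^p - T_i^{\hat p^t})(T_{i-1}^{\hat p^t}\cdots T_1^{\hat p^t})$, recognising the trailing product applied to $\mu_0$ as $\mu_{i-1}^{\pi,\hat p^t}$.

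Next I would pair this with $v$ and push the leading $p$-propagation $T_n^p\cdots T_{i+1}^p$ onto $v$ by adjointness, defining a backward value function $W_i$ with $W_n=v$ and $W_i = (T_{i+1}^p)^\ast W_{i+1}$, i.e. $W_i(x,a)=\sum_{x'}p_{i+1}(x'|x,a)\sum_{a'}\pi_{i+1}(a'|x')W_{i+1}(x',a')$. Setting $\Lambda^{i,n,\pi}_v(x') := \sum_{a'}\pi_i(a'|x')\,W_i(x',a')$ --- the quantity defined in \eqref{def_Lambda} --- the $i$-th summand becomes $\langle W_i,(T_i^p - T_i^{\hat p^t})\mu_{i-1}^{\pi,\hat p^t}\rangle$. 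Summing first over the action $a'$ absorbs the lone policy factor $\pi_i(a'|x')$ coming from $T_i$ into $\Lambda^{i,n,\pi}_v$ and leaves only the bare kernel discrepancy acting on the state coordinate, producing exactly $\sum_y \mu_{i-1}^{\pi,\hat p^t}(y)\,(p_i-\hat p_i^t)(\Lambda^{i,n,\pi}_v)(y)$, which is the claimed identity.

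For the sup-norm bound I would argue by backward induction on $i$: both the adjoint step $W_i = (T_{i+1}^p)^\ast W_{i+1}$ and the averaging $\sum_{a'}\pi_i(a'|x')(\cdot)$ are convex combinations, since $p_{i+1}(\cdot|x,a)$, $\pi_{i+1}(\cdot|x')$ and $\pi_i(\cdot|x')$ are probability distributions, and each is therefore nonexpansive for $\|\cdot\|_\infty$. Starting from $\|v\|_\infty\le V$ and descending from $i=n$ then yields $\|\Lambda^{i,n,\pi}_v\|_\infty\le V$ for every $i$.

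The main obstacle is bookkeeping rather than conceptual: keeping straight that the occupancy factor is propagated with $\hat p^t$ up to step $i-1$ while the value factor is propagated with the true $p$ from step $i+1$ to $n$, and in particular ensuring the single $\pi_i$ factor from the discrepancy operator lands inside $\Lambda^{i,n,\pi}_v$ rather than in the occupancy term. I would verify the boundary cases explicitly --- the empty sum at $n=0$ matching $\mu_0^{\pi,p}=\mu_0^{\pi,\hat p^t}$, and the top term $\Lambda^{n,n,\pi}_v(x)=\sum_a\pi_n(a|x)v(x,a)$. An equivalent and perhaps more transparent route is induction on $n$: writing $\langle v,\mu_n^{\pi,p}\rangle=\langle w^p,\mu_{n-1}^{\pi,p}\rangle$ with $w^p(x,a)=\sum_{x'}p_n(x'|x,a)\Lambda^{n,n,\pi}_v(x')$, then adding and subtracting $\langle w^p,\mu_{n-1}^{\pi,\hat p^t}\rangle$ peels off the $i=n$ term and reduces to step $n-1$, which simultaneously confirms that the recursion defining $\Lambda^{i,n,\pi}_v$ is consistent across target horizons.
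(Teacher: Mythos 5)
Your proposal is correct and follows essentially the same route as the paper: the identical telescoping decomposition in which the occupancy factor $\mu_{i-1}^{\pi,\hat p^t}$ is propagated with the estimated kernel while the discrepancy is pushed forward with the true kernel $p$, with your backward adjoint recursion $W_i = (T_{i+1}^p)^\ast W_{i+1}$ being exactly the paper's explicit kernel product $\sum_{y} K_{i+1:n}(y)\,v(y)$ inside the definition of $\Lambda^{i,n,\pi}_v$ in Equation~\eqref{def_Lambda}. The sup-norm bound via nonexpansiveness of the stochastic averaging steps also matches the paper's direct argument, so no gap remains.
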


\begin{proof}
For $y \in \mathcal{X} \times \mathcal{A}$, we denote by $v(y)$ the element $y$ of vector $v$.

For all $n \in [N]$, for all $y := (x,a) \in \mathcal{X} \times \mathcal{A}$ and $y' := (x',a') \in \mathcal{X} \times \mathcal{A}$, let
\begin{equation*}
    \begin{split}
        K_n(y, y') &:= p_n(x|x',a') \pi_n(a|x), \\
        \hat{K}_n^t(y, y') &:= \hat{p}^t_n(x|x',a') \pi_n(a|x).
    \end{split}
\end{equation*}
For $\eta$ a vector over $\mathcal{X} \times \mathcal{A}$, we define for all $y \in  \mathcal{X} \times \mathcal{A}$ and $y_0 \in \mathcal{X} \times \mathcal{A}$ the following notations
\begin{equation*}
\begin{split}
        \eta K_{1:n} (y) &:= \sum_{y_0 \in \mathcal{X} \times \mathcal{A}} \ldots \sum_{y_{n-1} \in \mathcal{X} \times \mathcal{A}} \eta(y_0) K_1(y_0,y_1) \ldots K_n(y,y_{n-1}) \\
         \eta(y_0) K_{1:n} (y) &:= \sum_{y_1 \in \mathcal{X} \times \mathcal{A}} \ldots \sum_{y_{n-1} \in \mathcal{X} \times \mathcal{A}} \eta(y_0) K_1(y_0,y_1) \ldots K_n(y,y_{n-1}).
\end{split}    
\end{equation*}
We can then rewrite the definition of a state-action distribution satisfying the Markovian dynamics and induced by a policy $\pi$ stated in Equation~\eqref{mu_induced_pi} as $\mu_n^{\pi,p} = \mu_0 K_{1:n}$, and $\mu_n^{\pi, \hat{p}^t} = \mu_0 \hat{K}^t_{1:n}$. With the convention that ${K_{n+1:n} := \mathrm{Id}}$ is the identity operator for all $n$, then
\begin{equation*}
\begin{split}
    \mu_n^{\pi, p} - \mu_n^{\pi, \hat{p}^t} &= \mu_0 K_{1:n} - \mu_0 \hat{K}^t_{1:n} \\
    &= (\mu_0 K_{1:n} - \mu_0 \hat{K}^t_1 K_{2:n}) + (\mu_0 \hat{K}^t_1 K_{2:n} - \mu_0 \hat{K}^t_{1:2}K_{3:n}) + \ldots + (\mu_0 \hat{K}^t_{1:n-1} K_{n} - \mu_0 \hat{K}^t_{1:n}) \\
    &= \sum_{i=1}^n \mu_{i-1}^{\pi, \hat{p}^t} \big( K_i - \hat{K}_i^t \big) K_{i+1:n}.
\end{split}
\end{equation*}

Note that, for all $y \in \mathcal{X} \times \mathcal{A}$, and all $i \in \{0,\ldots,n\}$,
\begin{equation*}
    \begin{split}
\mu_{i-1}^{\pi, \hat{p}^t} \big( K_i - \hat{K}_i^t \big) K_{i+1:n} (y)
     &= \sum_{y_{i-1}} \mu_{i-1}^{\pi, \hat{p}^t}(y_{i-1}) \sum_{y_i} \big( K_i(y_{i-1}, y_i) - \hat{K}_i^t(y_{i-1},y_i) \big) K_{i+1:n}(y) \\
     &=  \sum_{y_{i-1}} \mu_{i-1}^{\pi, \hat{p}^t}(y_{i-1}) \sum_{x_i} \big(p_i(x_i| y_{i-1}) - \hat{p}_i^t(x_i | y_{i-1}) \big) \sum_{a_i} \pi_i(a_i|x_i) K_{i+1:n}(y).
    \end{split}
\end{equation*}

Hence,
\begin{equation*}
\begin{split}
    \langle v,  \mu_n^{\pi, p} - \mu_n^{\pi, \hat{p}^t} \rangle &= \sum_{y} v(y) \sum_{i=1}^{n} \mu_{i-1}^{\pi, \hat{p}^t} \big( K_i - \hat{K}_i^t \big) K_{i+1:n} (y) \\
    &= \sum_{i=1}^{n} \sum_{y_{i-1} \in \mathcal{X} \times \mathcal{A}} \mu_{i-1}^{\pi, \hat{p}^t}(y_{i-1}) \sum_{x_i \in \mathcal{X}} \big(p_i(x_i| y_{i-1}) - \hat{p}_i^t(x_i | y_{i-1}) \big) \sum_{a_i \in \mathcal{A}} \pi_i(a_i|x_i) \sum_{y \in \mathcal{X} \times \mathcal{A}} K_{i+1:n}(y) v(y) \\
    &= \sum_{i=1}^{n} \sum_{y_{i-1} \in \mathcal{X} \times \mathcal{A}} \mu_{i-1}^{\pi, \hat{p}^t}(y_{i-1}) \sum_{x_i \in \mathcal{X}} \big(p_i(x_i| y_{i-1}) - \hat{p}_i^t(x_i | y_{i-1}) \big) \Lambda^{i,n, \pi}_v(x_i) \\
    &:= \sum_{i=1}^{n} \sum_{y_{i-1} \in \mathcal{X} \times \mathcal{A}} \mu_{i-1}^{\pi, \hat{p}^t}(y_{i-1}) \big(p_i- \hat{p}_i^t \big) (\Lambda^{i,n, \pi}_v)(y_{i-1}) \\
\end{split}
\end{equation*}

where we define the function $\Lambda^{i,n, \pi}_v : \mathcal{X} \rightarrow \mathbb{R}$ for any $v \in \mathbb{R}^{\mathcal{X} \times \mathcal{A}}$ as
\begin{equation}\label{def_Lambda}
    \Lambda^{i,n, \pi}_v(x) := \sum_{a \in \mathcal{A}} \pi_i(a|x) \sum_{y \in \mathcal{X} \times \mathcal{A}} K_{i+1:n}(y) v(y).
\end{equation}

If $\|v\|_\infty \leq V$, then for all $x \in \mathcal{X}$,
\begin{equation*}
\begin{split}
        | \Lambda^{i,n, \pi}_{ v} (x)| &\leq \sum_{a \in \mathcal{A}} \pi_i\big(a| x \big) \sum_{y' \in \mathcal{X} \times \mathcal{A}} K_{i+1:n}(y') | v (y')| \\
        &\leq V \sum_{a \in \mathcal{A}} \pi_i\big(a| x\big) \sum_{y' \in \mathcal{X} \times \mathcal{A}} K_{i+1:n}(y') \\
        &=  V.
\end{split}
\end{equation*}
Therefore, $\|\Lambda^{i,n, \pi}_{ v} \|_\infty \leq V$.
\end{proof}

\subsection{Proof of Proposition~\ref{prop:bound_regret_proba}: upper bound on $R_T^{MDP}$}\label{proof:bound_regret_proba}

\begin{proposition*}
We consider an episodic MDP with finite state space $\mathcal{X}$, finite action space $\mathcal{A}$, episodes of length $N$, and probability kernel $p:=(p_n)_{n \in [N]}$. We let $F^t := \sum_{n=1}^N f_n^t$ convex with $f_n^t$ $\ell$-Lipschitz with respect to the norm $\|\cdot\|_1$ for all $n \in [N], t \in [T]$. Then, with probability $1-\delta$, Greedy MD-CURL obtains,
    \begin{equation*}
    R_T^{MDP}\big((\pi^t)_{t \in [T]}\big) \leq \ell N^2 \sqrt{\frac{2 T}{M} \log\bigg(\frac{N |\mathcal{X}| |\mathcal{A}| T}{\delta}\bigg)}. \\
    \end{equation*}
    The exact same result being also valid for $R_T^{MDP}\big(\pi^*\big)$.
\end{proposition*}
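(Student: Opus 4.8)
The plan is to avoid the crude estimate $|F^t(\mu^{\pi^t,p}) - F^t(\mu^{\pi^t,\hat p^t})| \le L\,\|\mu^{\pi^t,p}-\mu^{\pi^t,\hat p^t}\|_{\infty,1}$, which would force us to control the $L_1$ deviation of $\hat p^t$ from $p$ and thereby pay the $\sqrt{|\mathcal X|}$ factor the paper is trying to shed. Instead I would first linearize using convexity of $F^t$: for each episode $t$, $F^t(\mu^{\pi^t,p}) - F^t(\mu^{\pi^t,\hat p^t}) \le \langle \nabla F^t(\mu^{\pi^t,p}),\, \mu^{\pi^t,p}-\mu^{\pi^t,\hat p^t}\rangle = \sum_{n=1}^N \langle \nabla f_n^t(\mu_n^{\pi^t,p}),\, \mu_n^{\pi^t,p}-\mu_n^{\pi^t,\hat p^t}\rangle$, where each gradient satisfies $\|\nabla f_n^t\|_\infty \le \ell$ by $\ell$-Lipschitzness of $f_n^t$ with respect to $\|\cdot\|_1$.

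Next I would apply Lemma~\ref{lemma:weak_bound_mu} separately to each inner product with $v = \nabla f_n^t(\mu_n^{\pi^t,p})$, so that $\langle v, \mu_n^{\pi^t,p}-\mu_n^{\pi^t,\hat p^t}\rangle = \sum_{i=1}^n \sum_y \mu_{i-1}^{\pi^t,\hat p^t}(y)\,(p_i-\hat p_i^t)(\Lambda_v^{i,n,\pi^t})(y)$, with test functions satisfying $\|\Lambda_v^{i,n,\pi^t}\|_\infty \le \ell$. Then I invoke the scalar concentration of Lemma~\ref{concentration_inner_mu} with $\gamma = 4\ell^2$ (so that $\sqrt\gamma/2 = \ell$ dominates $\|\Lambda\|_\infty$), which bounds every term $(p_i-\hat p_i^t)(\Lambda)(y)$ by $\ell\sqrt{\tfrac{2}{M_i^t}\log(N|\mathcal X||\mathcal A|T/\delta)}$, simultaneously over all $(x,a)$, $n$, $i$, $t$, on a single event of probability $1-\delta$. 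Crucially, this Hoeffding-type bound depends on the number of states only through $\log|\mathcal X|$, which is exactly what prevents the $\sqrt{|\mathcal X|}$ blow-up.

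To finish I would collapse the outer average, using $\sum_y \mu_{i-1}^{\pi^t,\hat p^t}(y)=1$, so that each inner product is at most $\sum_{i=1}^n \ell\sqrt{\tfrac{2}{M_i^t}\log(N|\mathcal X||\mathcal A|T/\delta)}$; summing over $i\le n$ and $n\le N$ contributes a factor of order $N^2$, and summing over $t$ with $M_i^t = \Theta(Mt)$ together with $\sum_{t=1}^T t^{-1/2}\le 2\sqrt T$ produces the $\sqrt{T/M}$ scaling, yielding $R_T^{MDP}((\pi^t)_{t\in[T]}) \le \ell N^2\sqrt{\tfrac{2T}{M}\log(N|\mathcal X||\mathcal A|T/\delta)}$. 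The bound for $R_T^{MDP}(\pi^*)$ is obtained identically: convexity at $\mu^{\pi^*,\hat p^{t+1}}$ produces the opposite sign, which I handle by applying Lemma~\ref{concentration_inner_mu} to $-\Lambda$ (whose sup-norm is still at most $\ell$), while $\hat p^{t+1}$ (based on $Mt$ samples) replaces $\hat p^t$ without changing the final rate.

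The main obstacle is obtaining this dimension-free control of the estimation error integrated against the induced occupancy measures, i.e. routing everything through Lemmas~\ref{lemma:weak_bound_mu} and \ref{concentration_inner_mu} rather than a direct $L_1$ argument. The delicate point is that the test functions $\Lambda_v^{i,n,\pi^t}$ are themselves data-dependent (through the played policy $\pi^t$ and the gradient evaluated at $\mu^{\pi^t,p}$), so invoking Lemma~\ref{concentration_inner_mu} requires its high-probability event to hold simultaneously for all admissible bounded test functions rather than a single fixed one; checking that the union bound over $(x,a),n,t$ combined with the uniform sup-norm bound $\|\Lambda\|_\infty\le\ell$ suffices, and separately handling the $t=1$ term where $\hat p^1$ carries no data (bounded crudely by $2\ell N$ via total variation), are the steps demanding the most care.
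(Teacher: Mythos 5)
Your proposal follows essentially the same route as the paper's own proof: linearize via convexity of $F^t$, apply Lemma~\ref{lemma:weak_bound_mu} with $v=\nabla f_n^t(\mu_n^{\pi^t,p})$ (whose sup-norm is at most $\ell$ by Lipschitzness), invoke Lemma~\ref{concentration_inner_mu} with $\gamma = 4\ell^2$, collapse the occupancy average using $\sum_y \mu_{i-1}^{\pi^t,\hat p^t}(y)=1$, and sum over $i\le n\le N$ and $t\in[T]$ with $M_n^t = M(t-1)$ to obtain $\ell N^2\sqrt{\tfrac{2T}{M}\log(N|\mathcal X||\mathcal A|T/\delta)}$, with the symmetric sign argument for $R_T^{MDP}(\pi^*)$. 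The subtlety you flag about the test functions $\Lambda^{i,n,\pi^t}_v$ being data-dependent is a real one, but the paper's proof handles it no more carefully than you do: it applies Lemma~\ref{concentration_inner_mu} to these policy-dependent functions directly, so your attempt matches the paper's argument in both structure and level of rigor.
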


\begin{proof}

The proof steps are the same for both terms, hence we show only the steps for $ R_T^{MDP}\big((\pi^t)_{t \in [T]}\big) $. Using the convexity of $F^t$ we obtain
\begin{equation*}
\begin{split}
    R_T^{MDP}\big((\pi^t)_{t \in [T]}\big) &\leq \sum_{t=1}^T \langle \nabla F^t(\mu^{\pi^t,p}), \mu^{\pi^t,p} - \mu^{\pi^t,\hat{p}^t} \rangle = \sum_{t=1}^T  \sum_{n=1}^N \langle \nabla f_n^t(\mu^{\pi^t,p}), \mu^{\pi^t,p}_n - \mu^{\pi^t,\hat{p}^t}_n\rangle.
\end{split}
\end{equation*}

To bound the inner product for each $n$, we first use the result of Lemma~\ref{lemma:weak_bound_mu} to obtain that
\begin{equation*}
    \langle \nabla f_n^t(\mu^{\pi^t,p}), \mu^{\pi^t,p}_n - \mu^{\pi^t,\hat{p}^t}_n\rangle = \sum_{i=1}^{n} \sum_{y \in \mathcal{X} \times \mathcal{A}} \mu_{i-1}^{\pi, \hat{p}^t}(y) \big(p_i- \hat{p}_i^t \big) \big(\Lambda^{i,n, \pi^t}_{\nabla f_n^t(\mu^{\pi^t,p})}\big)(y).
\end{equation*}

As $f_n^t$ is $\ell$-Lipschitz with respect to the norm $\|\cdot\|_1$ for all $n$ and $t$, then for all state-action distribution $\mu_n \in \Delta_{\mathcal{X} \times \mathcal{A}}$, $\|\nabla f_n^t(\mu_n)\|_\infty := \sup_{(x,a)} |\nabla f_n^t(\mu_n)(x,a)| \leq \ell$. Hence, from the second result of Lemma~\ref{lemma:weak_bound_mu} we have $\big\|\Lambda^{i,n, \pi^t}_{\nabla f_n^t(\mu^{\pi^t,p})}\big\|_\infty \leq \ell$. Therefore, all the conditions of Lemma~\ref{concentration_inner_mu} are satisfied with $\gamma = 4 \ell^2$, meaning that 
\begin{equation*}
\begin{split}
        \sum_{t=1}^T \sum_{n=1}^N  \langle \nabla f_n^t(\mu_n),  \mu_n^{\pi^t, p} - \mu_n^{\pi^t, \hat{p}^t} \rangle &=  \sum_{t=1}^T \sum_{n=1}^N  \sum_{i=1}^{n-1} \sum_{y \in \mathcal{X} \times \mathcal{A}} \mu_{i-1}^{\pi^t, \hat{p}^t}(y) \big(p_i- \hat{p}_i^t \big) \big(\Lambda^{i,n, \pi^t}_{\nabla f_n^t(\mu^{\pi^t,p})}\big)(y) \\
        &\leq \sum_{t=1}^T N^2 \ell \sqrt{\frac{2}{M t
        }\log\bigg(\frac{N |\mathcal{X}| |\mathcal{A}| T }{\delta}\bigg)} \\
    &\leq N^2 \ell \sqrt{\frac{2 T}{M} \log\bigg(\frac{N |\mathcal{X}| |\mathcal{A}| T}{\delta}\bigg)}
\end{split}
\end{equation*}
holds with probability $1-\delta$, where we use that in Greedy MD-CURL, $M_n^t = M (t-1)$ for all $n \in [N]$.
\end{proof}

\section{Proofs of Theorem~\ref{main_result}: upper bound on $R_T$}


\subsection{Auxiliary result: $L_1$ bound between distributions induced by the same policy but different probability kernels}

The bound of Theorem~\ref{main_result} depends on the auxiliary lemma bellow stating that the $L_1$ deviation of two state-action distributions induced by the same policies but different probability kernels is bounded by the $L_1$ difference between the probability kernels.

\begin{lemma}\label{lemma:bound_norm_mu}
        For any strategy $\pi \in (\Delta_\mathcal{A})^{\mathcal{X} \times N}$, for any two probability kernels $p = (p_n)_{n \in [N]}$ and $q = (q_n)_{n \in [N]}$ such that $p_n, q_n: \mathcal{X} \times \mathcal{A} \times \mathcal{X} \rightarrow [0,1]$, and for all $n \in [N]$,
    \begin{equation*}
    \begin{split}
        \|\mu_n^{\pi,p} - \mu_n^{\pi, q}\|_1 \leq \sum_{i=0}^{n-1} \sum_{x,a} \mu_i^{\pi, p}(x,a) \| p_{i+1}(\cdot| x,a) - q_{i+1} (\cdot| x,a) \|_1.
    \end{split}
\end{equation*}
\end{lemma}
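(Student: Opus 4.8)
The plan is to reuse the operator-based telescoping already introduced in the proof of Lemma~\ref{lemma:weak_bound_mu}. For the two kernels $p$ and $q$ and the fixed policy $\pi$, define the one-step transition operators on measures over $\mathcal{X}\times\mathcal{A}$ by $K_n^p(y,y') := p_n(x\,|\,x',a')\,\pi_n(a\,|\,x)$ and $K_n^q(y,y') := q_n(x\,|\,x',a')\,\pi_n(a\,|\,x)$ for $y=(x,a)$ and $y'=(x',a')$, so that $\mu_n^{\pi,p}=\mu_0 K_{1:n}^p$ and $\mu_n^{\pi,q}=\mu_0 K_{1:n}^q$. First I would write the standard telescoping identity, keeping $p$-operators to the left of the difference and $q$-operators to the right,
$$\mu_0 K_{1:n}^p - \mu_0 K_{1:n}^q = \sum_{i=1}^{n} \big(\mu_0 K_{1:i-1}^p\big)\,\big(K_i^p - K_i^q\big)\, K_{i+1:n}^q,$$
and then use $\mu_0 K_{1:i-1}^p = \mu_{i-1}^{\pi,p}$, together with the convention $K_{n+1:n}^q=\mathrm{Id}$, to obtain
$$\mu_n^{\pi,p} - \mu_n^{\pi,q} = \sum_{i=1}^{n} \mu_{i-1}^{\pi,p}\big(K_i^p - K_i^q\big) K_{i+1:n}^q.$$

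Next I would apply the triangle inequality in $\|\cdot\|_1$ to the sum over $i$, and then exploit that each $K_m^q$ is a Markov operator: it is nonnegative and satisfies $\sum_y K_m^q(y,y')=\sum_{x} q_m(x\,|\,x',a')\sum_{a}\pi_m(a\,|\,x)=1$ for every fixed $y'$. Consequently each composed operator $K_{i+1:n}^q$ is nonexpansive in $L_1$, i.e. $\|\eta K_{i+1:n}^q\|_1\le\|\eta\|_1$ for any signed measure $\eta$, which lets me discard the downstream dynamics and reduce each summand to $\|\mu_{i-1}^{\pi,p}(K_i^p-K_i^q)\|_1$.

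The remaining step is a direct expansion of the single-step difference. Writing $\big(\mu_{i-1}^{\pi,p}(K_i^p-K_i^q)\big)(x',a')=\sum_{x,a}\mu_{i-1}^{\pi,p}(x,a)\,\big(p_i(x'|x,a)-q_i(x'|x,a)\big)\,\pi_i(a'|x')$, a second triangle inequality pulls the absolute value inside the sums, and the summation over $a'$ collapses via $\sum_{a'}\pi_i(a'|x')=1$, leaving exactly $\sum_{x,a}\mu_{i-1}^{\pi,p}(x,a)\,\|p_i(\cdot|x,a)-q_i(\cdot|x,a)\|_1$. Reindexing $i\mapsto i-1$ in the outer sum then yields the claimed inequality. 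The one point deserving care — and the main conceptual obstacle — is the $L_1$-nonexpansiveness of the composed Markov operator $K_{i+1:n}^q$, since that is precisely what allows dropping the tail transitions; the rest is bookkeeping on the telescoping direction and on the policy normalization.
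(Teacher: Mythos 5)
Your proof is correct, and it takes a genuinely different (though closely related) route from the paper's. The paper proves Lemma~\ref{lemma:bound_norm_mu} by induction on $n$: it expands $\mu_n^{\pi,p}-\mu_n^{\pi,q}$ through one step of the recursion~\eqref{mu_induced_pi}, adds and subtracts the cross term $\mu_{n-1}^{\pi,p}(x',a')\,q_n(x|x',a')$, and uses the triangle inequality together with the normalizations $\sum_a \pi_n(a|x)=1$ and $\sum_{x} q_n(x|x',a')=1$ to obtain the one-step kernel-difference term plus $\|\mu_{n-1}^{\pi,p}-\mu_{n-1}^{\pi,q}\|_1$, which it then unrolls from the base case $\|\mu_0^{\pi,p}-\mu_0^{\pi,q}\|_1=0$. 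You instead recycle the operator telescoping from the proof of Lemma~\ref{lemma:weak_bound_mu} (correctly oriented, with the $p$-prefix on the left so that the weights come out as $\mu_{i-1}^{\pi,p}$ rather than $\mu_{i-1}^{\pi,q}$), and then discard the downstream factors $K_{i+1:n}^q$ all at once via $L_1$-nonexpansiveness of composed Markov operators, which you correctly identify and justify. The two arguments are mathematically equivalent — unrolling the paper's induction produces exactly your telescoped sum, and the second term in the paper's induction step is precisely a one-step instance of the nonexpansiveness you invoke for the whole composition — but your version buys uniformity with the machinery of Lemma~\ref{lemma:weak_bound_mu} and isolates the reusable contraction fact as an explicit lemma, whereas the paper's induction is more elementary and self-contained, making visible exactly where each normalization is consumed.
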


    \begin{proof}
From the definition of a state-action distribution sequence induced by a policy $\pi$ in a probability kernel $p$ in Equation~\eqref{mu_induced_pi},  we have that for all $(x,a) \in \mathcal{X} \times \mathcal{A}$ and $n \in [N]$,
\begin{equation*}
    \mu_n^{\pi, p}(x,a) = \sum_{x',a'} \mu_{n-1}^{\pi, p}(x',a') p_n(x|x',a') \pi_n(a|x).
\end{equation*}

Thus,
\begin{equation*}
\begin{split}
        \|\mu_n^{\pi, p} - \mu_n^{\pi, q}\|_1 &= \sum_{x,a} \big|\mu_n^{\pi, p}(x,a) - \mu_n^{\pi, q}(x,a)\big| \\
        &= \sum_{x,a} \sum_{x',a'} \big| \mu_{n-1}^{\pi, p}(x',a') p_n(x|x',a') - \mu_{n-1}^{\pi, q}(x',a') q_n(x|x',a') \big| \pi_n(a|x) \\
        &= \sum_{x} \sum_{x',a'} \big| \mu_{n-1}^{\pi, p}(x',a') p_n(x|x',a') - \mu_{n-1}^{\pi,q}(x',a') q_n(x|x',a') \big| \\
        &= \sum_{x} \sum_{x',a'} \big| \mu_{n-1}^{\pi,p}(x',a') p_n(x|x',a') - \mu_{n-1}^{\pi,p}(x',a') q_n(x|x',a') \\
        & \hspace{1cm} + \mu_{n-1}^{\pi,p}(x',a') q_n(x|x',a') - \mu_{n-1}^{\pi,q}(x',a') q_n(x|x',a') \big| \\
        &\leq  \sum_{x',a'} \mu_{n-1}^{\pi,p}(x',a') \| p_n(\cdot|x',a') - q_n(\cdot|x',a') \|_1 + \sum_{x',a'} \big| \mu_{n-1}^{\pi,p}(x',a') - \mu_{n-1}^{\pi,q}(x',a') \big| \\
        &= \sum_{x',a'} \mu_{n-1}^{\pi,p}(x',a') \| p_n(\cdot|x',a') - q_n(\cdot|x',a') \|_1 +\| \mu_{n-1}^{\pi,p} - \mu_{n-1}^{\pi,q} \|_1.
\end{split}
\end{equation*}

Since for $n=0$, $\| \mu_{0}^{\pi,p} - \mu_{0}^{\pi,q} \|_1 = 0$, by induction we get that
\begin{equation*}
\begin{split}
    \|\mu_n^{\pi,p} - \mu_n^{\pi,q}\|_1  \leq \sum_{i=0}^{n-1} \sum_{x',a'} \mu_i^{\pi,p}(x',a') \| p_{i+1}(\cdot| x',a') - q_{i+1} (\cdot| x',a') \|_1.
    \end{split}
\end{equation*}
\end{proof}

\subsection{Auxiliary result: upper bound on $-\psi$}
Lemma~\ref{upper_bound_gamma} shows that the function $-\psi$, where $\psi$ is the function that induces the Bregman divergence $\Gamma$ according to Proposition~\ref{penalization_is_bregman}, is upper bounded. The definition of $\psi$ is recalled in the lemma.

\begin{lemma}\label{upper_bound_gamma}
    Let $\phi$ be the neg-entropy function defined in Equation~\eqref{neg-entropy}. Let $\psi: (\Delta_{\mathcal{X} \times \mathcal{A}})^N \rightarrow \mathbb{R}$ such that for all $\mu := (\mu_n)_{n \in [N]} \in (\Delta_{\mathcal{X} \times \mathcal{A}})^N $, where we let $\rho := (\rho_n)_{n \in [N]} \in (\Delta_\mathcal{X})^N$ be the associated sequence of marginals,
    \begin{equation*}
        \psi(\mu) := \sum_{n=1}^N \phi(\mu_n) - \sum_{n=1}^N \phi(\rho_n).
    \end{equation*}
    Then, $\sup_{\mu \in (\Delta_{\mathcal{X} \times \mathcal{A}})^N } - \psi(\mu) \leq N \log(|\mathcal{A}|)$.
\end{lemma}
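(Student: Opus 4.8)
The plan is to recognize $-\psi(\mu)$ as a sum of conditional Shannon entropies of the policy, each of which is bounded by the maximum entropy over the finite action set. First I would rewrite $-\psi$ using the already-established identity \eqref{decomposition_psi}, which states
\[
\psi(\mu) = \sum_{n=1}^N \sum_{x,a} \mu_n(x,a) \log\!\left(\frac{\mu_n(x,a)}{\rho_n(x)}\right).
\]
Negating and using that $\mu_n(x,a)/\rho_n(x) = \pi_n(a|x)$ whenever $\rho_n(x)\neq 0$ (and that the corresponding terms vanish otherwise, by the convention $0\log 0 = 0$), I obtain
\[
-\psi(\mu) = \sum_{n=1}^N \sum_{x,a} \mu_n(x,a) \log\!\left(\frac{1}{\pi_n(a|x)}\right).
\]

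Next I would factor each summand through the marginal $\rho_n$. Writing $\mu_n(x,a) = \rho_n(x)\,\pi_n(a|x)$ gives, for each fixed step $n$,
\[
\sum_{x,a} \mu_n(x,a) \log\!\left(\frac{1}{\pi_n(a|x)}\right)
= \sum_{x} \rho_n(x) \left[ \sum_{a} \pi_n(a|x) \log\!\left(\frac{1}{\pi_n(a|x)}\right)\right].
\]
The bracketed quantity is exactly the Shannon entropy of the probability distribution $\pi_n(\cdot|x) \in \Delta_{\mathcal{A}}$ over the finite set $\mathcal{A}$, which is maximized by the uniform distribution and therefore bounded above by $\log(|\mathcal{A}|)$.

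I would then conclude by summing: since $\rho_n \in \Delta_{\mathcal{X}}$ satisfies $\sum_x \rho_n(x) = 1$,
\[
\sum_{x} \rho_n(x) \left[ \sum_{a} \pi_n(a|x) \log\!\left(\frac{1}{\pi_n(a|x)}\right)\right] \leq \log(|\mathcal{A}|) \sum_{x} \rho_n(x) = \log(|\mathcal{A}|),
\]
and summing over the $N$ steps yields $-\psi(\mu) \leq N \log(|\mathcal{A}|)$ for every $\mu$, hence the claimed supremum bound. There is no genuine obstacle here: the only subtlety worth stating carefully is the handling of states $x$ with $\rho_n(x) = 0$, where the policy $\pi_n(\cdot|x)$ is defined arbitrarily (e.g.\ uniform, per Proposition~\ref{opt_mu_equal_pi}) but contributes zero to every sum because of the $\mu_n(x,a) = 0$ weight and the $0\log 0 = 0$ convention; the maximum-entropy inequality on a finite alphabet then does all the remaining work.
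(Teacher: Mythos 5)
Your proof is correct, but it takes a genuinely different route from the paper's. The paper also starts from the decomposition \eqref{decomposition_psi}, but then argues variationally: it forms the Lagrangian $\mathcal{L}(\mu,\lambda) = \psi(\mu) + \sum_n \lambda_n\big(1-\sum_{x,a}\mu_n(x,a)\big)$, solves the first-order conditions to find that a critical point must have uniform conditionals $\mu_n(x,a)/\rho_n(x) = \exp(\lambda_n) = 1/|\mathcal{A}|$, and then concludes $-\psi(\mu) \leq \sum_n\sum_{x,a}\mu_n(x,a)\log(|\mathcal{A}|) = N\log(|\mathcal{A}|)$. You instead rewrite $-\psi(\mu) = \sum_{n}\sum_x \rho_n(x)\sum_a \pi_n(a|x)\log\big(1/\pi_n(a|x)\big)$, i.e.\ a $\rho_n$-weighted average of conditional Shannon entropies, and invoke the maximum-entropy bound on the finite alphabet $\mathcal{A}$. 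Your route buys two things. First, it is more elementary and self-contained: no multipliers, and no (implicit) appeal to concavity of $-\psi$ to promote a stationary point to a global maximizer. Second, it is more careful precisely where the paper's write-up is loose: the pointwise inequality $\log\big(\rho_n(x)/\mu_n(x,a)\big) \leq \log(|\mathcal{A}|)$ is false whenever $\pi_n(a|x) < 1/|\mathcal{A}|$, so the paper's final displayed inequality is only justified through the preceding optimality computation, whereas your averaging step (the entropy of $\pi_n(\cdot|x)$ under its own weights) is exactly the mechanism that makes the bound valid for \emph{every} $\mu$, not just at the maximizer. Your explicit treatment of states with $\rho_n(x)=0$ via the $0\log 0 = 0$ convention also covers the boundary of the simplex, where the Lagrangian computation is not literally valid. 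What the paper's approach buys in exchange is the explicit identification of the maximizer (the uniform policy), showing the constant $N\log(|\mathcal{A}|)$ is attained and hence tight; your argument yields this too, since the entropy bound is an equality exactly when each $\pi_n(\cdot|x)$ is uniform, but the paper makes it explicit.
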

\begin{proof}
    For $\mu \in  (\Delta_{\mathcal{X} \times \mathcal{A}})^N$ and for Lagrangian multipliers $\lambda_n \in \mathbb{R}$ associated to the constraints $\sum_{(x,a) \in \mathcal{X} \times \mathcal{A}} \mu_n(x,a) = 1$ for all $n \in [N]$, consider the Lagrangian given by
\begin{equation*}
    \mathcal{L}(\mu, \lambda) = \psi(\mu) + \sum_n \lambda_n \bigg(1-\sum_{x,a}\mu_n(x,a) \bigg).
\end{equation*}
For every $n \in [N]$, and $(x,a) \in \mathcal{X} \times \mathcal{A}$,
\begin{equation*}
    \frac{\partial \mathcal{L}(\mu, \lambda)}{\partial \mu_n(x,a)} = \log\bigg(\frac{\mu_n(x,a)}{\sum_{a'}\mu_n(x,a')}\bigg) - \lambda_n = 0,
\end{equation*}
thus
\begin{equation*}
     \frac{\mu_n(x,a)}{\sum_{a'}\mu_n(x,a')} = \exp({\lambda_n}).
\end{equation*}
To satisfy the constraint for each $n$, we need
\begin{equation*}
    \sum_{x,a} \mu_n(x,a) = \sum_{x,a} \exp{(\lambda_n)} \sum_{a'} \mu_n(x,a') = \exp{(\lambda_n)} |\mathcal{A}| =1.
\end{equation*}
Using the decomposition of $\psi$ proved in Equation~\eqref{decomposition_psi}, we get
\begin{equation*}
    -\psi(\mu) = \sum_n \sum_{x,a} \mu_n(x,a) \log\bigg(\frac{\sum_{a'} \mu_n(x,a')}{\mu_n(x,a)}\bigg) \leq \sum_n \sum_{x,a} \mu_n(x,a) \log(|\mathcal{A}|) = N \log(|\mathcal{A}|).
\end{equation*}
\end{proof}

\subsection{Proof of Lemma~\ref{lemma:bound_consecutive_p}}
\begin{lemma*}
    For any policy sequence $\pi$, the estimation of the probability kernel for two consecutive episodes done by Greedy MD-CURL satisfies, for all episodes $t \in [T-1]$, the following inequality
    \begin{equation*}
       \| \mu^{\pi, \hat{p}^{t+1}} - \mu^{\pi, \hat{p}^t} \|_{\infty,1} \leq \frac{2 N}{t}.
    \end{equation*}
\end{lemma*}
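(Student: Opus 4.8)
The plan is to reduce the claim to a uniform bound on the one-step discrepancy between the two estimated kernels, and then feed that bound into the $L_1$ stability estimate of Lemma~\ref{lemma:bound_norm_mu}. First I would apply Lemma~\ref{lemma:bound_norm_mu} with $p = \hat{p}^{t+1}$ and $q = \hat{p}^t$, which gives, for every $n$,
$$\|\mu_n^{\pi,\hat{p}^{t+1}} - \mu_n^{\pi,\hat{p}^t}\|_1 \le \sum_{i=0}^{n-1} \sum_{x,a} \mu_i^{\pi,\hat{p}^{t+1}}(x,a)\, \|\hat{p}^{t+1}_{i+1}(\cdot|x,a) - \hat{p}^t_{i+1}(\cdot|x,a)\|_1.$$

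Next I would control each factor $\|\hat{p}^{t+1}_{n}(\cdot|x,a) - \hat{p}^t_{n}(\cdot|x,a)\|_1$ using the explicit incremental update of Greedy MD-CURL (Algorithm~\ref{alg:greedy_MD}). Subtracting $\hat{p}^t_n(\cdot|x,a)$ from the update $\hat{p}^{t+1}_n(\cdot|x,a) = \frac{1}{Mt}\sum_{j=1}^M \delta_{g_n(x,a,\epsilon_n^{j,t})} + \frac{t-1}{t}\hat{p}^t_n(\cdot|x,a)$ yields $\frac{1}{Mt}\sum_{j=1}^M \delta_{g_n(x,a,\epsilon_n^{j,t})} - \frac{1}{t}\hat{p}^t_n(\cdot|x,a)$. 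Applying the triangle inequality and using that each Dirac $\delta_{g_n(x,a,\cdot)}$ and each $\hat{p}^t_n(\cdot|x,a)$ is a probability measure (hence has unit $L_1$ norm, which one checks by induction on $t$ since every update is a convex combination of probability measures), this is bounded by $\frac{1}{t} + \frac{1}{t} = \frac{2}{t}$, uniformly in $n$ and $(x,a)$.

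Finally I would substitute this uniform bound back into the estimate from Lemma~\ref{lemma:bound_norm_mu} and use that $\sum_{x,a}\mu_i^{\pi,\hat{p}^{t+1}}(x,a) = 1$ for each $i$, which collapses the inner double sum and gives
$$\|\mu_n^{\pi,\hat{p}^{t+1}} - \mu_n^{\pi,\hat{p}^t}\|_1 \le \sum_{i=0}^{n-1} \frac{2}{t} = \frac{2n}{t} \le \frac{2N}{t}.$$
Taking the supremum over $0 \le n \le N$ then yields the stated $\|\cdot\|_{\infty,1}$ bound.

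Since every step is an elementary estimate, there is no serious obstacle; the only points requiring care are the bookkeeping of the coefficients $\frac{1}{Mt}$ and $\frac{t-1}{t}$ in the kernel update, which must combine to exactly $\frac{2}{t}$ rather than to something that degrades the telescoping, and the verification that the $\hat{p}^t_n(\cdot|x,a)$ remain genuine probability distributions throughout, so that the unit-norm simplifications are legitimate.
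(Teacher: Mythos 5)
Your proposal is correct and follows essentially the same route as the paper's proof: apply Lemma~\ref{lemma:bound_norm_mu} to the pair $(\hat{p}^{t+1},\hat{p}^t)$, bound each one-step kernel discrepancy by $2/t$ from the incremental update in Algorithm~\ref{alg:greedy_MD}, and sum over the at most $N$ steps. The only cosmetic difference is which of the two estimated kernels supplies the weights $\mu_i$ in the lemma (the paper weights by $\mu_i^{\pi,\hat{p}^t}$, you by $\mu_i^{\pi,\hat{p}^{t+1}}$), which is immaterial since the $2/t$ bound is uniform in $(x,a)$ and the weights sum to one.
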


\begin{proof}
 For all $(x,a, x') \in \mathcal{X} \times \mathcal{A} \times \mathcal{X}$, and for all $i \in [N]$,
\begin{equation}\label{difference_consecutive_p}
\begin{split}
\| \hat{p}_i^{t+1}(\cdot|x,a) - \hat{p}_i^{t}(\cdot|x,a) \|_1 &= \sum_{x' \in \mathcal{X}} | \hat{p}_i^{t+1}(x'|x,a) - \hat{p}_i^{t}(x'|x,a) | \\
&= \sum_{x' \in \mathcal{X}} \bigg|\frac{1}{M t } \bigg(\sum_{j=1}^M \delta_{g_i(x,a,\epsilon_i^{j,t})}(x') + M(t-1) \hat{p}_i^t(x'|x,a) \bigg) - \hat{p}_i^t(x'|x,a) \bigg| \\
        &= \frac{1}{M t}  \sum_{x' \in \mathcal{X}} \bigg| \sum_{j=1}^M \delta_{g_i(x,a,\epsilon_i^{j,t})}(x') - \hat{p}_i^t(x'|x,a) \bigg| \leq \frac{2}{t}.
\end{split}
\end{equation}
Therefore, using the result of Lemma~\ref{lemma:bound_norm_mu} with $\hat{p}_i^{t}$ and $\hat{p}_i^{t+1}$,
\begin{equation*}
\begin{split}
    \| \mu^{\pi, \hat{p}^{t+1}} - \mu^{\pi, \hat{p}^t} \|_{\infty,1} &= \sup_{n \in [N]} \|\mu_n^{\pi, \hat{p}^{t+1}} - \mu_n^{\pi, \hat{p}^{t}} \|_1 \\
    &\leq  \sup_{n \in [N]} \sum_{i=0}^{n-1} \sum_{x,a} \mu_i^{\pi, \hat{p}^t}(x,a)  \| \hat{p}_i^{t+1}(\cdot|x,a) - \hat{p}_i^{t}(\cdot|x,a) \|_1 \\
    &\leq \frac{2 N}{t}.
\end{split}
\end{equation*}
\end{proof}


\subsection{Auxiliary result: bound between distributions induced by $\pi^t$ and $\tilde{\pi}^t$}
\begin{lemma}\label{proof_mu_tilde}
    For all episodes $t \in [T]$, where $\pi^t$ is the strategy calculated using Greedy MD-CURL, $\tilde{\pi}^t = (1-\alpha_t) \pi^t + \frac{\alpha_t}{|\mathcal{A}|}$, and $\hat{p}^t, \hat{p}^{t+1}$ are two consecutive estimates of the probability kernel by Greedy MD-CURL, we have
    \begin{equation*}
         \|\mu^{\pi^t, \hat{p}^t} - \mu^{\tilde{\pi}^t,\hat{p}^{t+1}}\|_{\infty,1} \leq \sup_{n \in [N]} \bigg\{ \sum_{i=1}^{n-1} \sum_{x,a} \mu_{i}^{\pi^t, \hat{p}^t}(x,a) \| \hat{p}_{i+1}^t(\cdot|x,a) - \hat{p}_{i+1}^{t+1}(\cdot|x,a) \|_1 + 2 n \alpha_t\bigg\} .
    \end{equation*}
\end{lemma}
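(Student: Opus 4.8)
The plan is to isolate the two sources of discrepancy between $\mu^{\pi^t,\hat p^t}$ and $\mu^{\tilde\pi^t,\hat p^{t+1}}$ — the change of transition kernel from $\hat p^t$ to $\hat p^{t+1}$, and the change of policy from $\pi^t$ to $\tilde\pi^t$ — by inserting the intermediate distribution $\mu^{\pi^t,\hat p^{t+1}}$ and applying the triangle inequality. Concretely, for every $n$ I would write
\[ \|\mu_n^{\pi^t,\hat p^t} - \mu_n^{\tilde\pi^t,\hat p^{t+1}}\|_1 \le \|\mu_n^{\pi^t,\hat p^t} - \mu_n^{\pi^t,\hat p^{t+1}}\|_1 + \|\mu_n^{\pi^t,\hat p^{t+1}} - \mu_n^{\tilde\pi^t,\hat p^{t+1}}\|_1, \]
bound the two terms separately, and then take the supremum over $n$. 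Choosing $\mu^{\pi^t,\hat p^{t+1}}$ (rather than $\mu^{\tilde\pi^t,\hat p^t}$) as the pivot is precisely what makes the first term come out weighted by $\mu^{\pi^t,\hat p^t}$, as required by the claimed bound.

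For the first term the two distributions share the policy $\pi^t$ and differ only in the kernel, so Lemma~\ref{lemma:bound_norm_mu} applies directly with $p=\hat p^t$ and $q=\hat p^{t+1}$; it produces exactly the kernel-difference sum $\sum_i\sum_{x,a}\mu_i^{\pi^t,\hat p^t}(x,a)\,\|\hat p^t_{i+1}(\cdot|x,a)-\hat p^{t+1}_{i+1}(\cdot|x,a)\|_1$ that appears in the statement.

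For the second term the kernel is fixed to $\hat p^{t+1}$ and only the policy differs, so I would prove an analogue of Lemma~\ref{lemma:bound_norm_mu} for a common kernel and two policies, by an induction on $n$ mirroring its proof. Writing $\mu_n^{\pi,q}(x,a)=\rho_n^{\pi,q}(x)\,\pi_n(a|x)$ with the pre-policy state marginal $\rho_n^{\pi,q}(x)=\sum_{x',a'}\mu_{n-1}^{\pi,q}(x',a')\,q_n(x|x',a')$, I add and subtract $\rho_n^{\pi^t,q}(x)\,\tilde\pi^t_n(a|x)$ to split the step-$n$ error into a policy-mismatch part $\sum_x\rho_n^{\pi^t,q}(x)\,\|\pi^t_n(\cdot|x)-\tilde\pi^t_n(\cdot|x)\|_1$ and a propagated-error part which, after summing over the next state, collapses to $\|\mu_{n-1}^{\pi^t,q}-\mu_{n-1}^{\tilde\pi^t,q}\|_1$. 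Unrolling from the common initial distribution gives $\sum_{i=1}^n\sum_x\rho_i^{\pi^t,q}(x)\,\|\pi^t_i(\cdot|x)-\tilde\pi^t_i(\cdot|x)\|_1$. Since $\tilde\pi^t=(1-\alpha_t)\pi^t+\alpha_t/|\mathcal{A}|$, one checks pointwise that $\|\tilde\pi^t_i(\cdot|x)-\pi^t_i(\cdot|x)\|_1=\alpha_t\,\|\tfrac{1}{|\mathcal{A}|}\mathbf{1}-\pi^t_i(\cdot|x)\|_1\le 2\alpha_t$, and because each $\rho_i^{\pi^t,q}$ is a probability distribution this bounds the second term by $2n\alpha_t$.

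Adding the two bounds and taking $\sup_n$ then delivers the claimed inequality. The most delicate step is the policy-difference induction: one must keep the marginal $\rho_i^{\pi^t,\hat p^{t+1}}$ attached to the \emph{first} policy throughout, and verify that the residual term telescopes cleanly into the $(n-1)$-level $L_1$ distance so that the per-step contributions simply accumulate. The uniform per-step bound $2\alpha_t$ is exactly what the mixing construction in Equation~\eqref{tilde_pi} is designed to guarantee.
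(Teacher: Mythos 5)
Your proof is correct, but it is organized differently from the paper's. The paper proves the lemma in a single combined induction on $n$: it expands both $\mu_n^{\pi^t,\hat p^t}$ and $\mu_n^{\tilde\pi^t,\hat p^{t+1}}$ through the one-step recursion, peels off the policy perturbation (contributing $2\alpha_t$ per step, using $\tilde\pi^t_n = \pi^t_n - \alpha_t(\pi^t_n - 1/|\mathcal{A}|)$), then adds and subtracts $\mu_{n-1}^{\pi^t,\hat p^t}\hat p_n^{t+1}$ to extract the kernel-difference term and the $(n-1)$-level distance, and unrolls — never introducing an intermediate distribution. You instead insert the pivot $\mu^{\pi^t,\hat p^{t+1}}$ and apply the triangle inequality, which splits the problem into two cleaner sub-problems: a pure kernel comparison, disposed of by invoking Lemma~\ref{lemma:bound_norm_mu} as a black box (and with exactly the right weight $\mu^{\pi^t,\hat p^t}$, because you correctly chose the pivot that keeps the common policy attached to the old kernel), and a pure policy comparison under a common kernel, for which you prove a symmetric analogue by induction; your analysis of that induction (policy-mismatch term weighted by $\rho_i^{\pi^t,\hat p^{t+1}}$, propagated term collapsing to the $(n-1)$-level $L_1$ distance, per-step bound $\|\tilde\pi^t_i(\cdot|x)-\pi^t_i(\cdot|x)\|_1 \le 2\alpha_t$) is sound. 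What your route buys is modularity — reuse of an existing lemma plus a reusable policy-perturbation lemma — at the cost of proving one extra auxiliary statement; the paper's route avoids the pivot and the extra lemma but repeats the kernel-comparison argument inline. Both arguments yield the identical bound (both, incidentally, with the kernel sum starting at $i=0$, which is what the paper's own proof also derives despite the statement writing $i=1$).
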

\begin{proof}
Using similar arguments as in Lemma~\ref{lemma:bound_norm_mu}, we get that for all $n \in [N]$,
\begin{equation*}
    \begin{split}
        \|\mu_n^{\pi^t, \hat{p}^t} - \mu_n^{\tilde{\pi}^t,\hat{p}^{t+1}}\|_1 &= \sum_{x,a} \big|\mu_n^{\pi^t, \hat{p}^t}(x,a) - \mu_n^{\tilde{\pi}^t,\hat{p}^{t+1}}(x,a)\big| \\
        &\leq \sum_{x,a} \sum_{x',a'} \bigg| \mu_{n-1}^{\pi^t, \hat{p}^t}(x',a') \hat{p}_n^t(x|x',a') \pi_n^t(a|x) - \mu_{n-1}^{\tilde{\pi}^t,\hat{p}^{t+1}} (x',a') \hat{p}_n^{t+1}(x|x',a') \bigg( (1-\alpha_t) \pi_n^t(a|x) + \alpha_t \frac{1}{|\mathcal{A}|} \bigg) \bigg|  \\
        &\leq \sum_{x, a} \sum_{x',a'} \big| \mu_{n-1}^{\pi^t, \hat{p}^t}(x',a') \hat{p}_n^t(x|x',a') - \mu_{n-1}^{\tilde{\pi}^t,\hat{p}^{t+1}}(x',a') \hat{p}_n^{t+1}(x|x',a') \big| \pi^t_n(a|x) \\
        &\hspace{1.5cm}+ \alpha_t \sum_{x,a} \sum_{x',a'}  \mu_{n-1}^{\tilde{\pi}^t,\hat{p}^{t+1}}(x',a') \hat{p}_n^{t+1}(x|x',a') \bigg| \pi_n^t(a|x) - \frac{1}{|\mathcal{A}|} \bigg| \\
        &\leq \sum_{x} \sum_{x',a'} \big| \mu_{n-1}^{\pi^t, \hat{p}^t}(x',a') \hat{p}_n^t(x|x',a') - \mu_{n-1}^{\pi^t, \hat{p}^t}(x',a') \hat{p}_n^{t+1}(x|x',a') \\
        &\hspace{1.5cm}+ \mu_{n-1}^{\pi^t, \hat{p}^t}(x',a') \hat{p}_n^{t+1}(x|x',a')
        - \mu_{n-1}^{\tilde{\pi}^t,\hat{p}^{t+1}}(x',a') \hat{p}_n^{t+1}(x|x',a') \big| + 2 \alpha_t \\
        &\leq  \sum_{x',a'} \mu_{n-1}^{\pi^t, \hat{p}^t}(x',a') \| \hat{p}_n^t(\cdot|x',a') - \hat{p}_n^{t+1}(\cdot|x',a') \|_1 + \sum_{x',a'} \big| \mu_{n-1}^{\pi^t, \hat{p}^t}(x',a') - \mu_{n-1}^{\tilde{\pi}^t,\hat{p}^{t+1}}(x',a') \big| + 2 \alpha_t\\
        &\leq  \sum_{x',a'} \mu_{n-1}^{\pi^t, \hat{p}^t}(x',a') \| \hat{p}_n^t(\cdot|x',a') - \hat{p}_n^{t+1}(\cdot|x',a') \|_1 + \| \mu_{n-1}^{\pi^t, \hat{p}^t} - \mu_{n-1}^{\tilde{\pi}^t,\hat{p}^{t+1}} \|_1  + 2 \alpha_t\\ 
        &\leq \sum_{i=0}^{n-1} \sum_{x,a} \mu_{i}^{\pi^t,\hat{p}^t}(x,a) \| \hat{p}_{i+1}^t(\cdot|x,a) -\hat{p}_{i+1}^{t+1}(\cdot|x,a) \|_1 + 2 n \alpha_t,
    \end{split}
\end{equation*}
where for the last inequality we use that $\mu_0^{\pi^t,\hat{p}^t} = \mu_0^{\tilde{\pi}^t, \hat{p}^{t+1}}$. To finish we just take the sup over $n \in [N]$.
\end{proof}

\subsection{Proof of Proposition~\ref{thm:bound_regret_md}: upper bound on $R_T^{policy}$}\label{proof:bound_OMD_term}
\begin{proposition*}
Consider an episodic MDP with finite state space $\mathcal{X}$, finite action space $\mathcal{A}$, episodes of length $N$, and probability kernel $p:=(p_n)_{n \in [N]}$. Let $F^t := \sum_{n=1}^N f_n^t$ convex with $f_n^t$ $\ell$-Lipschitz with respect to the norm $\|\cdot\|_1$ for all $n \in [N], t \in [T]$. Let $b$ be defined as in Equation~\eqref{auxiliary_b}. Then, Greedy MD-CURL obtains, for $\tau = \frac{b}{L \sqrt{T}}$,
    \begin{equation*}
                \smash{R_T^{policy} \leq  2 \ell N b \sqrt{T}.}
    \end{equation*}
\end{proposition*}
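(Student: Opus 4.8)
The plan is to read each update \eqref{MD_iteration_unkonwnp} as a single step of online mirror descent in the Bregman geometry of $\Gamma$: by Proposition~\ref{penalization_is_bregman}, $\Gamma$ is the Bregman divergence of the potential $\psi$, which is $1$-strongly convex with respect to $\|\cdot\|_{\infty,1}$. First I would linearise using convexity of each $F^t$, so that $R_T^{policy}\le \sum_{t=1}^T\langle g^t,\mu^t-\mu^*_t\rangle$ with $g^t:=\nabla F^t(\mu^t)$ and $\mu^*_t:=\mu^{\pi^*,\hat p^{t+1}}$. The point to exploit is that $\mu^{t+1}$, its divergence centre $\tilde\mu^t$ (realised by the exponential twist of $\tilde\pi^t$ in the kernel $\hat p^{t+1}$, see Theorem~\ref{MD_explicit_result}), and the comparator $\mu^*_t$ all lie in $\mathcal{M}^{t+1}_{\mu_0}$, so the first-order optimality condition for \eqref{MD_iteration_unkonwnp} holds inside that single set.

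I would then apply the three-point identity for $\Gamma$: optimality of $\mu^{t+1}$ gives $\tau\langle g^t,\mu^{t+1}-\mu^*_t\rangle\le \Gamma(\mu^*_t,\tilde\mu^t)-\Gamma(\mu^*_t,\mu^{t+1})-\Gamma(\mu^{t+1},\tilde\mu^t)$. Decomposing $\langle g^t,\mu^t-\mu^*_t\rangle=\langle g^t,\mu^t-\tilde\mu^t\rangle+\langle g^t,\tilde\mu^t-\mu^{t+1}\rangle+\langle g^t,\mu^{t+1}-\mu^*_t\rangle$, the middle term is paired with $-\tfrac1\tau\Gamma(\mu^{t+1},\tilde\mu^t)$ and bounded, via Fenchel--Young and $1$-strong convexity of $\psi$, by $\tfrac\tau2\|g^t\|_*^2\le\tfrac\tau2 L^2$, the dual bound $\|g^t\|_*\le L=\ell N$ coming from Lemma~\ref{proof_conv_MD_MFC}. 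The residual $\langle g^t,\mu^t-\tilde\mu^t\rangle$ --- the cost of the smoothing \eqref{tilde_pi} and of the kernel refresh $\hat p^t\to\hat p^{t+1}$ --- I would also split with Fenchel--Young into $\tfrac\tau2\|g^t\|_*^2+\tfrac1{2\tau}\|\mu^t-\tilde\mu^t\|_{\infty,1}^2$; Lemma~\ref{proof_mu_tilde} gives $\|\mu^t-\tilde\mu^t\|_{\infty,1}\le 2N(\tfrac1t+\alpha_t)$, so the second summand reproduces the squared entry $\tfrac1\tau N^2(\tfrac1t+\alpha_t)^2$ of $b$ in \eqref{auxiliary_b}.

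It then remains to telescope $\tfrac1\tau\sum_t\big(\Gamma(\mu^*_t,\tilde\mu^t)-\Gamma(\mu^*_t,\mu^{t+1})\big)$. I would re-index so that $-\Gamma(\mu^*_t,\mu^{t+1})$ meets $+\Gamma(\mu^*_{t+1},\tilde\mu^{t+1})$ at the next round; since $\tilde\mu^{t+1}$ and $\mu^{t+1}$ share the kernel $\hat p^{t+1}$ and are induced by policies that differ only through \eqref{tilde_pi}, and both $\mu^*_{t+1},\mu^*_t$ are induced by $\pi^*$, each increment splits, via the log-ratio form of $\Gamma$ in \eqref{gamma}, into a smoothing part controlled by $-\log(1-\alpha_{t+1})\le 2\alpha_{t+1}$ per stage (total $\lesssim N\alpha_t$) and a comparator-drift part $\sum_n\langle\mu^*_{t+1,n}-\mu^*_{t,n},\log(\pi^*_n/\pi^{t+1}_n)\rangle$. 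The latter is bounded by $\|\mu^*_{t+1}-\mu^*_t\|_{\infty,1}\le 2N/t$ (Lemma~\ref{lemma:bound_consecutive_p}) times the log-ratio bound $\log(|\mathcal{A}|/\alpha_t)$ guaranteed by the floor $\tilde\pi^{t+1}\ge\alpha_{t+1}/|\mathcal{A}|$, giving $\lesssim \tfrac{N^2}{t}\log(|\mathcal{A}|/\alpha_t)$, while the leftover boundary divergence is absorbed by Lemma~\ref{upper_bound_gamma} ($\sup(-\psi)\le N\log|\mathcal{A}|$). Summing all contributions rebuilds exactly $\tfrac1\tau\cdot\tfrac{b^2}{2}$, so that $R_T^{policy}\le \tfrac{b^2}{2\tau}+\tau L^2 T$; the choice $\tau=b/(L\sqrt T)$ balances the two sides and yields $R_T^{policy}\le 2\ell N b\sqrt T$.

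\textbf{Main obstacle.} The real difficulty is that both the feasible set $\mathcal{M}^{t+1}_{\mu_0}$ and the comparator $\mu^*_t$ drift every round, and the divergence centre $\tilde\mu^t$ is not the played iterate $\mu^t$. Making the telescoping close therefore demands careful cross-round pairing and, crucially, keeping the log-ratios $\log(\pi^*/\pi^{t+1})$ finite --- exactly what the smoothed policy $\tilde\pi^t$ of \eqref{tilde_pi} secures, at the price of the $N\alpha_t$ exploration terms. Balancing those $\alpha_t$ terms against the $1/t$ kernel-drift terms encoded in $b$, and only then optimising $\tau$, is the crux of the argument.
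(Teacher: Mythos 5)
Your overall architecture is the same as the paper's: linearize $R_T^{policy}$ by convexity, invoke the first-order optimality of \eqref{MD_iteration_unkonwnp} together with the three-point inequality for the Bregman divergence $\Gamma$, control the ``adversarial'' part by Fenchel--Young plus the strong convexity $\Gamma(\mu,\mu')\geq\tfrac12\|\mu-\mu'\|_{\infty,1}^2$ combined with Lemma~\ref{proof_mu_tilde} and the kernel-drift bound, and telescope the remaining Bregman terms before tuning $\tau$. Your handling of the adversarial part is correct and equivalent to the paper's: splitting $\mu^t-\mu^{t+1}$ through $\tilde\mu^t=\mu^{\tilde\pi^t,\hat p^{t+1}}$ and applying Fenchel--Young twice yields exactly the paper's per-round contribution $\tau L^2+\tfrac{2N^2}{\tau}\big(\tfrac1t+\alpha_t\big)^2$.

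The gap is in your telescoping step. Pairing $-\Gamma(\mu^*_t,\mu^{t+1})$ with $+\Gamma(\mu^*_{t+1},\tilde\mu^{t+1})$ and passing through the policy $\pi^{t+1}$ produces the drift term $\sum_n\langle\mu^*_{t+1,n}-\mu^*_{t,n},\log(\pi^*_n/\pi^{t+1}_n)\rangle$, which you bound by $\|\mu^*_{t+1}-\mu^*_t\|_{\infty,1}\cdot\log(|\mathcal{A}|/\alpha_t)$, claiming the log-ratio bound is ``guaranteed by the floor $\tilde\pi^{t+1}\geq\alpha_{t+1}/|\mathcal{A}|$''. But that floor applies to the smoothed policy $\tilde\pi^{t+1}$, not to $\pi^{t+1}$: under the hypotheses of this proposition the exponential-twist output $\pi^{t+1}$ has no a priori lower bound (the paper only proves one, Lemma~\ref{pis_are_positive}, under the extra assumption $-\nabla f_n^t\in[0,1]$, and even then the floor degrades by $\exp(\tau(N-n))$, which would inflate this term beyond what $b$ allows). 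Moreover $\log\pi^*_n$ is not bounded at all; it is only harmless because $\mu^*_{t,n}(x,a)$ and $\mu^*_{t+1,n}(x,a)$ are both proportional to $\pi^*_n(a|x)$, so the weighted sums collapse to entropy-type quantities bounded by $\log|\mathcal{A}|$ --- an argument you do not supply. A minimal repair is to split through the other intermediate point, $\Gamma(\mu^*_{t+1},\tilde\mu^{t+1})-\Gamma(\mu^*_t,\tilde\mu^{t+1})+\Gamma(\mu^*_t,\tilde\mu^{t+1})-\Gamma(\mu^*_t,\mu^{t+1})$, so that the drift sees only the floored $\log\tilde\pi^{t+1}$ and the smoothing cost sees only $\log\big(\pi^{t+1}/\tilde\pi^{t+1}\big)\leq-\log(1-\alpha_{t+1})$; even then the $\log\pi^*$ piece must be treated by the entropy argument and costs an extra $\log|\mathcal{A}|$ factor relative to $b$. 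The paper's decomposition sidesteps all of this: adding and subtracting $\Gamma(\mu^*_t,\tilde\mu^t)$ and $\Gamma(\mu^*_t,\mu^t)$, the terms whose second argument is an unsmoothed iterate telescope exactly (so no lower bound on any $\pi^{t}$ is ever needed), the drift term involves only $\nabla\psi(\tilde\mu^t)=\log\tilde\pi^t$, and the $\log\pi^*$ contributions sit inside $\psi(\mu^*_t)$, which telescopes with boundary term controlled by Lemma~\ref{upper_bound_gamma} and the uniform initialization $\nabla\psi(\mu^1)=0$. (Separately, your final accounting has a harmless factor-2 slip: the pieces you list sum to $b^2/\tau$, not $b^2/(2\tau)$; with $\tau=b/(L\sqrt T)$ either version still gives $R_T^{policy}\leq 2\ell N b\sqrt T$.)
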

\begin{proof}
 Using the convexity of $F^t$,
    \begin{equation*}
    \begin{split}
        R_T^{policy} &= \sum_{t=1}^T F^t(\mu^{\pi^t,\hat{p}^t}) - F^t(\mu^{\pi^*, \hat{p}^{t+1}}) \leq \sum_{t=1}^T \langle l^t, \mu^{\pi^t,\hat{p}^t} - \mu^{\pi^*,\hat{p}^{t+1}} \rangle 
        \end{split}
    \end{equation*}
 where $l^t := \nabla F^t(\mu^{\pi^t,t})$  to shorten notation, and we also use the notation introduced in the main paper $\mu^t := \mu^{\pi^t, \hat{p}^t}$ and $\tilde{\mu}^t := \mu^{\tilde{\pi}^t, \hat{p}^t}$, for all $t \in [T]$.
    We begin by examining Problem~\eqref{MD_iteration_unkonwnp}:
    \begin{equation*}
    \mu^{t+1} \in \argmin_{\mu \in \mathcal{M}_{\mu_0}^{t+1}} \{ \tau \langle l^t, \mu \rangle + \Gamma( \mu, \tilde{\mu}^t) \}.
    \end{equation*}
    Since $F^t$ is a convex function and $\mathcal{M}_{\mu_0}^{t+1}$ is a convex set, the optimality conditions imply that for all $\nu^{t+1} \in \mathcal{M}_{\mu_0}^{t+1}$,
    \begin{equation*}
    \langle \tau l^t + \nabla \psi(\mu^{t+1}) - \nabla \psi(\tilde{\mu}^t), \nu^{t+1} - \mu^{t+1} \rangle \geq 0.
\end{equation*}
Recall that $\psi$ is defined in Proposition~\ref{penalization_is_bregman} as the function inducing the Bregaman divergence $\Gamma$. Re-arranging the terms and using the three points inequality for Bregman divergences \citep{bubeck2014convex} we get that,
\begin{equation*}
\begin{split}
    \tau \langle l^t, \mu^{t+1} - \nu^{t+1} &\rangle \leq \langle \nabla \psi(\mu^{t+1}) - \nabla \psi(\tilde{\mu}^t), \nu^{t+1} - \mu^{t+1} \rangle = \Gamma(\nu^{t+1}, \tilde{\mu}^t) - \Gamma(\nu^{t+1}, \mu^{t+1}) - \Gamma(\mu^{t+1}, \tilde{\mu}^t).
\end{split}
\end{equation*}
This is in particular valid for $\nu^{t+1} := \mu^{\pi^*, \hat{p}^{t+1}}$. Therefore, by adding and subtracting $\tau \langle l^t, \mu^{t} \rangle$ on the left-hand side,
\begin{equation*}
\begin{split}
    &\hspace{0.5cm}\tau \langle l^t, \mu^{t+1} - \nu^{t+1} \rangle + \tau \langle l^t, \mu^t \rangle - \tau \langle l^t, \mu^t \rangle \leq \Gamma(\nu^{t+1}, \tilde{\mu}^t) - \Gamma(\nu^{t+1}, \mu^{t+1}) - \Gamma(\mu^{t+1}, \tilde{\mu}^t) \\
&\Rightarrow \tau \langle l^t, \mu^t - \nu^{t+1} \rangle \leq \tau \langle l^t, \mu^t - \mu^{t+1} \rangle + \Gamma(\nu^{t+1}, \tilde{\mu}^t) - \Gamma(\nu^{t+1}, \mu^{t+1}) - \Gamma(\mu^{t+1}, \tilde{\mu}^t). \\
\end{split}
\end{equation*}
Then, by summing over $t \in [T]$, and by taking $\nu^{t+1} := \mu^{\pi^*, \hat{p}^{t+1}}$, we obtain
\begin{equation}\label{R_MD_partial_bound}
\begin{split}
  R_T^{policy} \leq \underbrace{\frac{1}{\tau}\sum_{t=1}^T \big[\tau \langle l^t, \mu^{t} - \mu^{t+1} \rangle - \Gamma(\mu^{t+1}, \tilde{\mu}^t) \big]}_{A} + \underbrace{\frac{1}{\tau}\sum_{t=1}^T \big[ \Gamma(\nu^{t+1}, \tilde{\mu}^t) - \Gamma(\nu^{t+1}, \mu^{t+1}) \big]}_{B}.
\end{split}
\end{equation}
The term $A$ appears due to our lack of knowledge of $F^t$ at the beginning of episode $t$ for all episodes. To remedy this, we use Young's inequality and the strong convexity of $\Gamma$. Note that if we were to consider the case where all $F^t$ are known in advance, we would not have to deal with the $A$ term. As for the term $B$, in the classic OMD proof \citep{OMD} where the set of constraints is fixed the sum of the difference between the Bregman divergences telescopes (as we would with a fixed $\nu$). However, since we are considering time-varying constraint sets, this does not happen in our case. We now proceed to find an upper bound for each term.

\paragraph{Step $1$: upper bound on $B$}
We begin by analyzing the second term of the sum in Equation~\eqref{R_MD_partial_bound}. Recall that $\nu^t:= \mu^{\pi^*, \hat{p}^t}$ for all $t \in [T]$. In order to make the Bregman divergence terms telescope we add and subtract $\Gamma(\nu^t, \mu^t) - \Gamma(\nu^t, \tilde{\mu}^t)$, obtaining 
\begin{equation*}
    \sum_{t=1}^T \Gamma(\nu^{t+1}, \tilde{\mu}^t) - \Gamma(\nu^{t+1}, \mu^{t+1}) = \underbrace{\sum_{t=1}^T \Gamma(\nu^{t+1}, \tilde{\mu}^t) - \Gamma(\nu^t, \tilde{\mu}^t)}_{(i)} + \underbrace{\sum_{t=1}^T \Gamma(\nu^t, \tilde{\mu}^t) - \Gamma(\nu^t, \mu^t)}_{(ii)} + \underbrace{\sum_{t=1}^T \Gamma(\nu^t, \mu^t) - \Gamma(\nu^{t+1}, \mu^{t+1})}_{(iii)}.
\end{equation*}
We now analyze each term. Using the definition of a Bregman divergence induced by $\psi$ we get that
\begin{equation*}
\begin{split}
    (i) &= \sum_{t=1}^T \psi(\nu^{t+1}) - \psi(\tilde{\mu}^t) - \langle \nabla \psi(\tilde{\mu}^t), \nu^{t+1} - \tilde{\mu}^t \rangle - \psi(\nu^{t}) + \psi(\tilde{\mu}^t) + \langle \nabla \psi(\tilde{\mu}^t), \nu^{t} - \tilde{\mu}^t \rangle \\
    &=\sum_{t=1}^T \psi(\nu^{t+1}) - \psi(\nu^t) + \sum_{t=1}^T \langle \nabla \psi(\tilde{\mu}^t), \nu^t - \nu^{t+1} \rangle \\
    &\leq - \psi(\nu^1) + \sum_{t=1}^T \|\nabla \psi(\tilde{\mu}^t) \|_{1, \infty} \| \nu^t - \nu^{t+1}\|_{\infty,1}, 
\end{split}
\end{equation*}
where in the last inequality we used that the first term telescopes and we apply Holder's inequality to the second term. Recall that for $v := (v_n)_{n \in [N]}$ such that $v_n \in \mathbb{R}^{\mathcal{X} \times \mathcal{A}}$, we defined $\|v\|_{\infty,1} := \sup_{ n \in [N]} \|v_n\|_1$. We now also define $\|\zeta\|_{1,\infty} := \sup_{v} \{ |\langle \zeta, v \rangle |, \|v\|_{\infty,1} \leq 1 \} = \sup_{n \in [N]} \|\zeta_n\|_1$ as the respective dual norm.

With our choice of Bregman divergence, and given the definition of $\tilde{\pi}$ in Equation~\eqref{tilde_pi}, for each $n \in [N], (x,a) \in \mathcal{X} \times \mathcal{A}$, $|\nabla \psi(\tilde{\mu}^t)(n,x,a)| = |\log(\tilde{\pi}^t_n(a|x))| \leq \log(|\mathcal{A}|/\alpha_t)$. Plugging this result with the result of Lemma~\ref{lemma:bound_consecutive_p} into the bound of $(i)$ we obtain that
\begin{equation*}
    (i) \leq - \psi(\nu^1) + \sum_{t=1}^T N \log\bigg(\frac{|\mathcal{A}|}{\alpha_t}\bigg) \|\mu^{\pi^*, \hat{p}^t} - \mu^{\pi^*, \hat{p}^{t+1}}\|_{\infty,1} \leq  - \psi(\nu^1) + 2 N^2 \sum_{t=1}^T \log\bigg(\frac{|\mathcal{A}|}{\alpha_t}\bigg) \frac{1}{t}.
\end{equation*}

As for the second term, using our definition of $\Gamma$, we obtain that 
\begin{equation*}
\begin{split}
    (ii) &= \sum_{t=1}^T \sum_{n, x,a} \mu_n^{\pi^*, \hat{p}^t}(x,a) \log \bigg(\frac{\pi_n^*(a|x)}{\tilde{\pi}_n^t(a|x)} \bigg) - \sum_{n, x,a} \mu_n^{\pi^*, \hat{p}^t}(x,a) \log \bigg(\frac{\pi_n^*(a|x)}{\pi_n^t(a|x)} \bigg) \\
    &= \sum_{t=1}^T \sum_{n,x,a} \mu_n^{\pi^*, \hat{p}^t}(x,a) \log \bigg(\frac{\pi_n^t(a|x)}{\tilde{\pi}_n^t(a|x)} \bigg) \\
    &= \sum_{t=1}^T \sum_{n,x,a} \mu_n^{\pi^*, \hat{p}^t}(x,a) \log \bigg(\frac{\pi_n^t(a|x)}{(1-\alpha_t) \pi_n^t(a|x) + \alpha/|\mathcal{A}|} \bigg) \\
    &\leq N \sum_{t=1}^T (-\log(1-\alpha_t)) \leq 2 N \sum_{t=1}^T \alpha_t,
\end{split}
\end{equation*}
where the last inequality is valid if $0 \leq \alpha_t \leq 0.5$. 

It is easy to see that the third term telescopes, therefore, as $-\Gamma(\nu^{T+1}, \mu^{T+1}) \leq 0$ as a Bregman divergence is always positive,
\begin{equation*}
    (iii) \leq \Gamma(\nu^1,\mu^1).
\end{equation*}

Before adding back the three terms, note that, for $\mu^1$ initialized such that $\nabla\psi(\mu^1) = 0$, we have $\Gamma(\nu^1, \mu^1) - \psi(\nu^1) = -\psi(\mu^1)$. Furthermore, from Lemma~\ref{upper_bound_gamma}, $-\psi(\mu^1) \leq N \log(|\mathcal{A}|)$. Therefore, 
\begin{equation}\label{bound_max_entropy}
    \Gamma(\nu^{1}, \mu^{1}) -  \psi(\nu^{1}) \leq N \log(|\mathcal{A}|).
\end{equation}

Summing over our bounds and using the Inequality~\eqref{bound_max_entropy}, we get that $B$ is upper bounded as
\begin{equation}\label{final_bound_gammas}
     \frac{1}{\tau }\sum_{t=1}^T \big[ \Gamma(\nu^{t+1}, \tilde{\mu}^t) - \Gamma(\nu^{t+1}, \mu^{t+1}) \big] \leq \frac{1}{\tau} \big[ (i) + (ii) + (iii) \big] \leq \frac{N}{\tau} \log(|\mathcal{A}|) + \frac{2 N^2}{\tau} \sum_{t=1}^T \log\bigg(\frac{|\mathcal{A}|}{\alpha_t} \bigg) \frac{1}{t} + \frac{2 N}{\tau} \sum_{t=1}^T \alpha_t.
\end{equation}

\paragraph{Step $2$: Upper bound on $A$}
It remains to delimit the first term of the bound in $R_T^{policy}$ in Equation~\eqref{R_MD_partial_bound} given by
\begin{equation}\label{adversarial_term}
    A = \frac{1}{\tau} \bigg[\sum_{t=1}^T \tau \langle l^t, \mu^{t} - \mu^{t+1} \rangle - \Gamma(\mu^{t+1}, \tilde{\mu}^t)\bigg],
\end{equation}
representing what we pay for not knowing the loss function in advance. For that we use Young's inequality \citep{MD}. 

Recall that Young's inequality states that for any $\sigma > 0$, for any dual norms,
\begin{equation*}
    \langle a, b \rangle \leq \frac{1}{2 \sigma} \|a\|^2 + \frac{\sigma}{2} \|b\|^2_*.
\end{equation*}
Therefore, for any $\sigma > 0$ to be optimized later, and for each episode $t \in [T]$,
\begin{equation}\label{inequality_after_young}
    \tau \langle l^t, \mu^{t} - \mu^{t+1} \rangle - \Gamma(\mu^{t+1}, \tilde{\mu}^t) \leq \frac{\tau^2 \|l^t\|^2_{1, \infty}}{2 \sigma} + \frac{\sigma}{2} \|\mu^{t} - \mu^{t+1} \|_{\infty,1}^2 - \Gamma(\mu^{t+1}, \tilde{\mu}^t),
\end{equation}
where recall that for $v := (v_n)_{n \in [N]}$ such that $v_n \in \mathbb{R}^{\mathcal{X} \times \mathcal{A}}$, we defined $\|v\|_{\infty,1} := \sup_{ n \in [N]} \|v_n\|_1$, and we let $\|\zeta\|_{1,\infty} := \sup_{v} \{ |\langle \zeta, v \rangle |, \|v\|_{\infty,1} \leq 1 \} = \sup_{n \in [N]} \|\zeta_n\|_1$ as the respective dual norm.

From Lemma~\ref{prop:D_decomp} and inequality~\eqref{gamma_strong_convex} stating the strong convexity of $\psi$, we have that for all $t \in [T]$ 
\begin{equation}\label{gamma_bound_norm_2}
\begin{split}
    \Gamma(\mu^{t+1}, \tilde{\mu}^{t}) &= D(\mu_{1:N}^{t+1}, \mu_{1:N}^{\tilde{\pi}^{t},\hat{p}^{t+1}}) \geq \frac{1}{2} \|\mu^{t+1} - \mu^{\tilde{\pi}^{t},\hat{p}^{t+1}} \|_{\infty,1}^2
\end{split}
\end{equation}
where recall that $\mu_{1:N}$ is the joint state-action distribution while that $\mu := (\mu_n)_{n \in[N]}$ is the sequence of state-action distributions. 

Using that for any vectors $a,b,c \in \mathbb{R}^d$, and that for any norm $\|\cdot\|$, $\|a-b\|^2 \leq 2 \big(\|a-c\|^2 + \|b-c\|^2\big)$, we then have by Equation~\eqref{gamma_bound_norm_2}
\begin{equation}\label{Gamma_Young_bound_ourcase}
\begin{split}
       \frac{1}{4} \|\mu^{t} - \mu^{t+1} \|_{\infty,1}^2  - \Gamma(\mu^{t+1}, \tilde{\mu}^{t}) &\leq \frac{1}{4} \|\mu^{t} - \mu^{t+1} \|_{\infty,1}^2 - \frac{1}{2}\| \mu^{\tilde{\pi}^t,\hat{p}^{t+1}} - \mu^{t+1}\|_{\infty,1}^2 \\
   &\leq \frac{1}{2} \big( \|\mu^{t} - \mu^{\tilde{\pi}^t,\hat{p}^{t+1}} \|_{\infty,1}^2 + \|\mu^{\tilde{\pi}^t,\hat{p}^{t+1}} - \mu^{t+1} \|_{\infty,1}^2 \big)  - \frac{1}{2}\| \mu^{\tilde{\pi}^t,\hat{p}^{t+1}} - \mu^{t+1}\|_{\infty,1}^2 \\
   &= \frac{1}{2} \|\mu^{t} - \mu^{\tilde{\pi}^t,\hat{p}^{t+1}} \|_{\infty,1}^2. 
\end{split}
\end{equation}

To bound $\|\mu^{t} - \mu^{\tilde{\pi}^t,\hat{p}^{t+1}} \|^2_{\infty,1}$ we first use Lemma~\ref{proof_mu_tilde} which gives
\begin{equation*}
    \|\mu^{t} - \mu^{\tilde{\pi}^t,\hat{p}^{t+1}} \|_{\infty,1} \leq \sum_{n=1}^N \sum_{x,a} \mu_i^t(x,a) \|\hat{p}^t_{i+1}(\cdot|x,a) - \hat{p}^{t+1}_{i+1}(\cdot|x,a) \|_1 + 2 N \alpha_t.
\end{equation*}
Then, by Equation~\eqref{difference_consecutive_p}, $\|\hat{p}^t_{i+1}(\cdot|x,a) - \hat{p}^{t+1}_{i+1}(\cdot|x,a) \|_1 \leq 2/t$ for all $t \in [T]$, therefore
\begin{equation*}
    \|\mu^{t} - \mu^{\tilde{\pi}^t,\hat{p}^{t+1}} \|_{\infty,1}^2 \leq  \bigg(\frac{2 N}{t} + 2 N \alpha_t \bigg)^2.
\end{equation*}

Therefore, plugging into Equation~\eqref{inequality_after_young} with $\sigma = 1/2$ yields,
\begin{equation*}
    \tau \langle l^t, \mu^{t} - \mu^{t+1} \rangle - \Gamma(\mu^{t+1}, \tilde{\mu}^t)\leq \tau^2 \|l^t\|^2_{1, \infty} + \frac{1}{2}  \bigg(\frac{2 N}{t} + 2 N \alpha_t \bigg)^2.
\end{equation*}

Summing over $t \in [T]$, and $\|l^t\|_{1,\infty} \leq L := l N$ as showed in Lemma~\ref{proof_conv_MD_MFC} then entails:
\begin{equation}\label{final_bound_adversarial}
    A \leq \tau L^2 T + \frac{1}{2 \tau} \sum_{t=1}^T \bigg(\frac{2 N}{t} + 2 N \alpha_t \bigg)^2.
\end{equation}

\paragraph{Conclusion}

Finally, by replacing the final bounds of Equations~\eqref{final_bound_gammas} and~\eqref{final_bound_adversarial}, we obtain 
\begin{equation*}
    R_T^{policy} \leq A + B \leq \tau T  L^2 + \frac{2 N^2}{\tau} \sum_{t=1}^T \bigg(\frac{1}{t} + \alpha_t \bigg)^2 +  \frac{N}{\tau} \log(|\mathcal{A}|) + \frac{2 N^2}{\tau}  \sum_{t=1}^T \log\bigg(\frac{|\mathcal{A}|}{\alpha_t} \bigg) \frac{1}{t} + \frac{2 N}{\tau} \sum_{t=1}^T \alpha_t.
\end{equation*}

Let
\begin{equation*}
    b := \bigg( 2 N^2 \bigg[ \sum_{t=1}^T \bigg(\frac{1}{t} + \alpha_t \bigg)^2 + \sum_{t=1}^T \log\bigg(\frac{|\mathcal{A}|}{\alpha_t}\bigg) \frac{1}{t}\bigg] + 2 N \sum_{t=1}^T \alpha_t + N \log(|\mathcal{A}|) \bigg)^{\frac{1}{2}}.
\end{equation*}
Optimising over $\tau = \frac{b}{L \sqrt{T}}$, 
\begin{equation*}
     R_T^{policy} \leq 2 L b \sqrt{T} = 2 l N b \sqrt{T},
\end{equation*}
concluding the proof.

In particular, if $\alpha_t = \frac{1}{T}$ for all $t \in [T]$, we have $R_T^{policy} \leq \sqrt{T} \log(T)$.

\end{proof}




\section{Bounds with unknown $g$}\label{bounds_unknown_g}
Now suppose that $g_n$ and $h_n$ in the model of Equation~\eqref{dynamics} are unknown. In this case, we have no information about the probability kernel, and the exploration/exploitation dilemma arises. 

In order to learn the complete probability kernel, we need to modify the learning model slightly. Let us suppose that, at each episode $t$ the learner maintains the number of visit counts to each episode $(x,a)$ at time step $n$, denoted $N_n^t(x,a)$, and the number of times this event is followed by a transition to a state $x'$, denoted $M_n^t(x'|x,a)$, that is
\begin{equation*}
    \begin{split}
        M_n^t(x'|x,a) &= \sum_{s=1}^t \mathds{1}_{\{x_{n+1}^s = x', x_n^s = x, a_n^s = a\}} \\
        N_n^t(x,a) &= \sum_{s=1}^t \mathds{1}_{\{x_n^s = x, a_n^s = a\}}.
    \end{split}
\end{equation*}
To ease notations, we take $M=1$ in this section. We define $\hat{p}^t$, at each $(x,a)$ and time step $n+1$ by
\begin{equation}\label{proba_counts}
    \hat{p}_{n+1}^t(x'|x,a) = \frac{M_n^t(x'|x,a)}{\max\{1,N_n^t(x,a)\}}.
\end{equation}
The following lemma ensures the true probability kernel $p$ lies at a certain distance from this estimation of $\hat{p}^t$ with high probability. 

\begin{lemma}[\cite{UCRL-2, neu12}]\label{lemma:proba_difference}
    For any $0 < \delta < 1$, 
    \begin{equation*}
    \begin{split}
        \|p_n(\cdot|x,a) - \hat{p}_n^t(\cdot|x,a)\|_1 \leq 
        \sqrt{\frac{4 |\mathcal{X}|\log\left(\frac{|\mathcal{X}| |\mathcal{A}| N T}{\delta}\right)}{\max{\{1,N_n^t(x,a)}\}} }
    \end{split}
    \end{equation*}
    holds, with a probability of at least $1-\delta$, for simultaneously all $(x,a) \in \mathcal{X} \times \mathcal{A}$, all $n \in [N]$, and all episodes $t \in [T]$.
\end{lemma}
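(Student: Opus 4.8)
The plan is to prove Lemma~\ref{lemma:proba_difference} as a standard concentration bound on empirical transition probabilities, following the approach of \cite{UCRL-2} and \cite{neu12}. The key observation is that $\hat{p}_n^t(\cdot|x,a)$ is an empirical distribution over $\mathcal{X}$ built from $N_{n-1}^t(x,a)$ i.i.d. samples (the observed next-states following visits to $(x,a)$ at step $n-1$), and we want to control its $L_1$ (equivalently, total-variation) deviation from the true distribution $p_n(\cdot|x,a)$, uniformly over all $(x,a,n,t)$.

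First I would fix a single triple $(x,a,n)$ and condition on the event that $(x,a)$ has been visited exactly $m$ times at step $n-1$. On this event, $M_{n-1}^t(\cdot|x,a)/m$ is the empirical mean of $m$ i.i.d. categorical draws from $p_n(\cdot|x,a)$. The main tool is the $L_1$-deviation inequality for empirical distributions (the standard result, e.g. Weissman et al. / Lemma used in \cite{UCRL-2}): for an empirical distribution $\hat{q}_m$ over a finite set of size $|\mathcal{X}|$ formed from $m$ samples of $q$,
\begin{equation*}
\mathbb{P}\big( \|q - \hat{q}_m\|_1 \geq \xi \big) \leq 2^{|\mathcal{X}|} \exp\!\Big( -\tfrac{m \xi^2}{2} \Big).
\end{equation*}
Setting the right-hand side equal to a target failure probability and solving for $\xi$ gives a bound of the form $\xi \simeq \sqrt{2(|\mathcal{X}|\log 2 + \log(1/\cdot))/m}$; the coefficient $4|\mathcal{X}|$ inside the square root in the statement absorbs the $|\mathcal{X}|\log 2$ term together with the logarithmic factor into a clean bound.

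Next I would apply a union bound. The delicate point — and the main obstacle — is handling the random, data-dependent count $N_{n-1}^t(x,a)$: we cannot directly invoke a fixed-sample concentration bound because the number of samples is itself random and correlated with the observations. The standard remedy is a \emph{peeling} or \emph{union-over-counts} argument: union-bound the per-$m$ inequality over all possible values $m \in \{1,\ldots,t\}$ of the visit count (and over all $(x,a)$, all steps $n \in [N]$, and all episodes $t \in [T]$), so that the bound holds simultaneously whatever value $N_{n-1}^t(x,a)$ actually takes. The $\max\{1,N_{n-1}^t(x,a)\}$ in the denominator guards against the zero-visit case, where the bound holds vacuously (left side at most $2$, and with the convention the right side is large). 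Collecting the union-bound factors of $|\mathcal{X}||\mathcal{A}|$, $N$, $T$ (and the count range) into the single logarithmic term $\log(|\mathcal{X}||\mathcal{A}| N T/\delta)$ yields the stated inequality with probability at least $1-\delta$ simultaneously over all $(x,a)$, $n\in[N]$, and $t\in[T]$. Since this is a known result cited from the literature, I would keep the write-up brief: state the per-sample $L_1$ concentration inequality, invert it, and carry out the union/peeling bound, referring to \cite{UCRL-2, neu12} for the details.
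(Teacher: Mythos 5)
The paper does not actually prove Lemma~\ref{lemma:proba_difference}: it is imported verbatim from the cited works (\cite{UCRL-2, neu12}), so there is no in-paper argument to compare against. Your reconstruction is precisely the canonical proof behind that citation, and it is correct: Weissman-type $L_1$ concentration for empirical categorical distributions, inversion of the tail bound, and a union bound that handles the data-dependent visit counts. You also correctly identify the one genuinely delicate point, namely that $N_{n-1}^t(x,a)$ is random and correlated with the samples, and the standard fix of proving the fixed-$m$ bound for the i.i.d. sequence of post-visit transitions and then unioning over count values. One small streamlining remark: you do not need to union over both the count values $m$ and the episodes $t$; unioning over $(x,a)$, $n$, and $m\in\{1,\dots,T\}$ already gives the $|\mathcal{X}||\mathcal{A}|NT$ factor appearing inside the logarithm, and the bound at every episode $t$ follows automatically because $\hat{p}_n^t$ coincides with the empirical distribution at count $m=N_{n-1}^t(x,a)$. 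Your redundant double union only costs an extra $\log T$, which is still absorbed by the constant $4|\mathcal{X}|$, so this does not affect correctness.
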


Recall that the regret $R_T$ is decomposed as $R_T := R_T^{MDP}\big((\pi^t)_{t \in [T]}) \big) + R_T^{policy} + R_T^{MDP}\big(\pi^*\big)$, and we treat each term separately. The regret bound for  $R_T^{policy}$ follows the same procedure as in Proposition~\ref{thm:bound_regret_md}. However, the bound on the terms of $R_T^{MDP}$ are different, as we must now ensure that we visit all necessary state-action pairs $(x,a)$ sufficiently often. This also means that the bound for $R_T^{MDP}\big((\pi^t)_{t \in [T]}) \big)$ is different from the bound of $R_T^{MDP}\big(\pi^*\big)$. For bounding both terms related to $R_T^{MDP}$ we use a similar approach as in UC-O-REPS \citep{UC-O-REPS}.

\begin{lemma}\label{lemma:martingale}
For $0 < \delta < 1$,
    \begin{equation*}
    \begin{split}
        &\sup_{n \in [N]} \sum_{t=1}^T \sum_{i=0}^{n-1} \sum_{x,a} \mu_i^{\pi^t, p}(x,a) \|p_{i+1}(\cdot|x,a) - \hat{p}_{i+1}^t(\cdot|x,a) \|_1 \\
        &\leq (\sqrt{2} +1) N |\mathcal{X}| \sqrt{4 |\mathcal{A}| T \log\bigg(\frac{T |\mathcal{X}||\mathcal{A}|N}{\delta}\bigg)} + 2 N |\mathcal{X}| \sqrt{2 T \log\bigg(\frac{N}{\delta}\bigg)}
    \end{split}
    \end{equation*}
with probability $1-2\delta$.
\end{lemma}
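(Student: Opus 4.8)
The plan is to bound the double sum by separating the randomness of the visited trajectories from the deterministic estimation error controlled by Lemma~\ref{lemma:proba_difference}. The key observation is that the quantity $\sum_{i=0}^{n-1}\sum_{x,a}\mu_i^{\pi^t,p}(x,a)\|p_{i+1}(\cdot|x,a)-\hat{p}_{i+1}^t(\cdot|x,a)\|_1$ is an expectation (under $\mu_i^{\pi^t,p}$, the \emph{true} state-action distribution at episode $t$) of a per-pair error term, whereas the visit counts $N_n^t(x,a)$ appearing in Lemma~\ref{lemma:proba_difference} reflect the \emph{empirical} trajectory actually sampled. So the first step is to relate the expected occupancy $\mu_i^{\pi^t,p}(x,a)$ to the indicator of the realized trajectory $\mathds{1}_{\{x_i^t=x,a_i^t=a\}}$. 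Writing $\mu_i^{\pi^t,p}(x,a)=\mathbb{E}[\mathds{1}_{\{x_i^t=x,a_i^t=a\}}\mid \mathcal{F}_{t-1}]$, the difference between the expectation and the realized indicator summed over $t$ forms a martingale difference sequence, whose increments are bounded by $N$ (summing the $\ell_1$ kernel distance, which is at most $2$, over the $n\le N$ steps). I would apply the Azuma–Hoeffding inequality to this martingale, which produces the additive term $2N|\mathcal{X}|\sqrt{2T\log(N/\delta)}$ at confidence $1-\delta$ (the $N$ inside the log from a union bound over steps $n$).

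Once the expected occupancies are replaced by realized visits (up to the martingale remainder), the second step is to invoke Lemma~\ref{lemma:proba_difference} on the event of probability $1-\delta$, substituting the bound $\|p_{i+1}(\cdot|x,a)-\hat{p}_{i+1}^t(\cdot|x,a)\|_1\le\sqrt{4|\mathcal{X}|\log(|\mathcal{X}||\mathcal{A}|NT/\delta)/\max\{1,N_i^t(x,a)\}}$. After this substitution the core quantity to control becomes, for each step, $\sum_{t=1}^T\sum_{x,a}\mathds{1}_{\{x_i^t=x,a_i^t=a\}}/\sqrt{\max\{1,N_i^t(x,a)\}}$. The third step is the standard pigeonhole/harmonic-sum argument used in UCRL-type analyses: since the indicator selects exactly the pair whose count is being incremented, the inner sum telescopes into $\sum_{(x,a)}\sum_{k=1}^{N_i^T(x,a)}1/\sqrt{k}\le\sum_{(x,a)}2\sqrt{N_i^T(x,a)}$, and then Cauchy–Schwarz over the $|\mathcal{X}||\mathcal{A}|$ pairs together with $\sum_{(x,a)}N_i^T(x,a)=T$ yields a bound of order $\sqrt{|\mathcal{X}||\mathcal{A}|T}$. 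Multiplying by the $\sqrt{4|\mathcal{X}|\log(\cdot)}$ factor and by the $N$ from summing over steps $i$ (and taking the sup over $n\le N$) produces the leading term proportional to $N|\mathcal{X}|\sqrt{4|\mathcal{A}|T\log(T|\mathcal{X}||\mathcal{A}|N/\delta)}$.

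The final step is bookkeeping: combine the two high-probability events by a union bound to get overall confidence $1-2\delta$, and collect the constants. The $(\sqrt{2}+1)$ prefactor on the main term should emerge from carefully tracking the $\sum_k 1/\sqrt{k}\le 1+\int_1^N k^{-1/2}dk = 1 + 2(\sqrt N - 1)\le 2\sqrt N$ style estimate combined with the Cauchy–Schwarz constant, and possibly from splitting off the $N_i^t(x,a)=0$ (first-visit) contributions. I expect the main obstacle to be the first step: cleanly justifying the exchange between the true occupancy measure $\mu_i^{\pi^t,p}$ and the realized empirical trajectory. The subtlety is that the policy $\pi^t$ is itself $\mathcal{F}_{t-1}$-measurable (it depends on past observations through $\hat{p}^t$), so one must verify the martingale structure with respect to the correct filtration and confirm that the conditional expectation of the realized indicator really is $\mu_i^{\pi^t,p}(x,a)$; the concentration for counts in Lemma~\ref{lemma:proba_difference} must then hold on this same probability space simultaneously over all $t,n,x,a$, which is exactly why the union-bound logarithmic factors carry the $T|\mathcal{X}||\mathcal{A}|N$ inside. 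The harmonic-sum and Cauchy–Schwarz manipulations, by contrast, are routine and I would not expand them in detail.
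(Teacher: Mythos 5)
Your proposal is correct and follows essentially the same route as the paper's proof: the same decomposition of the true occupancy into the realized indicator plus a martingale remainder, Azuma--Hoeffding (with a union bound over steps) for the remainder giving the $2N|\mathcal{X}|\sqrt{2T\log(N/\delta)}$ term, Lemma~\ref{lemma:proba_difference} on the realized trajectory, and the pigeonhole/harmonic-sum counting bound (the paper cites Lemma~19 of \cite{UCRL-2}, which is exactly your $\sum_k 1/\sqrt{k}$ estimate plus Cauchy--Schwarz) for the leading term, combined by a union bound into confidence $1-2\delta$. Your worry about the filtration is also resolved exactly as you suggest: $\pi^t$ and $\hat{p}^t$ are $\mathcal{F}_{t-1}$-measurable, so conditionally on the past the episode-$t$ trajectory is distributed according to $\mu^{\pi^t,p}$, which is precisely what makes the remainder a martingale difference sequence.
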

\begin{proof}
    Using Lemma $19$ from \cite{UCRL-2}, we have that
    \begin{equation*}
        \sum_{t=1}^T \frac{\mathds{1}_{\{x_n^t=x, a_n^t=a\}}}{N_n^t(x,a)} \leq (\sqrt{2} + 1) \sqrt{N_n^T(x,a)},
    \end{equation*}
    and by Jensen's inequality,
    \begin{equation}\label{bound_N}
        \sum_{x,a} \sum_{t=1}^T \frac{\mathds{1}_{\{x_n^t=x, a_n^t=a\}}}{N_n^t(x,a)} \leq (\sqrt{2} + 1) \sum_{x,a} \sqrt{|\mathcal{X}| |\mathcal{A}| T}.
    \end{equation}

    Let $(x_n^t,a_n^t)_{n\in[N]}$ be the trajectory made by policy $\pi^t$ for all $t \in [T]$. Therefore,
    \begin{align}\label{decomposition_martingale_t}
        &\sum_{i=0}^{n-1} \sum_{x,a} \mu_i^{\pi^t,p}(x,a)\|p_{i+1}(\cdot|x,a) - \hat{p}_{i+1}^t(\cdot|x,a) \|_1 \\
        &\leq \sum_{i=0}^{n-1} \|p_{i+1}(\cdot|x_i^t,a_i^t) - \hat{p}_{i+1}^t(\cdot|x_i^t,a_i^t) \|_1 + \sum_{i=0}^{n-1} \sum_{x,a} \big( \mu_i^{\pi^t, p}(x,a) - \mathds{1}_{\{x_i^t=a,a_i^t=a\}} \big)  \|p_{i+1}(\cdot|x,a) - \hat{p}_{i+1}^t(\cdot|x,a) \|_1 \nonumber
    \end{align}
    By Lemma~\ref{lemma:proba_difference}, with probability at least $1-\delta$, simultaneously for all $i \in [N]$ we have
    \begin{align}\label{proba_bound_counts}
        \sum_{t=1}^T \|p_{i+1}(\cdot|x_i^t,a_i^t) - \hat{p}_{i+1}^t(\cdot|x_i^t,a_i^t) \|_1 &\leq \sum_{t=1}^T \sqrt{\frac{4 |\mathcal{X}| \log\bigg(\frac{T |\mathcal{X}| |\mathcal{A}| N}{\delta} \bigg)}{\max{\{1, N_i^t(x_i^t,a_i^t)\}}}} \nonumber \\
        &\leq \sum_{x,a} \sum_{t=1}^T \mathds{1}_{\{x_i^t=x, a_i^t=a\}} \sqrt{\frac{4 |\mathcal{X}| \log\bigg(\frac{T |\mathcal{X}| |\mathcal{A}| N}{\delta} \bigg)}{\max{\{1, N_i^t(x,a)\}}}} \nonumber \\
        &\leq (\sqrt{2} +1) \sqrt{4 |\mathcal{X}|^2 |\mathcal{A}| T \log\bigg(\frac{T |\mathcal{X}| |\mathcal{A}| N}{\delta}\bigg)},
        \end{align}
where, for the last inequality, we use the result of Equation~\eqref{bound_N}.

As for the second term, note that for all $i \in [N]$ and $x \in \mathcal{X}$, 
\begin{equation*}
 \bigg( \sum_a \big(\mu_i^{\pi^t, p}(x,a) - \mathds{1}_{\{x_i^t=x,a_i^t=a\}} \big) \bigg)   
\end{equation*}
forms a martingale difference with respect to the trajectory $(x_0^s,a_0^s, \ldots, x_N^s, a_N^s)_{s \in [T]}$ (the expectation of the term conditional on the past trajectory is zero). Therefore, by Azuma-Hoeffding inequality,
\begin{equation*}
    \mathbb{P} \bigg[\sum_{t=1}^T \sum_a \big(\mu_i^{\pi^t, p}(x,a) - \mathds{1}_{\{x_i^t=x,a_i^t=a\}} \big) \geq \epsilon \bigg] \leq \exp\bigg(\frac{-2 \epsilon^2}{4 T} \bigg).
\end{equation*}
Taking the union bound over $i \in [N]$, we get that with probability $1 -\delta$, simultaneously for all $i \in [N]$, and considering that $\|p_{i+1}(\cdot|x,a) - p_{i+1}^t(\cdot|x,a) \|_1 \leq 2$, 
\begin{equation}\label{martingale_bound_counts}
\begin{split}
        \sum_{t=1}^T \sum_{x,a}  \big(\mu_i^{\pi^t, p}(x,a) - \mathds{1}_{x_i^t=x,a_i^t=a} \big) \|p_{i+1}(\cdot|x,a) - \hat{p}_{i+1}^t(\cdot|x,a) \|_1  \leq 2 |\mathcal{X}| \sqrt{2 T \log\bigg(\frac{N}{\delta}\bigg)}. 
\end{split}
\end{equation}
    Plugging the bounds on Equation~\eqref{proba_bound_counts} and~\eqref{martingale_bound_counts} into Equation~\eqref{decomposition_martingale_t}, we get that with probability $1-2\delta$, 
    \begin{equation*}
    \begin{split}
                &\sup_{n \in [N]} \sum_{t=1}^T \sum_{i=0}^{n-1} \sum_{x,a} \mu_i^{\pi^t, p} \|p_{i+1}(\cdot|x,a) - \hat{p}_{i+1}^t(\cdot|x,a) \|_1 \\
                &\leq \sup_{n \in [N]} \sum_{i=0}^{n-1} \bigg[ (\sqrt{2} +1)  \sqrt{4 |\mathcal{X}|^2 |\mathcal{A}| T \log\bigg(\frac{T |\mathcal{X}||\mathcal{A}|N}{\delta}\bigg)} + 2 |\mathcal{X}| \sqrt{2 T \log\bigg(\frac{N}{\delta}\bigg)} \bigg] \\
                &\leq (\sqrt{2} +1) N |\mathcal{X}| \sqrt{4 |\mathcal{A}| T \log\bigg(\frac{T |\mathcal{X}||\mathcal{A}|N}{\delta}\bigg)} + 2 N |\mathcal{X}| \sqrt{2 T \log\bigg(\frac{N}{\delta}\bigg)}.
    \end{split}
    \end{equation*}
\end{proof}

The result of Lemma~\ref{lemma:martingale} allows us to state the following proposition bounding the term $R_T^{MDP} \big( ( \pi^t)_{t \in [T]} \big)$:
\begin{proposition}\label{bound_rt_pit}
   We consider an episodic MDP with finite state space $\mathcal{X}$, finite action space $\mathcal{A}$, episodes of length $N$, and probability kernel $p:=(p_n)_{n \in [N]}$.  We let $F^t := \sum_{n=1}^N f_n^t$ convex with $f_n^t$ $\ell$-Lipschitz with respect to the norm $\|\cdot\|_1$ for all $n \in [N], t \in [T]$. We consider the probability estimation per iteration as in Equation~\eqref{proba_counts}. Then, with probability $1-\delta$, Greedy MD-CURL obtains,
    \begin{equation*}
        R_T^{MDP} \big( (\pi^t)_{t \in [T]} \big) \leq (\sqrt{2} +1) \ell N^2 |\mathcal{X}| \sqrt{4 |\mathcal{A}| T \log\bigg(\frac{T |\mathcal{X}||\mathcal{A}|N}{\delta}\bigg)} + 2 \ell N^2 |\mathcal{X}| \sqrt{2 T \log\bigg(\frac{N}{\delta}\bigg)}.
    \end{equation*}
\end{proposition}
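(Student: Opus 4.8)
The plan is to mirror the proof of Proposition~\ref{prop:bound_regret_proba}, but since the noise-based concentration of Lemma~\ref{concentration_inner_mu} is no longer available once $g_n$ is unknown, I would replace it by the $L_1$ machinery of Lemma~\ref{lemma:bound_norm_mu} and Lemma~\ref{lemma:martingale}. First I would use the convexity of each $F^t$ to linearize, writing
\[
R_T^{MDP}\big((\pi^t)_{t\in[T]}\big) \leq \sum_{t=1}^T \langle \nabla F^t(\mu^{\pi^t,p}), \mu^{\pi^t,p} - \mu^{\pi^t,\hat{p}^t}\rangle = \sum_{t=1}^T \sum_{n=1}^N \langle \nabla f_n^t(\mu^{\pi^t,p}), \mu_n^{\pi^t,p} - \mu_n^{\pi^t,\hat{p}^t}\rangle.
\]
Then, by Hölder's inequality together with the fact that $\ell$-Lipschitzness of $f_n^t$ with respect to $\|\cdot\|_1$ forces $\|\nabla f_n^t\|_\infty \leq \ell$ (exactly as invoked in Proposition~\ref{prop:bound_regret_proba}), each inner product is at most $\ell\,\|\mu_n^{\pi^t,p} - \mu_n^{\pi^t,\hat{p}^t}\|_1$.

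Next I would apply Lemma~\ref{lemma:bound_norm_mu} to each $L_1$ distance, bounding $\|\mu_n^{\pi^t,p} - \mu_n^{\pi^t,\hat{p}^t}\|_1$ by the occupancy-weighted kernel error $S_t(n):=\sum_{i=0}^{n-1}\sum_{x,a}\mu_i^{\pi^t,p}(x,a)\,\|p_{i+1}(\cdot|x,a)-\hat{p}_{i+1}^t(\cdot|x,a)\|_1$. The key bookkeeping step is to collapse the sum over $n$: for each fixed $t$ the quantity $S_t(n)$ is non-decreasing in $n$ because every summand is non-negative, so $\sum_{n=1}^N S_t(n)\leq N\,S_t(N)$, and by the same monotonicity $\sum_{t=1}^T S_t(N)=\sup_{n\in[N]}\sum_{t=1}^T S_t(n)$. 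Combining these yields
\[
R_T^{MDP}\big((\pi^t)_{t\in[T]}\big) \leq \ell N \sup_{n\in[N]} \sum_{t=1}^T S_t(n),
\]
which is precisely $\ell N$ times the left-hand side of Lemma~\ref{lemma:martingale}. Substituting that lemma's bound reproduces the two stated terms verbatim, with the extra factor $N$ turning each $N$ into $N^2$.

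Since Lemma~\ref{lemma:martingale} (resting on the count-based confidence bound of Lemma~\ref{lemma:proba_difference}, Lemma~19 of \cite{UCRL-2}, and an Azuma--Hoeffding argument for the occupancy-to-trajectory martingale) already performs the heavy lifting, the main obstacle here is organizational rather than analytic: interchanging the summations so that the supremum over $n$ sits \emph{outside} the episode sum in the exact form demanded by Lemma~\ref{lemma:martingale}, and verifying via monotonicity that this supremum is attained at $n=N$. I would also flag the minor probability bookkeeping, namely that Lemma~\ref{lemma:martingale} holds on an event of probability $1-2\delta$, so the $1-\delta$ in the statement follows after the usual rescaling of $\delta$. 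Note finally that this scheme is tailored to the played sequence $(\pi^t)$, whose induced occupancies $\mu_i^{\pi^t,p}$ are exactly what the martingale concentration tracks; the companion term $R_T^{MDP}(\pi^*)$ is comparator-dependent and is therefore handled separately.
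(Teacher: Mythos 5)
Your proposal is correct and takes essentially the same route as the paper's proof: linearize via convexity of $F^t$, apply H\"older together with the Lipschitz bound on the gradients, control each $L_1$ deviation by Lemma~\ref{lemma:bound_norm_mu}, and conclude with Lemma~\ref{lemma:martingale}; your explicit monotonicity argument for collapsing the sum over $n$ (so that the supremum sits outside the episode sum) is in fact a cleaner justification of the paper's direct $\sup$--$\sum$ interchange. Your flag on the $1-2\delta$ versus $1-\delta$ probability is a genuine, if minor, bookkeeping discrepancy that the paper's own proof also glosses over.
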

\begin{proof}

    Recall that, given the convexity of $F^t$ and by applying Holder's inequality using that if $f_n^t$ are $\ell$-Lipschitz with respect to $\|\cdot\|_1$ then $F$ is $L$-Lipschitz with respect to $\|\cdot\|_{\infty,1}$ for $L = \ell N$ (see Lemma~\ref{proof_conv_MD_MFC}), we obtain that
    \begin{equation*}
\begin{split}
    R_T^{MDP}\big((\pi^t)_{t \in [T]}\big) &\leq \sum_{t=1}^T \langle \nabla F^t(\mu^{\pi^t,p}), \mu^{\pi^t,p} - \mu^{\pi^t,\hat{p}^t} \rangle \\
    &\leq \sum_{t=1}^T \|\nabla F^t(\mu^{\pi^t,p})\|_* \sup_{n \in [N]} \|\mu_n^{\pi^t,p} - \mu_n^{\pi^t,\hat{p}^t} \|_1 \\
    &\leq L \sup_{n \in [N]}  \sum_{t=1}^T\|\mu_n^{\pi^t,p} - \mu_n^{\pi^t,\hat{p}^t} \|_1.
\end{split}
\end{equation*}
The result then follows from the application of Lemma~\ref{lemma:bound_norm_mu} and Lemma~\ref{lemma:martingale}.
\end{proof}

To complete the bound on the regret $R_T$, we need to bound $R_T^{MDP} \big( \pi^* \big)$. For this, we need Lemma~\ref{pis_are_positive}, which states that the Greedy MD-CURL algorithm always computes policies that are lower bounded if $-\nabla f_n^t(x,a)(\mu_n) \in [0,1]$ for all $(x,a) \in \mathcal{X} \times \mathcal{A}$, all $\mu_n \in \Delta_{\mathcal{X} \times \mathcal{A}}$, $n \in [N]$ and $t \in [T]$. Proposition~\ref{bound_rt_pi*} states the bound for $R_T^{MDP} \big( \pi^* \big)$.

\begin{lemma}\label{pis_are_positive}
    Let $(\pi^t)_{t \in [T]}$ be the sequence of policies obtained after computing $T$ episodes of Greedy MD-CURL with $\alpha_t \in (0,1/2)$ and objective functions $F^t = \sum_{n=1}^N f_n^t$ such that $-\nabla f_n^t(\mu_n)(x,a) \in [0,1]$. Consequently, there is $\xi \in (0,1)$ such that for all $(x,a) \in \mathcal{X} \times \mathcal{A}$, for all $n \in [N]$, and for all episodes $t \in [T]$, $\pi_n^t(a|x) \geq \xi$. 
\end{lemma}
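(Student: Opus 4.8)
The plan is to show that the regularized $Q$-function $\tilde Q^t_n$ produced inside one step of MD-CURL stays in the bounded range $[0,N]$, and then to read off a uniform lower bound on each $\pi^t_n(a|x)$ directly from the exponential-twist form of the policy update.

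First I would rewrite the Bellman recursion~\eqref{Bellman_Q_tilde} (here driven by $\tilde\pi^t$ and $\hat p^{t+1}$) in closed form by solving the inner maximization. Since the maximization over $\pi_{n+1}(\cdot|x')$ decouples across states $x'$ on the simplex, the standard variational identity for the entropy-regularized (log-sum-exp) objective gives the soft value
\begin{equation*}
V^t_{n+1}(x') := \frac{1}{\tau}\log\Big(\sum_{a'}\tilde\pi^t_{n+1}(a'|x')\exp\bigl(\tau\tilde Q^t_{n+1}(x',a')\bigr)\Big),
\end{equation*}
so that $\tilde Q^t_n(x,a) = -\nabla f^t_n(\mu^t_n)(x,a) + \sum_{x'}\hat p^{t+1}_{n+1}(x'|x,a)\,V^t_{n+1}(x')$. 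Because the weights $\tilde\pi^t_{n+1}(\cdot|x')$ sum to one, the soft value is sandwiched between the smallest and largest of the $\tilde Q^t_{n+1}(x',\cdot)$, i.e. $\min_{a'}\tilde Q^t_{n+1}(x',a')\le V^t_{n+1}(x')\le\max_{a'}\tilde Q^t_{n+1}(x',a')$.

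Next I would run a backward induction on $n$ to prove $0\le\tilde Q^t_n(x,a)\le N-n+1$. The base case $\tilde Q^t_N = -\nabla f^t_N(\mu^t_N)\in[0,1]$ is exactly the hypothesis $-\nabla f^t_n(\mu_n)(x,a)\in[0,1]$. For the inductive step, the sandwiching of $V^t_{n+1}$ together with $\sum_{x'}\hat p^{t+1}_{n+1}(x'|x,a)=1$ and $-\nabla f^t_n\in[0,1]$ yields $0\le\tilde Q^t_n\le 1+(N-n)$; in particular $\tilde Q^t_n(x,a)\in[0,N]$ for every $n$, $(x,a)$, and $t$. Finally I would plug these bounds into the update~\eqref{policy_update}, which in Greedy MD-CURL reads $\pi^{t+1}_n(a|x)=\tilde\pi^t_n(a|x)\exp(\tau\tilde Q^t_n(x,a))/\sum_{a'}\tilde\pi^t_n(a'|x)\exp(\tau\tilde Q^t_n(x,a'))$. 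Using $\tilde\pi^t_n(a|x)\ge\alpha_t/|\mathcal A|$ from~\eqref{tilde_pi} and $\tilde Q^t_n\ge0$ lower bounds the numerator by $\alpha_t/|\mathcal A|$, while $\tilde Q^t_n\le N$ and $\sum_{a'}\tilde\pi^t_n(a'|x)=1$ upper bounds the denominator by $e^{\tau N}$; hence $\pi^{t+1}_n(a|x)\ge(\alpha_t/|\mathcal A|)e^{-\tau N}$. Taking $\xi:=\min\bigl(\min_{n,x,a}\pi^1_n(a|x),\,(\min_{t\in[T]}\alpha_t/|\mathcal A|)\,e^{-\tau N}\bigr)$ (with $\pi^1$ positively initialized) gives the claimed uniform lower bound over all $t\in[T]$.

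The main obstacle is the first step: one must recognize the entropy-regularized inner maximization as a log-sum-exp and verify that the soft value inherits the $[\min,\max]$ bounds of $\tilde Q^t_{n+1}$. This is precisely what keeps the otherwise dangerous $-\frac{1}{\tau}\log(\pi_{n+1}/\tilde\pi^t_{n+1})$ term from blowing up, and is what makes the final bound independent of how small $\tilde\pi^t$ may become. The uniformity over $t$ is then only a matter of noting that $\min_{t\in[T]}\alpha_t>0$ for a finite horizon, so $\xi\in(0,1)$.
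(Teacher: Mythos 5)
Your proof is correct and follows essentially the same route as the paper's: lower-bound $\tilde\pi^t_n(a|x)$ by $\alpha_t/|\mathcal{A}|$, bound the regularized $Q$-function so that $\exp(\tau\tilde Q^t_n)\in[1,e^{\tau N}]$, and read off the lower bound from the exponential-twist update. The only differences are to your credit: you actually prove the bounds $0\le\tilde Q^t_n\le N-n+1$ via the log-sum-exp closed form of the inner maximization and backward induction (the paper merely asserts $1\le\exp(\tau\tilde Q^t_n)\le\exp(\tau(N-n))$, which is even off by one at $n=N$, though harmlessly so), and you explicitly include the initial policy $\pi^1$ in the definition of $\xi$, which the paper's choice of $\xi$ silently omits.
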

\begin{proof}
    At each episode $t$, we compute $\pi^{t+1} := \text{MD-CURL}(1, \tilde{\pi}^t \backslash \pi^t , F^t, \hat{p}^{t+1}, \mu_0, \tau),$ where $\tilde{\pi}^t = (1-\alpha_t) \pi^t + \alpha_t \frac{1}{|\mathcal{A}|}$, and the other parameters are defined in Algorithm~\ref{alg:greedy_MD}. 

    From its definition, we can see that $\tilde{\pi}^t_n(a|x) \geq  \frac{\alpha_t}{|\mathcal{A}|}$. The closed form solution of one iteration of MD-CURL with the given parameters gives
    \begin{equation*}
        \pi_n^{t+1}(a|x) = \frac{\tilde{\pi}^t_n(a|x) \exp\big(\tau \tilde{Q}^t_n(x,a)\big)}{\sum_{a' \in \mathcal{A}} \tilde{\pi}^t_n(a'|x) \exp\big(\tau \tilde{Q}^t_n(x,a')\big)  },
    \end{equation*}
    where $\tilde{Q}^t_n(x,a)$ is defined in Equation~\eqref{Bellman_Q_tilde} with $-\nabla f_n^t(x,a)(\mu_n^t)$ and $\tilde{\pi}^t$ at the place of $\pi^k$. As $-\nabla f_n^t(x,a)(\mu_n^t) \in [0,1]$, we have $1 \leq \exp\big(\tau \tilde{Q}^t_n(x,a')\big) \leq \exp(\tau (N-n))$. Therefore, we have $\pi_n^{t+1}(a|x) \geq \frac{\alpha_t}{|\mathcal{A}| \exp(\tau (N-n))}$ for all steps $n \in [N]$ and $(x,a)$. 

    Taking $\xi := \min_{t \in [T], n \in [N]} \frac{\alpha_t}{|\mathcal{A}| \exp(\tau (N-n))} = \frac{1}{|\mathcal{A}| \exp(\tau N)} \min_{t \in [T]} \alpha_t$, we then have for all $(x,a) \in \mathcal{X} \times \mathcal{A}$, for all $n \in [N]$, and for all episodes $t \in [T]$, $\pi_n^t(a|x) \geq \xi$.
\end{proof}

\begin{proposition}\label{bound_rt_pi*}
   We consider an episodic MDP with finite state space $\mathcal{X}$, finite action space $\mathcal{A}$, episodes of length $N$, and probability kernel $p:=(p_n)_{n \in [N]}$. We let $F^t := \sum_{n=1}^N f_n^t$ convex with $f_n^t$ $\ell$-Lipschitz with respect to the norm $\|\cdot\|_1$ for all $n \in [N], t \in [T]$. We let $\xi$ be the lower bound of $\pi_n^t(a|x)$ for all $n, t, (x,a)$ defined as in Lemma~\ref{pis_are_positive}. We consider the probability estimation per iteration as in Equation~\eqref{proba_counts}. Then, with probability $1-2 \delta$, Greedy MD-CURL obtains,
    \begin{equation*}
        R_T^{MDP} \big( \pi^* \big) \leq  \frac{1}{\xi^N}  \bigg[(\sqrt{2} +1) \ell N^2 |\mathcal{X}| \sqrt{4 |\mathcal{A}| T \log\bigg(\frac{T |\mathcal{X}||\mathcal{A}|N}{\delta}\bigg)} + 2 \ell N^2 |\mathcal{X}| \sqrt{2 T \log\bigg(\frac{N}{\delta}\bigg)} \bigg].
    \end{equation*}
\end{proposition}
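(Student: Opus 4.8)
The plan is to mirror the argument of Proposition~\ref{bound_rt_pit}, but with an extra change-of-measure step that transfers the occupancy of the comparator $\pi^*$ onto the occupancies of the played policies $(\pi^t)_{t\in[T]}$, so that the concentration machinery of Lemma~\ref{lemma:martingale} becomes applicable. First I would use the convexity of each $F^t$ to write
$$R_T^{MDP}(\pi^*) = \sum_{t=1}^T F^t(\mu^{\pi^*,\hat p^{t+1}}) - F^t(\mu^{\pi^*,p}) \leq \sum_{t=1}^T \langle \nabla F^t(\mu^{\pi^*,\hat p^{t+1}}), \mu^{\pi^*,\hat p^{t+1}} - \mu^{\pi^*,p}\rangle,$$
and then apply Holder's inequality together with the $L$-Lipschitz bound $L=\ell N$ from Lemma~\ref{proof_conv_MD_MFC} to reduce the problem to controlling $L\,\sup_{n\in[N]}\sum_{t=1}^T \|\mu_n^{\pi^*,p} - \mu_n^{\pi^*,\hat p^{t+1}}\|_1$. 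Invoking Lemma~\ref{lemma:bound_norm_mu} with the two kernels $p$ and $\hat p^{t+1}$ expresses each of these $L_1$ deviations as a $\mu_i^{\pi^*,p}$-weighted sum of the kernel discrepancies $\|p_{i+1}(\cdot|x,a)-\hat p_{i+1}^{t+1}(\cdot|x,a)\|_1$.

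The crucial --- and genuinely new --- step is that these weights are the occupancies of $\pi^*$, which is never played, whereas the visit counts $N_n^t$ driving Lemma~\ref{lemma:proba_difference} and the martingale argument of Lemma~\ref{lemma:martingale} are generated by the played policies $\pi^t$. To bridge this gap I would perform an importance-weighting argument trajectory by trajectory: writing $\mu_i^{\pi,p}(x,a)$ as a sum over paths $(y_0,\dots,y_{i-1},(x,a))$ of $\mu_0(y_0)\prod_{j=1}^{i} p_j(x_j|x_{j-1},a_{j-1})\pi_j(a_j|x_j)$, the ratio of the $\pi^*$-weight to the $\pi^t$-weight of a fixed path equals $\prod_{j=1}^{i}\pi^*_j(a_j|x_j)/\pi^t_j(a_j|x_j)$. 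Using Lemma~\ref{pis_are_positive}, every denominator factor is at least $\xi$ while every numerator factor is at most $1$, so this ratio is bounded by $\xi^{-i}\leq \xi^{-N}$ uniformly in $i\leq N$. Summing over paths yields the pointwise domination $\mu_i^{\pi^*,p}(x,a)\leq \xi^{-N}\,\mu_i^{\pi^t,p}(x,a)$ for every $(x,a)$, every step $i$, and every episode $t$.

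Plugging this domination into the weighted sum replaces the comparator occupancies by $\xi^{-N}$ times the played occupancies, after which $\sup_{n}\sum_t\sum_{i=0}^{n-1}\sum_{x,a}\mu_i^{\pi^t,p}(x,a)\|p_{i+1}(\cdot|x,a)-\hat p_{i+1}^{t+1}(\cdot|x,a)\|_1$ is exactly the quantity controlled by Lemma~\ref{lemma:martingale} (up to the harmless shift from $\hat p^t$ to $\hat p^{t+1}$, which only reindexes the episode sum and leaves the counting and Azuma-Hoeffding bounds unchanged). This holds with probability $1-2\delta$, and multiplying the resulting bound by $L\,\xi^{-N}=\ell N\,\xi^{-N}$ gives the claimed estimate. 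The hard part is the change-of-measure step: it is the reason the extra assumption $-\nabla f_n^t(\mu_n)\in[0,1]$ and the lower bound $\xi$ are needed at all, and it is also the source of the unavoidable exponential factor $\xi^{-N}$, which reflects the cost of never exploring along the comparator's own trajectory.
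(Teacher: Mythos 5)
Your proposal is correct and follows essentially the same route as the paper's proof: convexity plus H\"older's inequality reduces $R_T^{MDP}(\pi^*)$ to $L\sup_{n}\sum_{t}\|\mu_n^{\pi^*,p}-\mu_n^{\pi^*,\hat p^t}\|_1$, Lemma~\ref{lemma:bound_norm_mu} rewrites this as $\mu_i^{\pi^*,p}$-weighted kernel discrepancies, the lower bound $\xi$ from Lemma~\ref{pis_are_positive} gives the change of measure onto the played policies, and Lemma~\ref{lemma:martingale} concludes. The only cosmetic difference is that you establish the domination $\mu_i^{\pi^*,p}\leq \xi^{-i}\mu_i^{\pi^t,p}$ by a path-wise importance-weighting argument and pull out a uniform factor $\xi^{-N}$ so as to invoke Lemma~\ref{lemma:martingale} as a black box, whereas the paper proves the same domination by induction on the occupancy recursion and keeps $\xi^{-i}$ inside the step sum, bounding $\sum_{i=0}^{N-1}\xi^{-i}\leq N\xi^{-N}$ at the end; both yield the identical final bound.
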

\begin{proof}
     Recall that, given the convexity of $F^t$ and by applying Holder's inequality using that if $f_n^t$ are $\ell$-Lipschitz with respect to $\|\cdot\|_1$ then $F$ is $L$-Lipschitz with respect to $\|\cdot\|_{\infty,1}$ for $L = \ell N$ (see Lemma~\ref{proof_conv_MD_MFC}), we obtain that
    \begin{equation*}
\begin{split}
    R_T^{MDP}\big(\pi^* \big) &\leq \sum_{t=1}^T \langle \nabla F^t(\mu^{\pi^*,\hat{p}^t}), \mu^{\pi^*,\hat{p}^t} - \mu^{\pi^*,p} \rangle \\
    &\leq \sum_{t=1}^T \|\nabla F^t(\mu^{\pi^t,p})\|_* \sup_{n \in [N]} \|\mu_n^{\pi^*,\hat{p}^t} - \mu_n^{\pi^*,p} \|_1 \\
    &\leq L \sup_{n \in [N]}  \sum_{t=1}^T\|\mu_n^{\pi^*,\hat{p}^t} - \mu_n^{\pi^*,p} \|_1.
\end{split}
\end{equation*}
As $\pi_n^t(a|x) \geq \xi$ for all $n \in [N], t\in [T]$ and $(x,a) \in \mathcal{X} \times \mathcal{A}$, we get $\frac{\mu_n^{\pi^*,p}(x,a)}{\mu_n^{\pi^t,p}(x,a)} \leq \frac{1}{\xi^n}$. This can be demonstrated recursively: suppose it's true for $n$, then for $n+1$, by definition
\begin{equation*}
    \begin{split}
        \mu_{n+1}^{\pi^t,p}(x,a) &= \sum_{x',a'} \mu_n^{\pi^t,p}(x',a') p_{n+1}(x|x',a') \pi_{n+1}^t(a|x) \\
        &\leq \sum_{x',a'}\xi^n \mu_n^{\pi^*,p}(x',a') p_{n+1}(x|x',a') \pi_{n+1}^*(a|x) \xi \\
        &= \mu_{n+1}^{\pi^*,p}(x,a) \xi^{n+1}.
    \end{split}
\end{equation*}

Using Lemma~\ref{lemma:bound_norm_mu}, and Proposition~\ref{bound_rt_pit}, we get
\begin{equation*}
\begin{split}
        \sup_{n\in [N]} \sum_{t=1}^T\|\mu_n^{\pi^*,\hat{p}^t} - \mu_n^{\pi^*,p} \|_1 &\leq   \sup_{n\in [N]}  \sum_{t=1}^T \sum_{i=0}^{n-1} \sum_{x,a} \mu_i^{\pi^*, p}(x,a)\|p_{i+1}(\cdot|x,a) - p_{i+1}^t(\cdot|x,a) \|_1 \\
        &\leq   \sup_{n\in [N]}  \sum_{t=1}^T \sum_{i=0}^{n-1} \frac{1}{\xi^i} \sum_{x,a} \mu_i^{\pi^t, p}(x,a)\|p_{i+1}(\cdot|x,a) - p_{i+1}^t(\cdot|x,a) \|_1 \\
        &\leq \sup_{n\in [N]} \sum_{i=0}^{n-1} \frac{1}{\xi^i} \bigg[ (\sqrt{2} +1)  \sqrt{4 |\mathcal{X}|^2 |\mathcal{A}| T \log\bigg(\frac{T |\mathcal{X}||\mathcal{A}|N}{\delta}\bigg)} + 2 |\mathcal{X}| \sqrt{2 T \log\bigg(\frac{N}{\delta}\bigg)} \bigg] \\
        &\leq \frac{N}{\xi^N} \bigg[ (\sqrt{2} +1)  \sqrt{4 |\mathcal{X}|^2 |\mathcal{A}| T \log\bigg(\frac{T |\mathcal{X}||\mathcal{A}|N}{\delta}\bigg)} + 2 |\mathcal{X}| \sqrt{2 T \log\bigg(\frac{N}{\delta}\bigg)} \bigg]
\end{split}
\end{equation*}
where the third inequality is obtained by following the same steps of the proof from Lemma~\ref{lemma:martingale}.
\end{proof}

\paragraph{Conclusion: bounding $R_T$}
We join the propositions~\ref{bound_rt_pit} and~\ref{bound_rt_pi*} bounding $R_T^{MDP} \big( (\pi^t)_{t \in [T]} \big)$ and $R_T^{MDP} \big( \pi^* \big)$ when playing Greedy MD-CURL with $g_n$ and $h_n$ unknown, and Proposition~\ref{thm:bound_regret_md} bounding $R_T^{policy}$ which remains general regardless of prior knowledge of $g_n$ and $h_n$. 

Here, we show regret in terms of the number of episodes $T$ and do not worry about other constant terms. We use $\lesssim$ to denote an inequality up to constant or logarithmic terms independent of $T$. To simplify, we take $\alpha_t = \alpha$ for all $t \in [T]$. Therefore, $\alpha$ and $\tau$ are the parameters to be optimized. We hypothesize that $\alpha < 1$, $T \alpha \geq \log\big( \frac{1}{\alpha} \big) \log(T)$, and $\tau \leq \frac{1}{N}$. We will later verify when the optimized $\alpha$ and $\tau$ satisfy these conditions.

From Proposition~\ref{bound_rt_pit}, we have 
\[R_T^{MDP} \big( (\pi^t)_{t \in [T]} \big) \lesssim  \sqrt{T \log(T)}. \] 
From Proposition~\ref{thm:bound_regret_md},  
\[R_T^{policy} \lesssim \tau T + \frac{T}{\tau} \alpha^2 + \frac{1}{\tau} \log\bigg(\frac{1}{\alpha} \bigg) \log(T) + \frac{T}{\tau} \alpha \lesssim \tau T +  \frac{T}{\tau} \alpha.\]
From Proposition~\ref{bound_rt_pi*}, we have
\[R_T^{MDP} \big( \pi^* \big) \lesssim \xi^{-N} \sqrt{T \log(T)} \lesssim \alpha^{-N} \sqrt{T \log(T)},\]
where $\xi^{-1} = \bigg(\frac{\alpha}{|\mathcal{A}| \exp(\tau N)}\bigg) \lesssim \alpha^{-1}$.

Therefore,
\begin{equation*}
    R_T = R_T^{MDP} \big( (\pi^t)_{t \in [T]} \big) + R_T^{policy} + R_T^{MDP} \big( \pi^* \big) \lesssim \tau T + \frac{T}{\tau} \alpha + \alpha^{-N} \sqrt{T \log(T)}.
\end{equation*}

We first optimize over $\tau$. For $\tau = \alpha^{\frac{1}{2}}$, then 
\[
R_T \lesssim \alpha^{\frac{1}{2}} T + \alpha^{-N} \sqrt{T \log(T)}.
\]
Then, we optimize over $\alpha$. For $\alpha = T^{-\frac{1}{2N +1}}$, 
\[
R_T \lesssim T^{\frac{4N + 1}{4N +2}}.
\]

If $T > 1$, then the conditions $\alpha <1$ and $T \alpha \geq \log\big( \frac{1}{\alpha} \big) \log(T)$ are satisfied. For $T \geq N^{4N +2}$, then $\tau \leq 1/N$. 

In a classic non-episodic online learning scenario, or in an episodic MDP with stationary probability kernels, we would not incur the term on $\xi^N$ but only $\xi$. This would reduce the final regret bound to $O \big( T^\frac{5}{6} \big)$ for any $T \geq 1$. That is for example the case of the showcase experiments in Section~\ref{experiments} and Appendix~\ref{more_experiments}.

\section{Additional experiments}\label{more_experiments}

\subsection{MD-CURL known probability kernel}

We present the state distribution induced by the policies computed with MD-CURL in the offline optimization scenario when both $g_n$ and $h_n$ are known for varying steps $n$ and iterations $k$. The episode length is fixed to $N=100$ for all experiments. We illustrate the Entropy Maximization problem in Figure~\ref{MD-CURL_maxentropy_knownp} and the Multi-Objective problem in Figure~\ref{MD-CURL_multiobj_knownp}, and show that MD-CURL achieves the main goal in both cases.

\begin{figure}[H]
     \centering
     \begin{subfigure}[h]{0.3\textwidth}
         \centering
         \includegraphics[width=\textwidth]{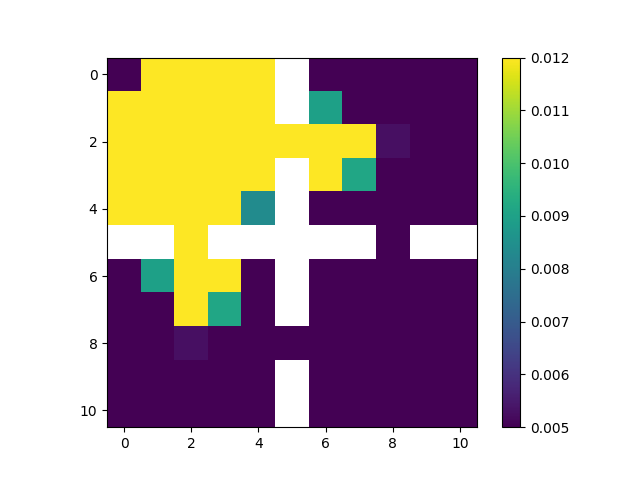}
         \caption{$n = 10, k=10$}
     \end{subfigure}
     \hfill
     \begin{subfigure}[h]{0.3\textwidth}
         \centering
         \includegraphics[width=\textwidth]{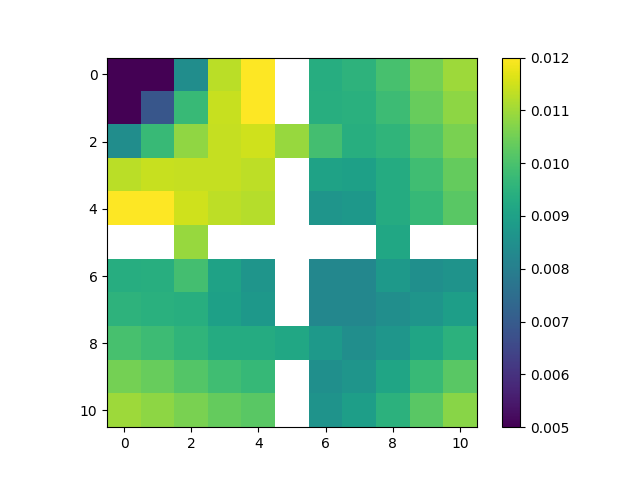}
         \caption{$n = 40, k=10$}
     \end{subfigure}
     \hfill
     \begin{subfigure}[h]{0.3\textwidth}
         \centering
         \includegraphics[width=\textwidth]{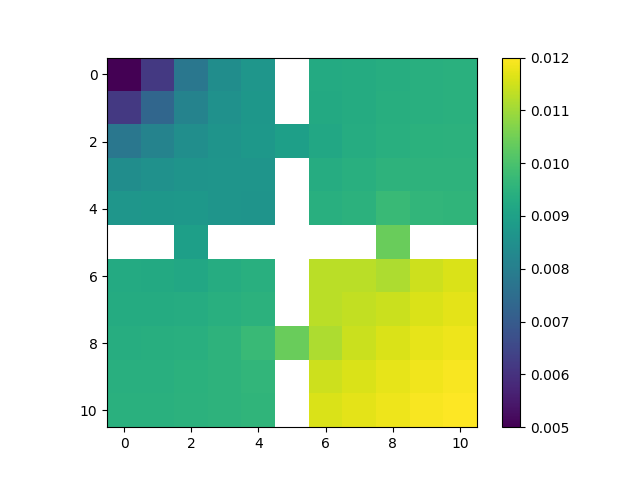}
         \caption{$n = 100, k=10$}
     \end{subfigure}
          \begin{subfigure}[h]{0.3\textwidth}
         \centering
         \includegraphics[width=\textwidth]{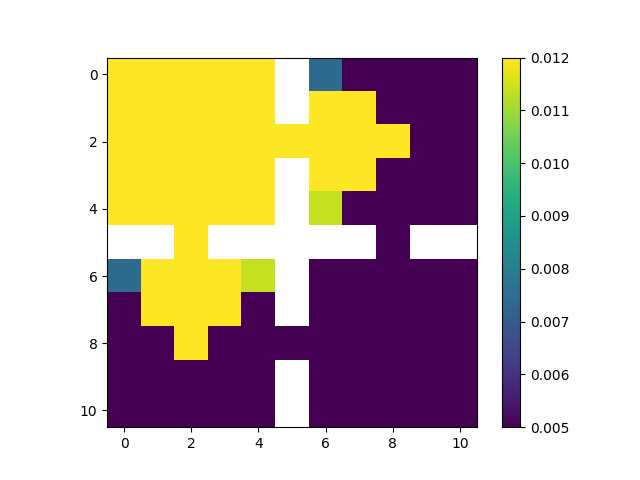}
         \caption{$n = 10, k=50$}
     \end{subfigure}
     \hfill
     \begin{subfigure}[h]{0.3\textwidth}
         \centering
         \includegraphics[width=\textwidth]{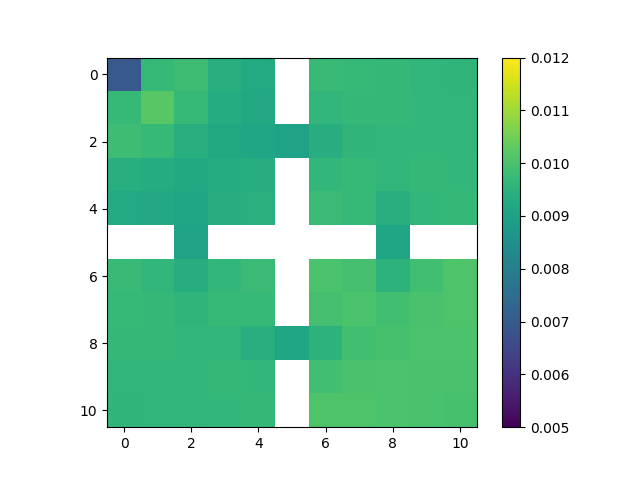}
         \caption{$n = 40, k=50$}
     \end{subfigure}
     \hfill
     \begin{subfigure}[h]{0.3\textwidth}
         \centering
         \includegraphics[width=\textwidth]{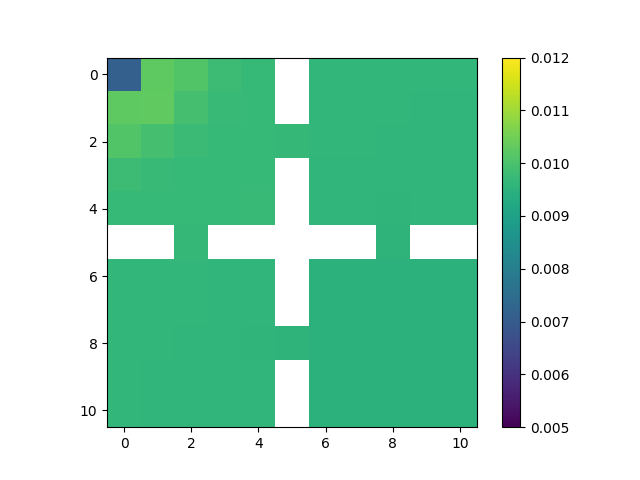}
         \caption{$n = 100, k=50$}
     \end{subfigure}
             \begin{subfigure}[h]{0.3\textwidth}
         \centering
         \includegraphics[width=\textwidth]{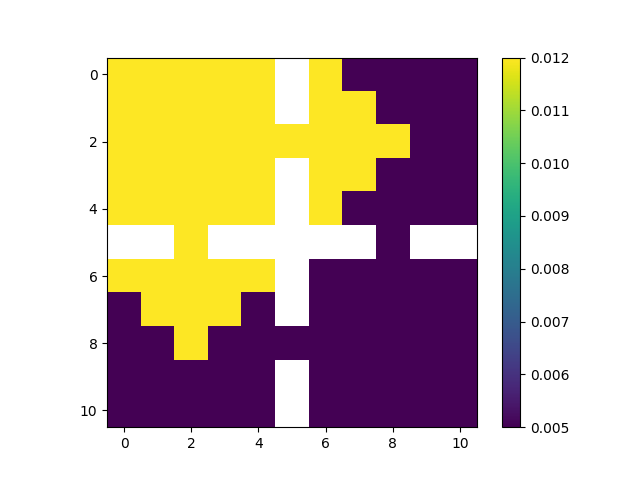}
         \caption{$n = 10, k=500$}
     \end{subfigure}
     \hfill
     \begin{subfigure}[h]{0.3\textwidth}
         \centering
         \includegraphics[width=\textwidth]{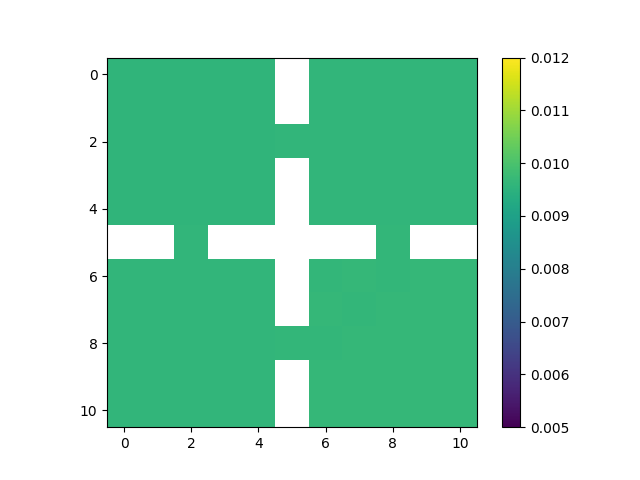}
         \caption{$n = 40, k=500$}
     \end{subfigure}
     \hfill
     \begin{subfigure}[h]{0.3\textwidth}
         \centering
         \includegraphics[width=\textwidth]{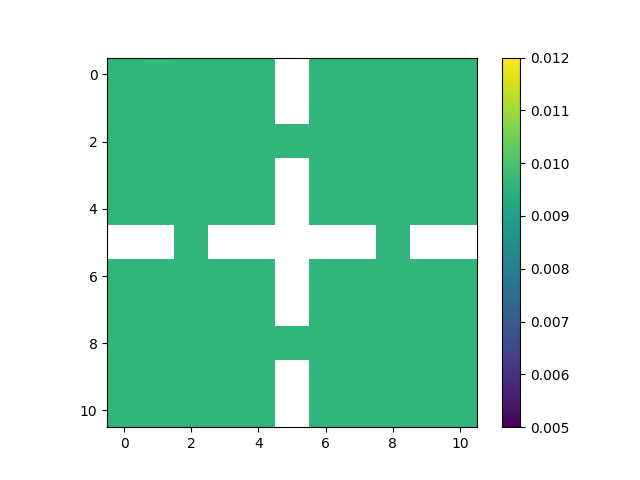}
         \caption{$n = 100, k=500$}
     \end{subfigure}
     \caption{State distribution of MD-CURL applied to Entropy Maximisation for steps $n \in \{10, 40, 100\}$ and iterations $k \in \{10, 50, 500\}$.}
     \label{MD-CURL_maxentropy_knownp}
\end{figure}

\begin{figure}[H]
     \centering
     \begin{subfigure}[h]{0.3\textwidth}
         \centering
         \includegraphics[width=\textwidth]{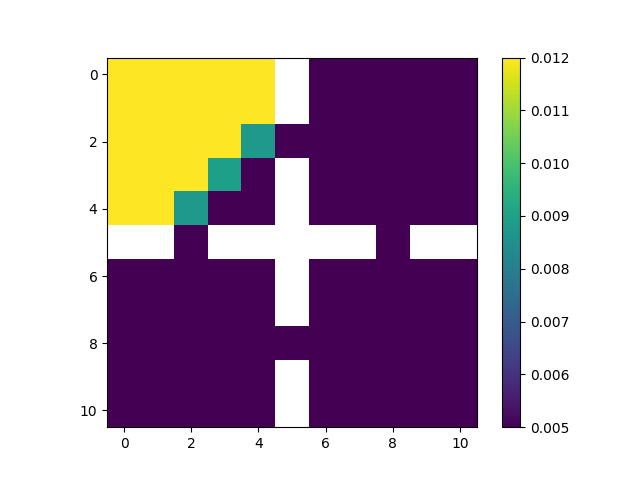}
         \caption{$n = 10, k=10$}
     \end{subfigure}
     \hfill
     \begin{subfigure}[h]{0.3\textwidth}
         \centering
         \includegraphics[width=\textwidth]{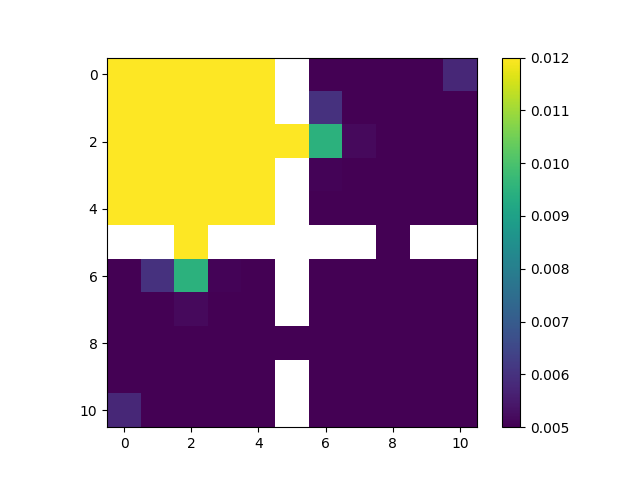}
         \caption{$n = 40, k=10$}
     \end{subfigure}
     \hfill
     \begin{subfigure}[h]{0.3\textwidth}
         \centering
         \includegraphics[width=\textwidth]{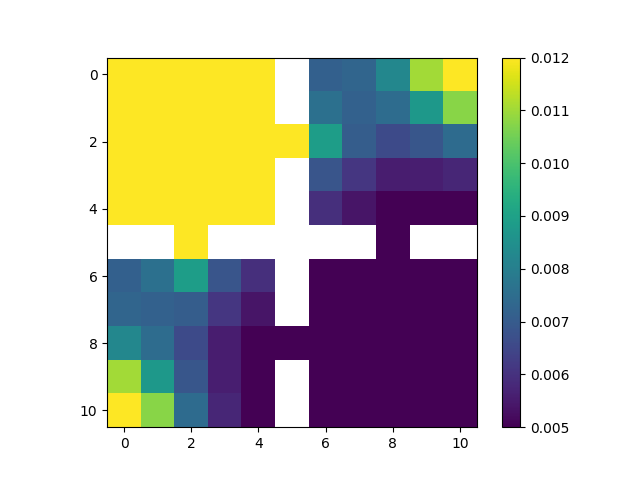}
         \caption{$n = 100, k=10$}
     \end{subfigure}
          \begin{subfigure}[h]{0.3\textwidth}
         \centering
         \includegraphics[width=\textwidth]{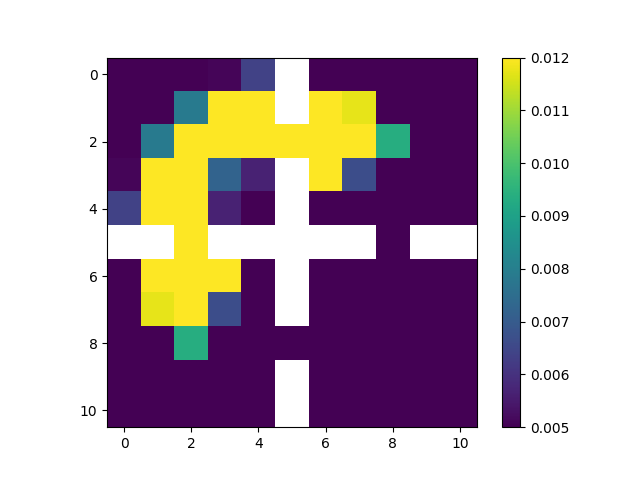}
         \caption{$n = 10, k=50$}
     \end{subfigure}
     \hfill
     \begin{subfigure}[h]{0.3\textwidth}
         \centering
         \includegraphics[width=\textwidth]{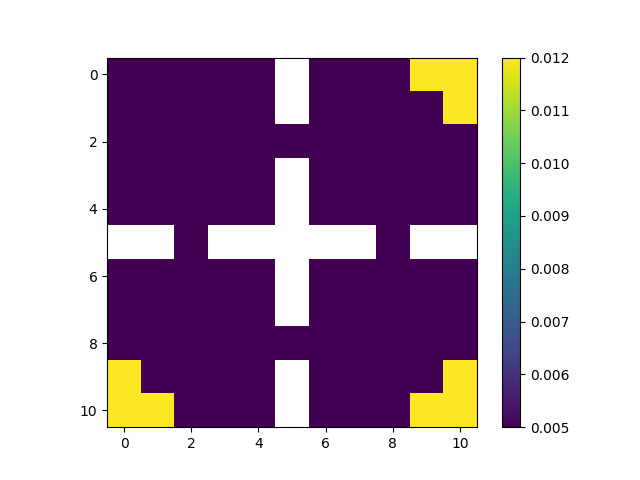}
         \caption{$n = 40, k=50$}
     \end{subfigure}
     \hfill
     \begin{subfigure}[h]{0.3\textwidth}
         \centering
         \includegraphics[width=\textwidth]{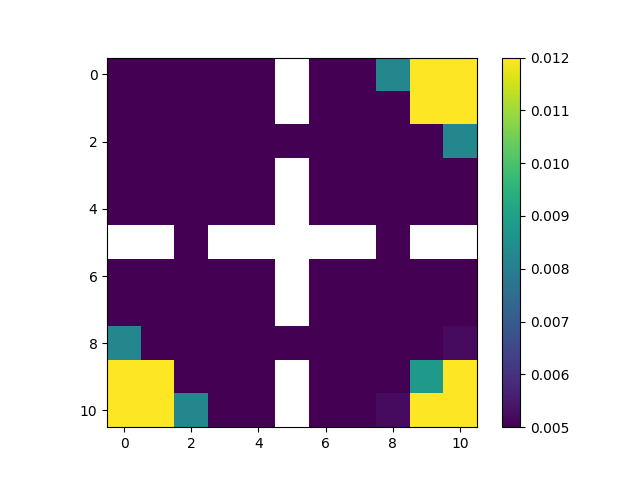}
         \caption{$n = 100, k=50$}
     \end{subfigure}
             \begin{subfigure}[h]{0.3\textwidth}
         \centering
         \includegraphics[width=\textwidth]{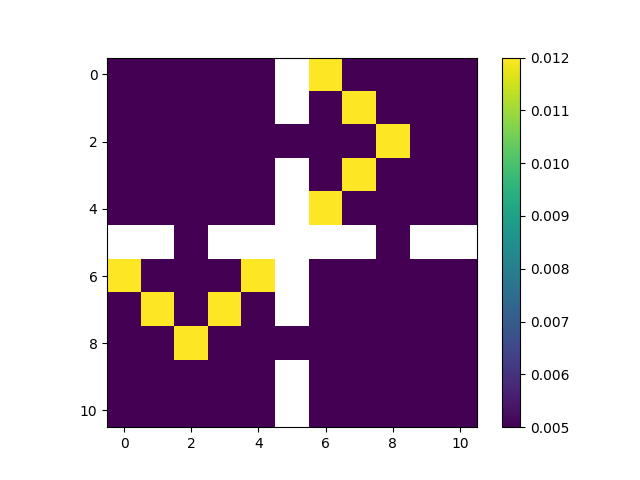}
         \caption{$n = 10, k=500$}
     \end{subfigure}
     \hfill
     \begin{subfigure}[h]{0.3\textwidth}
         \centering
         \includegraphics[width=\textwidth]{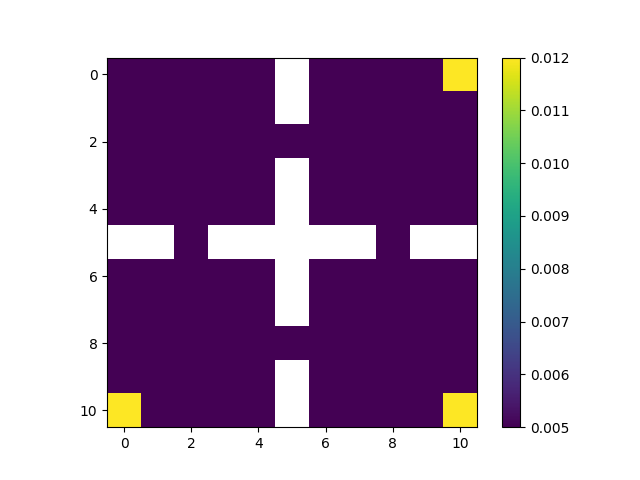}
         \caption{$n = 40, k=500$}
     \end{subfigure}
     \hfill
     \begin{subfigure}[h]{0.3\textwidth}
         \centering
         \includegraphics[width=\textwidth]{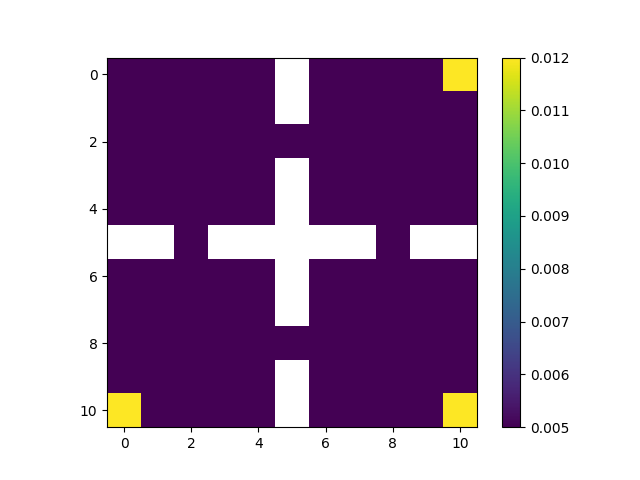}
         \caption{$n = 100, k=500$}
     \end{subfigure}
     \caption{State distribution of MD-CURL applied to Multi-Objectives for steps $n \in \{10, 40, 100\}$ and iterations $k \in \{10, 50, 500\}$.}
     \label{MD-CURL_multiobj_knownp}
\end{figure}

\subsection{Greedy MD-CURL with completely unknown probability kernel}

In this section, we present the state distribution induced by the policies computed with Greedy MD-CURL in the online learning scenario. We assume that both $g_n$ and $h_n$ are unknown, and we estimate the probability kernel $\hat{p}^t$ using Equation~\eqref{proba_counts} at each episode. We vary the steps $n$ and episodes $t$, and fix the episode length to $N=100$ for all experiments.

We illustrate the Entropy Maximization problem in Figure~\ref{fig:max_entropy_iter500_greedy} and the Multi-Objective problem in Figure~\ref{fig:multiobj_iter500_greedy} with a central noise of a probability of $0.2$. These results show that even when the full dynamics are unknown, Greedy MD-CURL can still achieve the main goal.

\begin{figure}[H]
     \centering
     \begin{subfigure}[h]{0.3\textwidth}
         \centering
         \includegraphics[width=\textwidth]{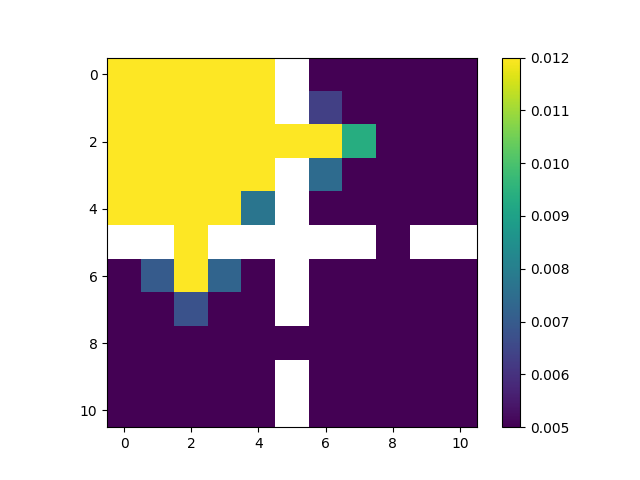}
         \caption{$n = 10, t=10$}
     \end{subfigure}
     \hfill
     \begin{subfigure}[h]{0.3\textwidth}
         \centering
         \includegraphics[width=\textwidth]{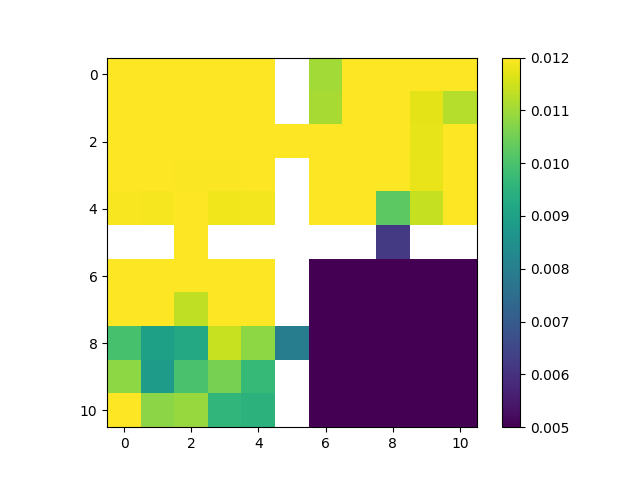}
         \caption{$n = 40, t=10$}
     \end{subfigure}
     \hfill
     \begin{subfigure}[h]{0.3\textwidth}
         \centering
         \includegraphics[width=\textwidth]{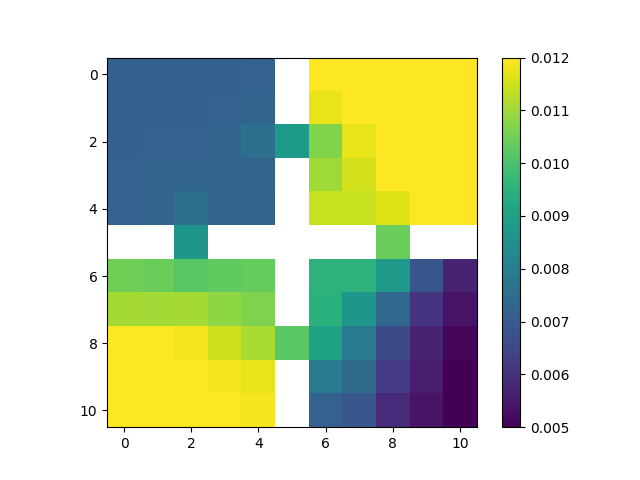}
         \caption{$n = 100, t=10$}
     \end{subfigure}
     \begin{subfigure}[h]{0.3\textwidth}
         \centering
         \includegraphics[width=\textwidth]{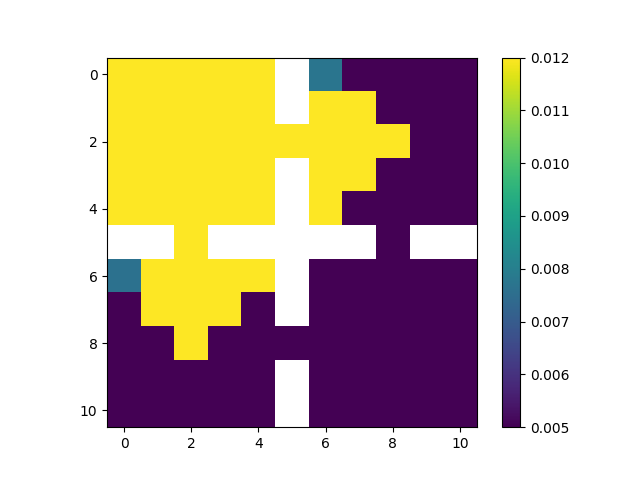}
         \caption{$n = 10, t=50$}
     \end{subfigure}
     \hfill
     \begin{subfigure}[h]{0.3\textwidth}
         \centering
         \includegraphics[width=\textwidth]{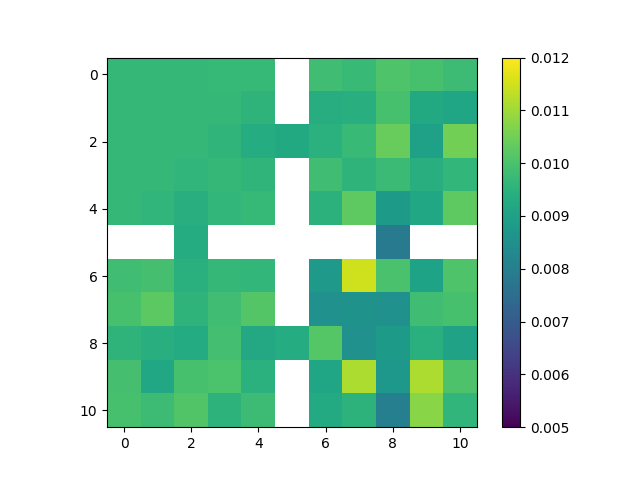}
         \caption{$n = 40, t=50$}
     \end{subfigure}
     \hfill
     \begin{subfigure}[h]{0.3\textwidth}
         \centering
         \includegraphics[width=\textwidth]{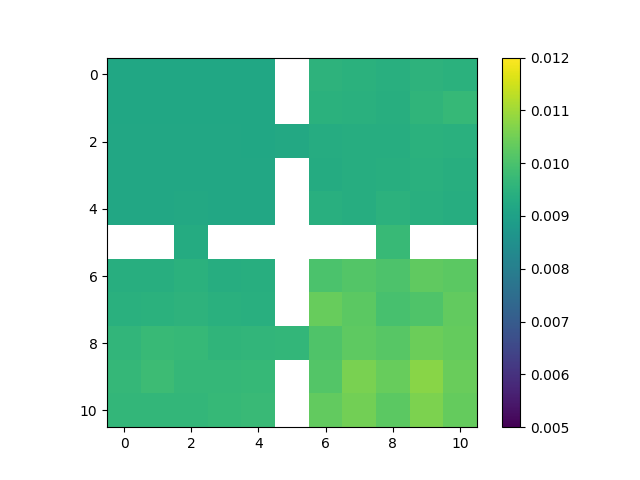}
         \caption{$n = 100, t=50$}
     \end{subfigure}
     \begin{subfigure}[h]{0.3\textwidth}
         \centering
         \includegraphics[width=\textwidth]{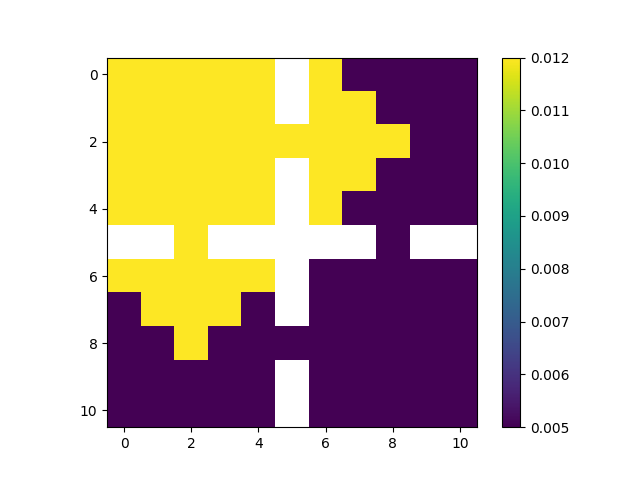}
         \caption{$n = 10, t=500$}
     \end{subfigure}
     \hfill
     \begin{subfigure}[h]{0.3\textwidth}
         \centering
         \includegraphics[width=\textwidth]{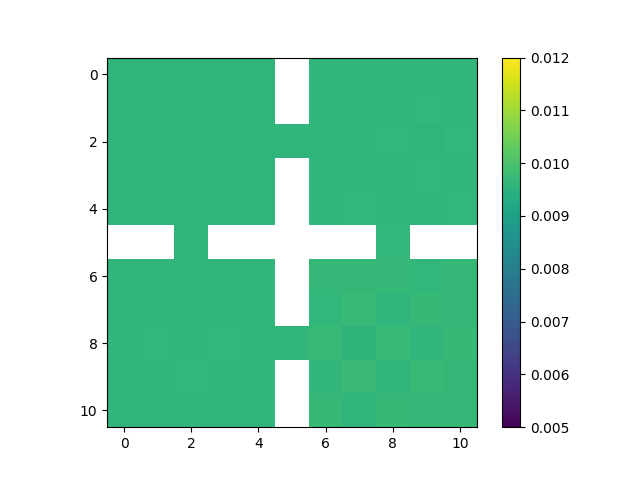}
         \caption{$n = 40, t=500$}
     \end{subfigure}
     \hfill
     \begin{subfigure}[h]{0.3\textwidth}
         \centering
         \includegraphics[width=\textwidth]{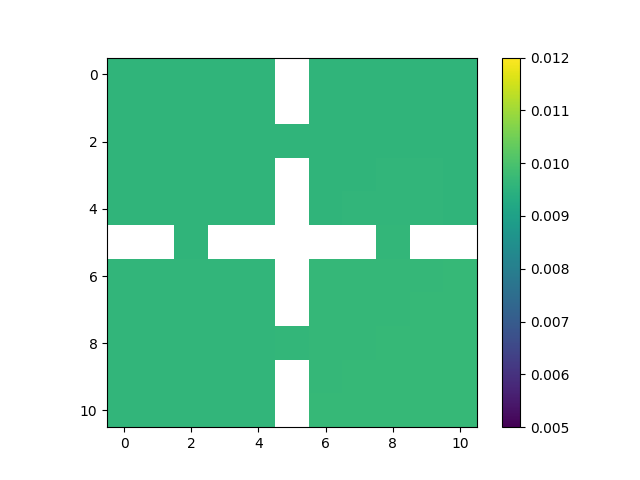}
         \caption{$n = 100, t=500$}
     \end{subfigure}
        \caption{State distribution of Greedy MD-CURL applied to Entropy Maximisation for steps $n \in \{10, 40, 100\}$ and episodes $t \in \{10, 50, 500\}$.}
        \label{fig:max_entropy_iter500_greedy}
\end{figure}

\begin{figure}[H]
     \centering
     \begin{subfigure}[h]{0.3\textwidth}
         \centering
         \includegraphics[width=\textwidth]{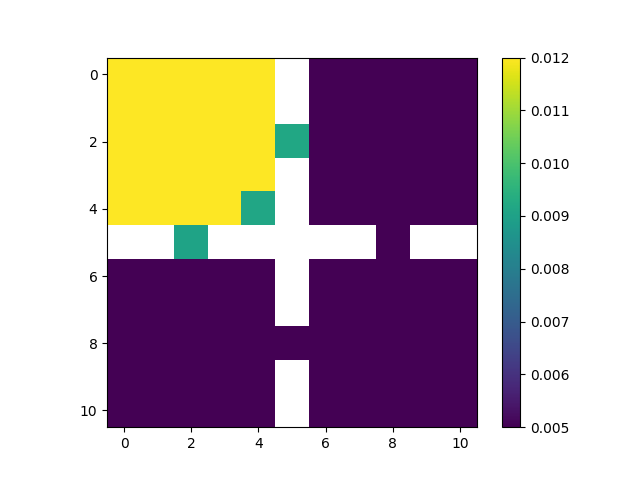}
         \caption{$n = 10, t=10$}
     \end{subfigure}
     \hfill
     \begin{subfigure}[h]{0.3\textwidth}
         \centering
         \includegraphics[width=\textwidth]{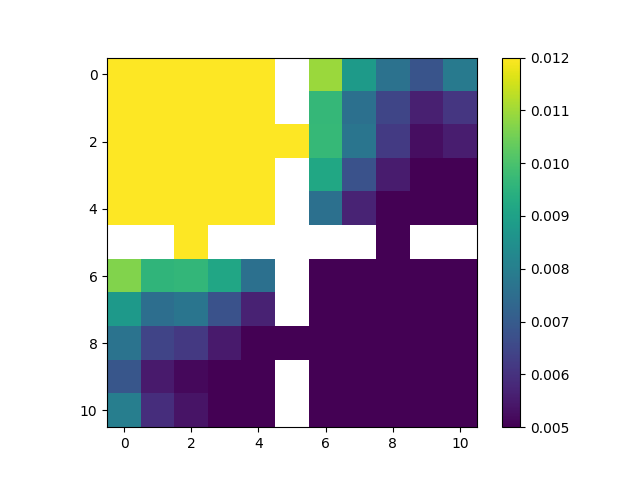}
         \caption{$n = 40, t=10$}
     \end{subfigure}
     \hfill
     \begin{subfigure}[h]{0.3\textwidth}
         \centering
         \includegraphics[width=\textwidth]{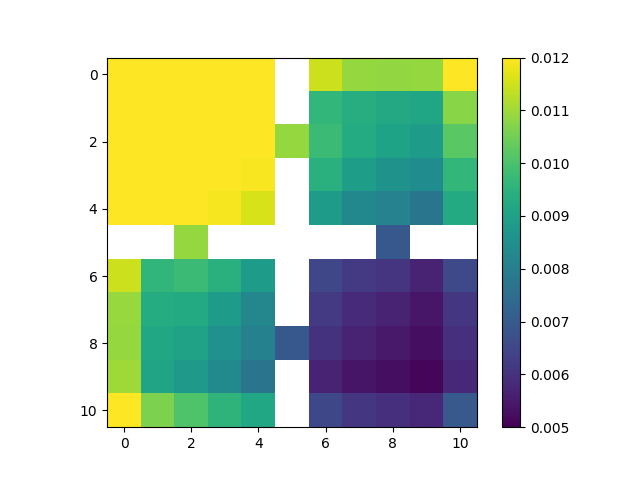}
         \caption{$n = 100, t=10$}
     \end{subfigure}
     \begin{subfigure}[h]{0.3\textwidth}
         \centering
         \includegraphics[width=\textwidth]{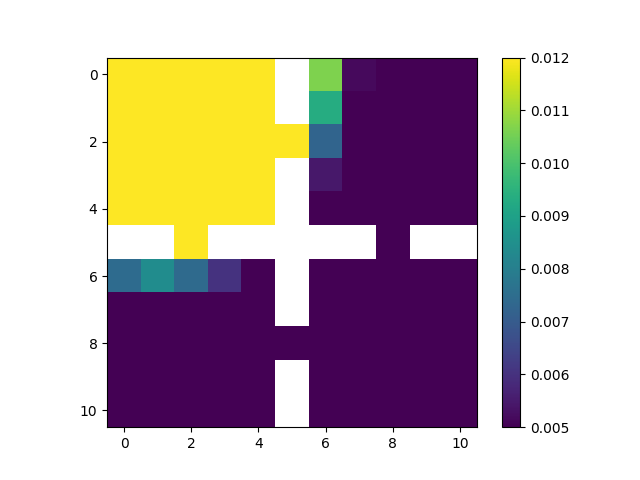}
         \caption{$n = 10, t=50$}
     \end{subfigure}
     \hfill
     \begin{subfigure}[h]{0.3\textwidth}
         \centering
         \includegraphics[width=\textwidth]{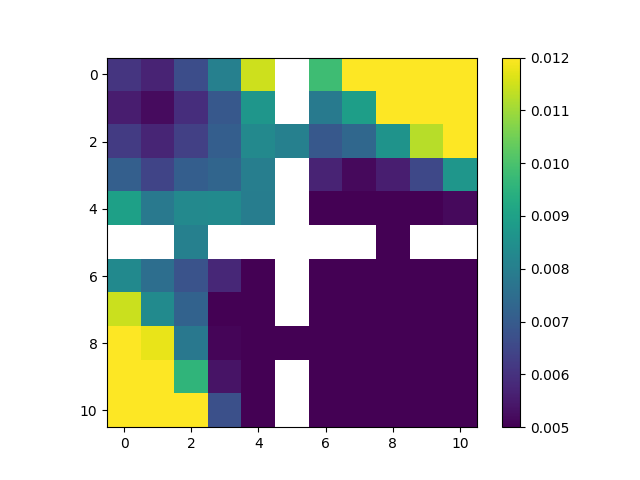}
         \caption{$n = 40, t=50$}
     \end{subfigure}
     \hfill
     \begin{subfigure}[h]{0.3\textwidth}
         \centering
         \includegraphics[width=\textwidth]{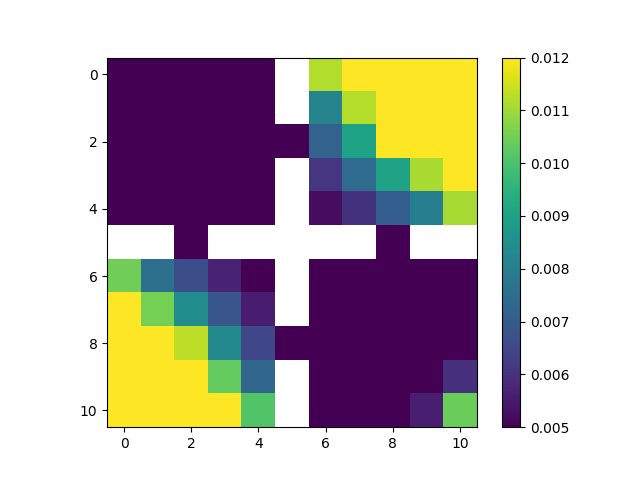}
         \caption{$n = 100, t=50$}
     \end{subfigure}
     \begin{subfigure}[h]{0.3\textwidth}
         \centering
         \includegraphics[width=\textwidth]{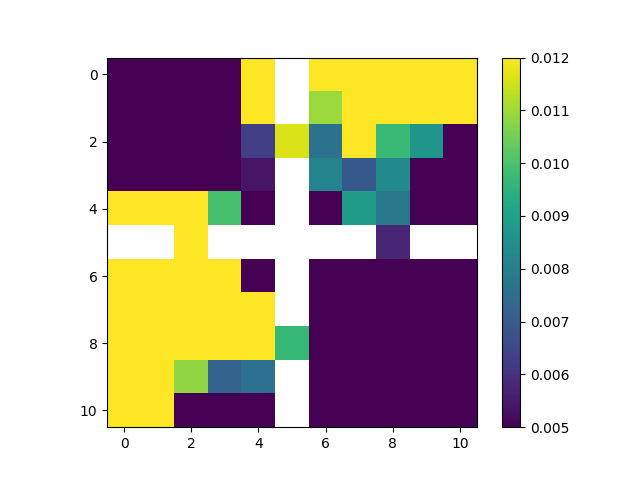}
         \caption{$n = 10, t=500$}
     \end{subfigure}
     \hfill
     \begin{subfigure}[h]{0.3\textwidth}
         \centering
         \includegraphics[width=\textwidth]{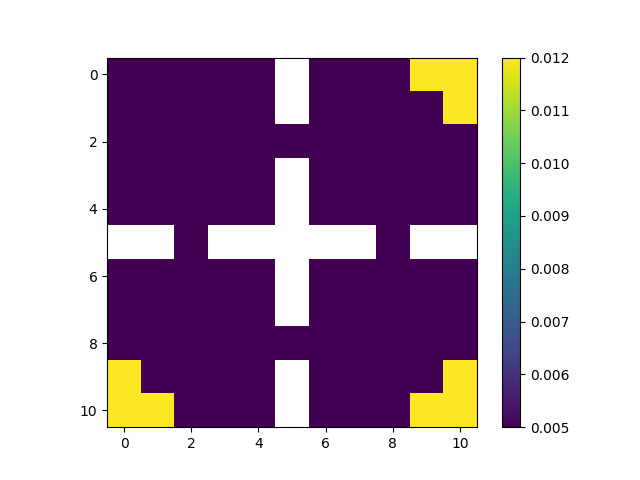}
         \caption{$n = 40, t=500$}
     \end{subfigure}
     \hfill
     \begin{subfigure}[h]{0.3\textwidth}
         \centering
         \includegraphics[width=\textwidth]{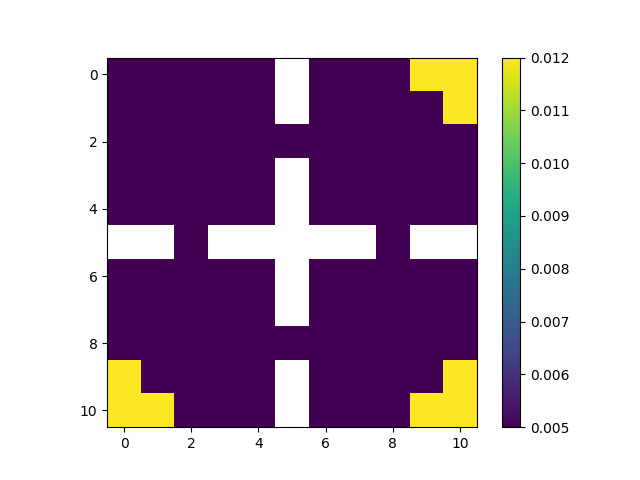}
         \caption{$n = 100, t=500$}
     \end{subfigure}
        \caption{State distribution of Greedy MD-CURL applied to Multi-Objectives for steps $n \in \{10, 40, 100\}$ and episodes $t \in \{10, 50, 500\}$.}
        \label{fig:multiobj_iter500_greedy}
\end{figure}
\vfill

\end{document}